\documentclass[11pt]{amsart}

\usepackage{amsmath,amssymb,amsthm,mathtools}
\usepackage{mathrsfs}
\usepackage[margin=1.15in]{geometry}
\usepackage{enumitem}
\usepackage{hyperref}

\hypersetup{
    colorlinks=true,
    linkcolor=blue,
    citecolor=blue,
    urlcolor=blue,
    pdftitle={
        Countable IET Models and Defect Sets for Interval Translation Maps
    },
    pdfauthor={
        Sergey Kryzhevich, Khosro Tajbakhsh, and Reza Yaghmaeian
    },
    pdfsubject={
        Interval translation maps, finite and countable interval exchange
        models, defect measures, and self-similar infinite-type dynamics
    },
    pdfkeywords={
        interval translation map,
        interval exchange transformation,
        countable IET,
        invariant measure,
        metric equivalence,
        defect measure,
        substitution dynamics,
        graph-directed attractor
    }
}

\theoremstyle{plain}
\newtheorem{theorem}{Theorem}[section]
\newtheorem{lemma}[theorem]{Lemma}
\newtheorem{proposition}[theorem]{Proposition}
\newtheorem{corollary}[theorem]{Corollary}

\theoremstyle{definition}
\newtheorem{definition}[theorem]{Definition}
\newtheorem{question}[theorem]{Question}
\newtheorem{example}[theorem]{Example}

\theoremstyle{remark}
\newtheorem{remark}[theorem]{Remark}

\DeclareMathOperator{\supp}{supp}
\DeclareMathOperator{\Leb}{Leb}

\newcommand{\cP}{\mathcal P}
\newcommand{\cH}{\mathcal H}

\newcommand{\cC}{\mathcal C}

\newcommand{\borel}{\mathcal B}

\title[Countable IET models for interval translation maps]
{Countable IET Models and Defect Sets for Interval Translation Maps}

\author[S. Kryzhevich]{Sergey Kryzhevich}
\address{
Institute of Applied Mathematics,
Faculty of Applied Physics and Mathematics,
Gdańsk University of Technology,
80--233 Gdańsk, Poland
}
\email{serkryzh@pg.edu.pl}

\author[K. Tajbakhsh]{Khosro Tajbakhsh $^{*}$}
\thanks{$^{*}$ Corresponding author}
\address{
Department of Mathematics,
Faculty of Mathematical Sciences,
Tarbiat Modares University,
Tehran 14115--134, Iran
}
\email{khtajbakhsh@modares.ac.ir}

\author[R. Yaghmaeian]{Reza Yaghmaeian}
\address{
Department of Mathematics,
Faculty of Mathematical Sciences,
Tarbiat Modares University,
Tehran 14115--134, Iran
}
\email{reza.yaghmaeian@modares.ac.ir}

\subjclass[2020]{
Primary 37E05;
Secondary 37A05, 37B10, 28A80
}

\keywords{
interval translation map,
interval exchange transformation,
countable interval exchange transformation,
metric equivalence,
invariant measure,
defect measure,
substitution dynamics,
graph-directed attractor
}

\date{\today}

\begin{document}

\begin{abstract}
Interval translation maps are piecewise translations for which the images of distinct
continuity intervals may overlap. We study when their measured dynamics can be represented
by finite or countable interval exchange transformations (IETs).

First, we give a direct entropy-based proof of the previously known fact that an interval
translation map is invertible almost everywhere with respect to every non-atomic invariant
probability measure. The proof is based on a polynomial upper bound for the complexity of the
natural branch coding. We then isolate a general consequence of the theorem of Arnoux,
Ornstein and Weiss: every non-atomic measure-preserving automorphism of a standard
probability space has a countable IET model. Applied to the almost-everywhere invertible
core, this yields an abstract countable IET model for every measured interval translation
map under consideration.

We next study the canonical, order-preserving coordinate defined by the distribution function
of the invariant measure. For each branch we introduce a positive defect measure that records
the excess measure carried by the direct image under the branch translation. Its push-forward
to the distribution coordinate gives a canonical sufficient cut set: away from this set the
induced map is locally a translation. Finite defect support yields a finite IET model, while a
Lebesgue-null canonical defect cut set yields a countable IET model. Under branchwise nonsingularity, the
defect support is the closure of the cuts arising from active gaps of the support, giving an
equivalent geometric description of the finite case. Finally, for a self-similar infinite-type
Bruin--Troubetzkoy map, we compute the canonical defect cut set explicitly. It is countably
infinite and Lebesgue-null, so the ordinary distribution coordinate yields a genuinely
countable, non-finite IET model.
\end{abstract}

\maketitle

\section{Introduction}

Interval exchange transformations (IETs) are bijective piecewise translations of an interval.
Interval translation maps are obtained by dropping the global injectivity requirement, so the
images of different continuity intervals may overlap. They therefore form a natural
non-invertible extension of IETs.

Let \(S\) be an interval translation map and let \(\mu\) be a non-atomic
\(S\)-invariant Borel probability measure. Such measures are available in the setting
considered by Kryzhevich \cite{Kryzhevich2020}. Writing
\[
        J=\supp\mu,
\]
we ask whether the measured system \((J,\mu,S)\) can be represented by an IET.

Foundational studies of interval translation maps and their finite-type and
infinite-type behavior include the works of Boshernitzan--Kornfeld and
Schmeling--Troubetzkoy
\cite{BoshernitzanKornfeld1995,SchmelingTroubetzkoy2000}.
Several later results address representation questions more directly. Pires obtained a
topological semiconjugacy to an IET, possibly with flips, for injective
piecewise continuous maps under additional assumptions \cite{Pires2016}. In
\cite[Theorem~3.10]{Kryzhevich2020}, Kryzhevich asserted that an interval translation
map endowed with a non-atomic invariant measure is metrically equivalent to an interval
exchange map via the ordinary distribution coordinate. The published proof relies on the
direct-image equality established there in Lemma~3.9,
\[
        \mu(SB)=\mu(B),
\]
for an interval \(B\) contained in one continuity branch. Invariance, however, controls
full preimages and does not by itself imply this equality for direct images of arbitrary
geometric branch intervals: such intervals may contain gaps of the support, or more general
null subsets, whose branch images carry positive measure. Thus the issue is with this step in
the published argument; we do not claim here that the metric-equivalence conclusion of
\cite{Kryzhevich2020} is false as an abstract measurable statement. Corollary~\ref{cor:universal-countable-iem}
recovers an abstract countable-IET model without using the distribution coordinate, while
Sections~5--7 give explicit sufficient conditions under which the canonical distribution
coordinate itself yields a finite or countable IET. Almost-everywhere invertibility justifies
direct-image invariance on an invariant full-measure invertible core, but not automatically on
the entire geometric interval.

There are therefore two distinct representation problems. The first is abstract and
measure-theoretic: does there exist some measurable coordinate in which the system becomes an
IET? The second is canonical and geometric: does the ordinary
distribution coordinate of \(\mu\) provide such a model while preserving the order inherited
from the original interval? The answers play complementary roles.

We begin with the abstract problem. Almost-everywhere invertibility for interval translation
maps was proved by Tajbakhsh and Yaghmaeian \cite{TajbakhshYaghmaeian2025}. Here we give a
different, self-contained proof. The natural branch coding has polynomial complexity, and
hence zero entropy. A conditional-entropy argument then shows that the present branch is
determined almost everywhere by the image point, which produces a measurable inverse on a
full-measure invariant core. The proof is consistent with Buzzi's zero-entropy theorem for
piecewise isometries \cite{Buzzi2001}, but does not require an identification of topological
entropy notions.

We next separate the interval-translation input from the general representation theorem.
Using the theorem of Arnoux, Ornstein and Weiss \cite{ArnouxOrnsteinWeiss1985}, we show that
every non-atomic measure-preserving automorphism of a standard probability space is
metrically equivalent to a countable IET. Applying this general
statement to the automorphic core gives an abstract countable IET model for
every measured interval translation map considered here. This conjugacy need not preserve the
original order and need not coincide with the distribution coordinate. The AOW consequence is
included primarily to separate the general measurable representation problem from the
canonical order-preserving problem; the underlying representation theorem is not presented as
new.

The main contributions developed here concern the canonical distribution coordinate. We
introduce positive branch-defect measures and use the supports of their push-forwards to define
a canonical defect cut set. We relate these supports to active gaps, prove finite- and
Lebesgue-null support criteria for finite and countable IET models, and carry out a complete
defect computation for a stationary Bruin--Troubetzkoy example. The entropy argument in
Section~3 is an independent, self-contained proof of the previously known
almost-everywhere-invertibility conclusion; it is included to make the paper logically
self-contained and to isolate the automorphic core used by the AOW theorem.

We then turn to the canonical model. Define
\[
        h(x)=\mu([0,x]).
\]
The map \(h\) sends \(\mu\) to Lebesgue measure and becomes a measure-space isomorphism
after suitable null sets are removed. In the \(h\)-coordinate, however, the displacement of a
branch need not remain constant. To measure this failure, we introduce on each branch a positive
defect measure recording the excess measure carried by the translated image. Its push-forward
under \(h\) has a support which forms a canonical sufficient cut set. Outside this set the induced
map is locally a translation. If the cut set is finite, the canonical model is a finite IET; if it
is Lebesgue-null, the canonical model is a countable IET.

The same obstruction has a geometric description. A gap of the support is called active when
its image under the corresponding branch translation has positive measure. Under a
branchwise nonsingularity assumption, the defect support is exactly the closure of the active-gap
cuts. In particular, finite defect support is equivalent to branchwise nonsingularity together
with finiteness of the active-gap cut set. These are sufficient criteria for the canonical
finite/countable realizations; no converse is claimed in the null, non-finite case.

The principal conclusions may be summarized as follows. The notation
\(C_{\mathrm{def}}\) refers to the canonical defect cut set introduced in
Section~7.

\begin{theorem}[Main results]
\label{thm:introduction-main-results}
Let \(S:I\to I\) be an interval translation map and let \(\mu\) be a
non-atomic \(S\)-invariant Borel probability measure. Put
\(J=\supp\mu\), let
\[
        h(x)=\mu([0,x])
\]
be the distribution coordinate, and let \(T=h\circ S\circ h^{-1}\) denote
the induced map modulo the full-measure sets on which this expression is
defined.
\begin{enumerate}[label=\textup{(\roman*)}]
\item If \(C_{\mathrm{def}}\) is finite, then the canonical
      distribution-coordinate model is a finite interval exchange
      transformation modulo endpoints.
\item If
      \[
              \Leb(C_{\mathrm{def}})=0,
      \]
      then the canonical distribution-coordinate model is a countable
      interval exchange transformation modulo Lebesgue-null sets.
\item For the stationary Bruin--Troubetzkoy map studied in
      Section~8.1, let \(\alpha\in(0,1/2)\) be the root of
      \(x^3-x^2-2x+1=0\), put \(\beta=\alpha^2\) and
      \(q=(1-\alpha)^2\), and define
      \[
              u_r=\alpha(1-q^{r+1}),
              \qquad
              v_r=2\alpha-\beta-(\alpha-\beta)q^{r+1}.
      \]
      Then
      \[
              C_{\mathrm{def}}
              =
              \{0,1,\alpha,2\alpha-\beta\}
              \cup\{u_r:r\geq0\}
              \cup\{v_r:r\geq0\}.
      \]
      This set is countably infinite and Lebesgue-null, and every
      active-gap cut in the two displayed sequences is essential.
      Consequently, the canonical model is a countable IET that cannot
      be represented by finitely many translation intervals in the
      distribution coordinate.
\end{enumerate}
\end{theorem}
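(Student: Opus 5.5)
This theorem collects results proved in later sections, so the plan is to assemble three separate arguments, one per item. Throughout, I work with the branch-defect measures $\delta_i$ and their push-forwards $h_*\delta_i$; the set $C_{\mathrm{def}}$ is the closure of the union of their supports, together with the images under $h$ of the branch endpoints.

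For (i) and (ii) the key local statement is the sufficiency of the cut set. Let $U$ be a connected component of $[0,1]\setminus C_{\mathrm{def}}$. Then $h^{-1}(U)$ meets $J$ inside a single branch $I_i$ with translation $t_i$, and no defect mass lies over it. I claim that for $a<b$ in $h^{-1}(U)\cap J$,
\[
\mu(S[a,b]) = \mu([a,b]).
\]
To prove this, use invariance, $\mu(S^{-1}E)=\mu(E)$, together with the a.e.-invertible core from Section~3. Any excess $\mu(S[a,b]) - \mu([a,b])$ is exactly $\delta_i([a,b])$, and this vanishes because $U$ misses $\supp h_*\delta_i$. It follows that $h(Sx)-h(x)$ is constant on $h^{-1}(U)\cap J$, so $T$ is a translation on $U$.

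The image pieces have disjoint interiors up to null sets, since $T$ preserves Lebesgue measure and is a.e.\ injective on the core. Hence $T$ is a translation on each complementary component.
\begin{itemize}
\item If $C_{\mathrm{def}}$ is finite, there are finitely many components, giving a finite IET modulo endpoints.
\item If $\Leb(C_{\mathrm{def}})=0$, the complement is an open set of full measure, a countable union of intervals. This gives a countable IET modulo null sets.
\end{itemize}
The main subtlety in this part is that $h$ collapses gaps of $J$. I will therefore check that constancy of the displacement passes across collapsed gaps that carry no defect, using that an inactive gap has null image.

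For (iii) I take the explicit substitution description of the stationary Bruin--Troubetzkoy map from Section~8.1. The steps are:
\begin{enumerate}
\item Describe $J$ as a graph-directed attractor with contraction ratio $q=(1-\alpha)^2$, and compute $\mu$ as the self-similar measure coming from the Perron--Frobenius eigenvector. The cubic $x^3-x^2-2x+1$ appears as the characteristic relation.
\item List the gaps of $J$ within each branch. Determine which gaps are active, meaning their translated image has positive measure; self-similarity reduces this to finitely many gap types.
\item Establish branchwise nonsingularity, so that the Section~7 characterization applies: the defect support is the closure of the active-gap cuts.
\item Compute $h$ at the active gaps by summing geometric series. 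This should give the sequences $u_r=\alpha(1-q^{r+1})$ and $v_r$, which accumulate at $\alpha$ and $2\alpha-\beta$. Adding the branch-endpoint images $0,1,\alpha,2\alpha-\beta$ yields the closure.
\end{enumerate}
The resulting set is countable with finitely many accumulation points, so it is Lebesgue-null, and item (ii) then gives a countable IET model.

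For essentiality, I show at each $u_r$ and $v_r$ that the two one-sided displacements of $T$ differ, by an amount equal to the positive defect mass of the gap. No finite partition into translation intervals can then exist in the distribution coordinate, since the number of displacement discontinuities is infinite.

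I expect the main obstacle to be step 2 of part (iii): verifying exactly which gaps are active across all scales, and getting the measure computations that yield $h$ at those gaps right.
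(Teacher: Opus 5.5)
Your treatment of (i) and (ii) is the paper's argument. Positivity of the branch defect (which is what invariance is really needed for), together with $\Delta_k(b)-\Delta_k(a)=\sigma_k((a,b))$, makes the displacement constant over each component $U$ of $[0,1]\setminus\cC_{\mathrm{def}}$. Lebesgue preservation of $T$ then makes the image intervals disjoint and covering modulo null sets. Two simplifications are available here:
\begin{enumerate}[label=\textup{(\alph*)}]
\item The invertible core is not needed to get $\mu(\tau_k[a,b])=\mu([a,b])$.
\item The collapsed-gap subtlety you flag is automatic. Every $h(t_j)$ lies in $\cC_{\mathrm{def}}$, so $h^{-1}(U)$ is an open interval inside one branch interior, and any gap $G\subset h^{-1}(U)$ has $\sigma_k(G)\le\rho_k(U)=0$.
\end{enumerate}
Your outline of (iii) also follows the paper's route: graph-directed attractor, active-gap classification, branchwise nonsingularity so that $\supp\rho_k=\overline{\cC_{\mathrm{gap},k}}$, explicit cuts, and one-sided jumps of $T(u)-u$ at each $u_r,v_r$. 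However, it contains a wrong ingredient and leaves the key measure-theoretic step without an idea.

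\textbf{Wrong contraction ratio.} The graph maps $L(x)=\alpha(x-1+\alpha)$, $M(x)=\alpha x+\beta$ and $\phi(x)=1-\alpha+\alpha x$ all have ratio $\alpha$. The number $q=(1-\alpha)^2$ is a ratio of \emph{masses}, not a geometric ratio. With $\alpha^d=1-\alpha$, each edge multiplies $\mu$ by $1-\alpha$. The two-step similarities $R_1=M\circ\phi$ and $R_2=\phi\circ M$ have geometric ratio $\beta$, fix $1-\alpha$ and $1-\beta$ respectively, and multiply $\mu$ by $\beta^d=q$. The cuts then come from tail identities such as $\widehat J_1\cap[a_r,1-\alpha]=R_1^{r+1}(\widehat J_1)$, where $a_r$ is the right endpoint of the $r$-th even active gap. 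This gives $h(A_{2r})=\alpha-q^{r+1}\mu(\widehat J_1)=u_r$. An attractor built with ratio $q$ would misplace the gaps.

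\textbf{Missing mechanism for nonsingularity.} You state step 3 with no mechanism, yet it is exactly where $\supp\rho_k\subset\overline{\cC_{\mathrm{gap},k}}$ could fail. The invertible core controls direct images only of subsets of the core. Nonsingularity must instead exclude a $\mu$-null Borel subset of $J$ being translated onto positive mass. The paper handles three things at once by proving $\mu=\mathcal H^d|_J/\mathcal H^d(J)$:
\begin{enumerate}[label=\textup{(\alph*)}]
\item branchwise nonsingularity;
\item the scaling $\mu(R_iE)=q\mu(E)$ for \emph{every} Borel $E$;
\item the identification of the given invariant measure with the Perron--Frobenius measure, which you simply assume.
\end{enumerate}
The ingredients are the strong open set condition with Mauldin--Williams, a Bratteli--Vershik intertwining for $S$-invariance, and unique ergodicity via the Bruin--Troubetzkoy coding. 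Since each $\tau_k$ is an isometry, it preserves $\mathcal H^d$-null sets, which gives nonsingularity.

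\textbf{Gap classification.} This also needs an actual mechanism rather than ``finitely many types''. The gaps are the tower levels $G_{m,j}=S^j(\phi^m(G_0))$ with $G_0=(\beta,\alpha)$, and they are exhausted because $(\alpha-\beta)\sum_m\ell_m\alpha^m=1$. A non-top level maps onto the next gap, so it is inactive. The top level $A_m$ maps onto an open interval containing $\phi^{m+1}(1-\alpha)\in J$, so it is active.
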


Parts~\textup{(i)} and~\textup{(ii)} are proved in
Theorem~\ref{thm:defect-set-structure}. Part~\textup{(iii)} follows from
Theorem~\ref{thm:BT-explicit-Cdef} and
Proposition~\ref{prop:BT-nonfinite-canonical}.

The finite-type and infinite-type regimes of interval translation maps have recently received
considerable attention. For three branches, Volk proved that finite-type maps form an open,
dense and full-measure subset of parameter space \cite{Volk2014}. In a recent preprint,
Drach, Staresinic and van Strien prove that, for every number of branches, finite-type maps
contain an open dense subset \cite{DrachStaresinicVanStrien2026}. Infinite-type families with
Cantor attractors exhibit genuinely different ergodic behavior, including weak mixing and
unique ergodicity phenomena \cite{BruinRadinger2026,SkripchenkoChernyi2026}. In particular,
Artigiani, Avila, Ferenczi, Hubert and Skripchenko prove typical weak mixing for the
Bruin--Troubetzkoy family and obtain related spectral results
\cite{ArtigianiAvilaFerencziHubertSkripchenko2026}. These classes provide natural test cases
for the canonical defect cut set introduced here. We carry out such a calculation for a
stationary self-similar Bruin--Troubetzkoy parameter: its canonical defect cut set consists of
two explicit geometric sequences together with their accumulation points. This gives a
canonical countable IET model which is provably non-finite.

The paper is organized as follows. Section~2 fixes the interval conventions and the notions
of finite and countable IET models. Section~3 proves the polynomial coding
bound and the almost-everywhere invertibility theorem, and then derives the abstract countable
model. Section~4 constructs the distribution-coordinate model. Section~5 introduces branch
displacements and a basic finite-piece criterion. Section~6 develops the active-gap criterion.
Section~7 introduces defect measures and proves the main finite and countable structure
theorems. Section~8 records basic examples, computes the canonical defect cut set for a
self-similar infinite-type Bruin--Troubetzkoy map, gives useful geometric reformulations,
and explains why almost-everywhere invertibility does not by itself justify the ordinary
direct-image argument. Section~9 collects open questions. The appendices contain,
respectively, the technical entropy and automorphic-core proofs, the graph-directed
Hausdorff-measure and Bratteli--Vershik details, and the complete renormalization-tower
bookkeeping.

\section{Interval translation maps and metric equivalence}

Throughout the paper, we work on the half-open interval
\[
I=[0,1).
\]
This convention avoids the extra bookkeeping caused by wrap-around in the
circle model. More precisely, each branch translation will be represented by
a real translation whose image remains inside \([0,1)\). The refinement lemma
below shows that this causes no loss of generality for circle interval
translation maps.

\begin{definition}[Interval translation map]
	Let
	\[
	0=t_0<t_1<\cdots<t_n=1,
	\]
	and define
	\[
	I_k=[t_{k-1},t_k),
	\qquad k=1,\ldots,n.
	\]
	A map \(S:I\to I\) is called an \emph{interval translation map} if, for each
	\(k\), there is a constant \(c_k\in\mathbb{R}\) such that
	\[
	S(x)=x+c_k,
	\qquad x\in I_k,
	\]
	with
	\[
	x+c_k\in[0,1)
	\qquad\text{for every }x\in I_k.
	\]
	The finite partition
	\[
	\cP=\{I_1,\ldots,I_n\}
	\]
	is called the \emph{continuity partition}, and
	\[
	\cH=\{t_0,t_1,\ldots,t_n\}
	\]
	is called the \emph{endpoint set}.
\end{definition}

\begin{remark}
	Each interval \(I_k=[t_{k-1},t_k)\) in the continuity partition will be
	called a \emph{branch}, or a \emph{continuity branch}, of \(S\). The points
	\(t_0,\ldots,t_n\) will be called the branch endpoints or branch cuts. On
	each branch, the map is given by a single translation
	\[
	x\longmapsto x+c_k.
	\]
\end{remark}

\begin{remark}
	Although the dynamical system is defined on the half-open interval
	\(I=[0,1)\), it will sometimes be convenient to regard an invariant measure
	as a Borel probability measure on the compact interval \([0,1]\), assigning
	zero mass to the endpoint \(1\). Since all measures considered in this paper
	are non-atomic, this does not change the measured system modulo null sets.
	
	This convention allows us to define the distribution function on the whole
	compact interval by
	\[
	h(x)=\mu([0,x]),
	\qquad x\in[0,1],
	\]
	with \(h(1)=1\).
\end{remark}

The interval convention above also applies to maps originally defined on the
circle. If the image of a branch crosses the point \(0\), we simply divide
that branch at the corresponding wrap-around point.

\begin{lemma}[Finite refinement of circle branches]
	\label{lem:circle-refinement}
	Suppose that a circle interval translation map is defined on finitely many
	branches by
	\[
	S(x)=x+c_k\pmod 1.
	\]
	By cutting each branch, when necessary, at the preimage of the point where
	its image crosses \(0\), one obtains a finite refinement of the original
	partition with the following property: on every refined branch, \(S\) is
	represented by a real translation
	\[
	x\longmapsto x+d
	\]
	whose image lies in \([0,1)\).
	
	Only finitely many new endpoints are introduced. Consequently, for every
	non-atomic invariant measure, the original and refined systems agree modulo
	null sets.
\end{lemma}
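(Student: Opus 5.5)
We work branch by branch. Fix an original circle branch \(I_k=[t_{k-1},t_k)\) with \(S(x)=x+c_k \pmod 1\). Replacing \(c_k\) by its representative in \([0,1)\) changes nothing modulo \(1\). The function \(x\mapsto x+c_k\) is then continuous and strictly increasing on \(I_k\), with values in \([c_k,\,1+c_k)\), an interval of length less than \(2\). Since \(I_k\) has length at most \(1\), the real image \([t_{k-1}+c_k,\,t_k+c_k)\) contains at most one point congruent to \(0\) in its interior. That point can only be \(1\), because the image lies in \([0,2)\).

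\textbf{Case 1: the real image meets no integer.} Here the image lies in \([0,1)\) (up to the trivial case \(t_k+c_k\le 1\)). We keep the branch and take \(d=c_k\).

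\textbf{Case 2: the real image meets \(1\).} Put \(s=1-c_k\), which lies in \((t_{k-1},t_k)\) or equals \(t_{k-1}\). We cut \(I_k\) into \([t_{k-1},s)\) and \([s,t_k)\).
\begin{itemize}
\item On the left piece \(x+c_k<1\), so we take \(d=c_k\).
\item On the right piece \(1\le x+c_k<2\), so \(S(x)=x+c_k-1\in[0,1)\), and we take \(d=c_k-1\).
\end{itemize}
If \(s=t_{k-1}\), there is no left piece and we simply take \(d=c_k-1\). The point \(s\) is the preimage of the wrap-around point, so it is exactly the cut described in the statement. The half-open conventions fit: the left piece is \([t_{k-1},s)\) and the right piece starts at \(s\) with image starting at \(0\).

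Each original branch contributes at most one new endpoint, so the refined partition has at most \(2n\) pieces. Because \(S\) is defined pointwise and unchanged, the refined system is literally the same map \(S\). It is now presented on a finer partition on which every piece is an honest real translation into \([0,1)\). The only discrepancy between the two descriptions is therefore a finite set of endpoints, at which one might reassign conventions. A non-atomic invariant measure gives this finite set measure zero, and the same holds for its forward and backward orbits, which are countable. So the original and refined systems agree modulo \(\mu\)-null sets, and \(\mu\) remains invariant.

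The only step needing care is the count of at most one crossing per branch. It relies on the branch length being at most \(1\) together with the normalization \(c_k\in[0,1)\). If \(I_k\) were all of the circle with \(c_k\neq0\), the same argument still yields exactly one crossing, since the image then has length exactly \(1\) and is half-open. Everything else is bookkeeping.
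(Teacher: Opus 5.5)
Your route is the paper's: for each branch, locate the at most one point where the lifted translation $x\mapsto x+c_k$ reaches an integer, cut there, and use the shifts $c_k$ and $c_k-1$ on the two pieces. Only finitely many endpoints are added, and a non-atomic measure ignores them. Your normalization $c_k\in[0,1)$ is a small improvement, because it is what actually justifies the paper's claim that only the two shifts $c_k$ and $c_k-1$ occur.

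There is, however, a slip in Case~1. An image that meets no integer need not lie in $[0,1)$: with $c_k\in[0,1)$ it can lie entirely in $(1,2)$. For example, $I_k=[0.6,0.8)$ and $c_k=0.5$ give the image $[1.1,1.3)$. There your choice $d=c_k$ sends the branch outside $[0,1)$, so the claimed property fails for that branch.

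The fix is to split by the position of the image rather than by whether it meets an integer:
\begin{itemize}
\item if $t_k+c_k\le 1$, keep the branch with $d=c_k$;
\item if $t_{k-1}+c_k\ge 1$, keep it uncut with $d=c_k-1$; this extends your subcase $s=t_{k-1}$ to all $s\le t_{k-1}$;
\item otherwise $s=1-c_k\in(t_{k-1},t_k)$, and your cut applies.
\end{itemize}
With that correction the argument is complete.
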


\begin{proof}
	Fix one of the original branches \(I_k\). Consider the lifted translation
	\[
	x\longmapsto x+c_k.
	\]
	Either its image lies entirely inside one translate of \([0,1)\), or it
	crosses an integer boundary. Since the image of \(I_k\) is an interval of
	length at most \(1\), there is at most one point of \(I_k\) at which
	\(x+c_k\) is an integer.
	
	In the second case, we cut \(I_k\) at this point. The original branch is then
	divided into at most two subintervals. On each subinterval, the circle map is
	represented by an ordinary real translation, namely either
	\[
	x\longmapsto x+c_k
	\]
	or
	\[
	x\longmapsto x+c_k-1,
	\]
	and the image of that translation lies in \([0,1)\).
	
	Because the original partition has only finitely many branches, this
	procedure introduces only finitely many new endpoints. A non-atomic measure
	assigns zero mass to every finite set, so the refinement does not change the
	measured dynamics modulo null sets.
\end{proof}

\begin{remark}[Dynamical and compact attractors]
\label{rem:dynamical-compact-attractors}
For an interval translation map
\[
        S:I=[0,1)\longrightarrow I,
\]
we distinguish between the usual dynamical attractor
\[
        \Omega(S)
        :=
        \bigcap_{m\geq0}S^m(I)
\]
and its compact topological counterpart
\[
        J_{\mathrm{att}}(S)
        :=
        \bigcap_{m\geq0}\overline{S^m(I)}^{\,[0,1]}.
\]
Here the closures are taken in the compact interval \([0,1]\).

The set \(\Omega(S)\) is the intersection of the actual forward images
of the half-open dynamical interval \(I\). It need not be closed in
\([0,1]\). By contrast, \(J_{\mathrm{att}}(S)\) is compact, since it is
an intersection of nested nonempty compact subsets of \([0,1]\). We
always have
\[
        \Omega(S)\subset J_{\mathrm{att}}(S),
\]
but no equality between these two sets is assumed unless it is proved
in the particular situation under consideration.

In statements concerning the dynamics on \(I\), we use
\(\Omega(S)\). When endpoints, compact graph-directed constructions,
Hausdorff measure, or complementary gaps are involved, we use
\(J_{\mathrm{att}}(S)\). In Section~8.1 we retain the notation
\(J_{\mathrm{att}}\) for the compact attractor until it is identified with
the support of the invariant measure constructed there. Only after that identification do we
write
\[
        J=\supp\mu=J_{\mathrm{att}}(S),
\]
with
\[
        J_{\mathrm{dyn}}:=J_{\mathrm{att}}(S)\cap I
\]
for the actual dynamical part.
\end{remark}

\begin{definition}[Interval exchange transformation (IET)]
	An interval translation map \(T\) is called an \emph{interval exchange transformation (IET)}
	if it is one-to-one except possibly at branch endpoints. Equivalently, the
	images of its continuity intervals are pairwise disjoint modulo
	Lebesgue-null sets and cover the interval modulo endpoints.
\end{definition}

\begin{definition}[Countable interval exchange transformation (countable IET)]\label{def:countable-iem}
A measurable map $R:[0,1]\to[0,1]$ is a countable interval exchange transformation (countable IET)
modulo Lebesgue-null sets if there is a countable family of pairwise disjoint open
intervals $\{U_m\}_{m\geq1}$ whose union has full Lebesgue measure, and constants $d_m$,
such that $R(u)=u+d_m$ for almost every $u\in U_m$, while the translated intervals
$U_m+d_m$ are contained in $[0,1]$ up to endpoints, are pairwise disjoint modulo
endpoints, and cover $[0,1]$ modulo a null set.
\end{definition}

\begin{definition}[Metric equivalence]
	Let
	\[
	(X,\borel_X,\mu,S)
	\qquad\text{and}\qquad
	(Y,\borel_Y,\nu,T)
	\]
	be measure-preserving systems on standard Borel probability spaces. We say
	that they are \emph{metrically equivalent} if there exist invariant
	full-measure sets
	\[
	X_0\subset X,
	\qquad
	Y_0\subset Y,
	\]
	and a measurable bijection
	\[
	\Phi:X_0\longrightarrow Y_0
	\]
	with measurable inverse such that
	\[
	\Phi_*\mu=\nu
	\]
	and
	\[
	\Phi\circ S=T\circ\Phi
	\]
	on \(X_0\).
\end{definition}

Throughout the rest of the paper, \(\mu\) denotes a non-atomic
\(S\)-invariant Borel probability measure, and
\[
J=\operatorname{supp}\mu
\]
denotes its support, viewed as a closed subset of the compact interval
\([0,1]\). Consequently,
\[
[0,1]\setminus J
\]
is relatively open in \([0,1]\) and has at most countably many connected
components. Each component is a relatively open interval in \([0,1]\); it
may have the form
\[
(a,b),\qquad [0,b),\qquad\text{or}\qquad(a,1].
\]
We call these connected components the \emph{gaps} of \(J\).
\section{Almost-everywhere invertibility}\label{sec:ae-invertibility}

Almost-everywhere invertibility of interval translation maps with respect to non-atomic
invariant probability measures was previously proved by Tajbakhsh and Yaghmaeian
\cite{TajbakhshYaghmaeian2025}. We include here a different, self-contained proof based on
the complexity of the natural branch coding. Besides providing an independent argument, this
approach identifies explicitly the measurable inverse and the invariant full-measure
automorphic core that will be used in Subsection~3.1 and later in the analysis of direct branch
images.

The proof proceeds in three steps. First, we show that the number of length-$m$ branch
itineraries grows at most polynomially, and hence the entropy of the continuity partition is
zero. A conditional-entropy argument then shows that the present branch of a point is
determined almost everywhere by its future itinerary, equivalently by the image point together
with its future. This allows us to select the correct inverse translation branch measurably.
Finally, after modifying the maps only on null sets, we obtain an invariant full-measure subset
on which the original interval translation map is a measure-preserving automorphism.

This argument is consistent with Buzzi's zero-entropy theorem for piecewise isometries
\cite{Buzzi2001}, but it does not require an identification of different notions of topological
entropy. Let $\cP=\{I_1,\ldots,I_n\}$ be the continuity partition. For $m\geq1$, define
\[
        \cP^{(m)}=\bigvee_{j=0}^{m-1}S^{-j}\cP.
\]
The atoms of $\cP^{(m)}$ are the sets of points whose first $m$ branch symbols are prescribed.

\begin{lemma}[Polynomial growth of the natural coding]\label{lem:polynomial-coding}
Let \(S\) be an interval translation map with continuity partition
\(\cP=\{I_1,\ldots,I_n\}\), branch translations \(x\mapsto x+c_k\), and endpoint set
\[
        \cH=\{t_0,\ldots,t_n\}.
\]
Then
\[
        \#\cP^{(m)}
        \leq
        1+2(n+1)\binom{m+n-1}{n}.
\]
In particular,
\[
        \lim_{m\to\infty}\frac1m\log \#\cP^{(m)}=0.
\]
\end{lemma}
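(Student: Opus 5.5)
The plan is to show that every atom of \(\cP^{(m)}\) is an interval, and then to count the possible endpoints of these intervals, using the fact that translations commute.

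\textbf{Atoms are intervals.} Fix a word \(k_0k_1\cdots k_{m-1}\) of branch symbols. The corresponding atom is
\[
        A=\bigcap_{j=0}^{m-1}\bigl(I_{k_j}-(c_{k_0}+\cdots+c_{k_{j-1}})\bigr).
\]
This holds because a point of \(I_{k_0}\cap S^{-1}I_{k_1}\cap\cdots\) that follows the prescribed itinerary satisfies \(S^jx=x+c_{k_0}+\cdots+c_{k_{j-1}}\). Each set in the intersection is a half-open interval \([a,b)\), so \(A\) is a (possibly empty) half-open interval. Hence \(\cP^{(m)}\) is a partition of \([0,1)\) into half-open intervals.

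\textbf{Locating the endpoints.} Every endpoint of a nonempty atom is an endpoint of one of the intervals above. It therefore has the form
\[
        t_i-(c_{k_0}+\cdots+c_{k_{j-1}}),
        \qquad 0\le j\le m-1,\quad 0\le i\le n.
\]
The key observation is that the sum \(c_{k_0}+\cdots+c_{k_{j-1}}\) depends only on the multiset \(\{k_0,\dots,k_{j-1}\}\), that is, on the vector of symbol counts, and not on the order of the symbols. The number of multisets of size \(j\) drawn from \(n\) symbols is \(\binom{j+n-1}{n-1}\). So the set \(E_m\) of all possible endpoints has cardinality at most
\[
        (n+1)\sum_{j=0}^{m-1}\binom{j+n-1}{n-1}=(n+1)\binom{m+n-1}{n},
\]
where the last equality is the hockey-stick identity.

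\textbf{Counting atoms.} A partition of \([0,1)\) into nonempty half-open intervals whose endpoints all lie in a finite set \(E\) has at most \(\#E+1\) pieces. Indeed, the pieces are ordered from left to right, and consecutive pieces share an endpoint. This already gives the bound with the factor \(1\). The stated factor \(2\) leaves room for one-sided endpoint bookkeeping, for example if one prefers to count the left and right endpoints of each atom separately, or to allow the endpoint \(1\). Finally, \(\binom{m+n-1}{n}\) is a polynomial in \(m\) of degree \(n\), so \(\frac1m\log\#\cP^{(m)}\to0\).

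\textbf{Main obstacle.} The main difficulty is the counting step. The naive count of preimages of the cuts \(t_i\) grows like \(n^m\), and the whole argument rests on replacing ordered words by multisets via commutativity of translations. I would check carefully that the endpoint formula uses exactly the partial sums along the itinerary, so that the reduction to multisets is legitimate.
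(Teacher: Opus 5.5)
Your proof is correct, and it reaches the bound by a somewhat different route from the paper's.

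\textbf{What is shared.} The counting core is the same in both. The relevant points have the form \(t_\ell-\sum_k a_kc_k\) with \(a_1+\cdots+a_n=j\le m-1\). Commutativity of translations reduces ordered words to count vectors, and the hockey-stick identity then gives at most \((n+1)\binom{m+n-1}{n}\) such points.

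\textbf{Where you differ.} The difference is in how the cuts of \(\cP^{(m)}\) are located.
\begin{itemize}
\item The paper proves the one-step inclusion \(\partial_I(S^{-1}A)\subset\cH\cup S^{-1}(\partial_I A)\). It propagates this by induction to \(\partial_I\cP^{(m)}\subset D_m=\bigcup_{r<m}S^{-r}(\cH)\). It then counts atoms as components of \(I\setminus D_m\) plus possible extra atoms sitting at points of \(D_m\). That second term is where its factor \(2\#D_m+1\) comes from.
\item You instead write each cylinder explicitly as \(\bigcap_j\bigl(I_{k_j}-s_j\bigr)\). This shows directly that every nonempty atom is a nondegenerate half-open interval \([a,b)\) whose endpoints lie in your candidate set.
\end{itemize}

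\textbf{What your route buys.} Distinct atoms then have distinct left endpoints, so you get \(\#\cP^{(m)}\le(n+1)\binom{m+n-1}{n}\) outright. The factor \(2\) in the statement is pure slack, and no "one-sided endpoint bookkeeping" is needed to justify it. The paper's boundary-propagation argument uses only that \(S\) is locally a translation off \(\cH\). Your explicit description is more elementary and slightly sharper for interval translation maps.

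\textbf{One small omission.} For the limit, also record the trivial lower bound \(\#\cP^{(m)}\ge1\), so that \(\frac1m\log\#\cP^{(m)}\ge0\).
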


\begin{proof}
For a finite partition \(\mathcal Q\), let
\[
        \partial_I\mathcal Q
        =\bigcup_{Q\in\mathcal Q}\partial_I Q,
\]
where boundaries are taken relative to \(I=[0,1)\). We first prove the one-step inclusion
\[
        \partial_I(S^{-1}A)
        \subset
        \cH\cup S^{-1}(\partial_I A)
\tag{3.1}
\label{eq:one-step-boundary}
\]
for every set \(A\subset I\). Indeed, if
\(x\notin\cH\cup S^{-1}(\partial_I A)\), then \(x\) lies in the interior of one branch,
and on a neighborhood of \(x\) the map \(S\) is a translation. Since
\(S(x)\notin\partial_I A\), after shrinking the neighborhood its image lies either entirely
inside \(A\) or entirely outside \(A\). Hence \(x\notin\partial_I(S^{-1}A)\), proving
\eqref{eq:one-step-boundary}.

We now claim that, for every \(j\geq0\),
\[
        \partial_I(S^{-j}\cP)
        \subset
        \bigcup_{r=0}^{j}S^{-r}(\cH).
\tag{3.2}
\label{eq:pullback-boundary-induction}
\]
For \(j=0\), this is just \(\partial_I\cP\subset\cH\). If it holds for \(j\), then
\eqref{eq:one-step-boundary}, applied to every member of \(S^{-j}\cP\), gives
\[
\begin{aligned}
\partial_I(S^{-(j+1)}\cP)
&\subset
\cH\cup S^{-1}\bigl(\partial_I(S^{-j}\cP)\bigr)\\
&\subset
\cH\cup S^{-1}\left(\bigcup_{r=0}^{j}S^{-r}(\cH)\right)\\
&=
\bigcup_{r=0}^{j+1}S^{-r}(\cH).
\end{aligned}
\]
This completes the induction.

An atom of the join
\[
        \cP^{(m)}=\bigvee_{j=0}^{m-1}S^{-j}\cP
\]
is an intersection of one member from each pulled-back partition. Since the boundary of a
finite intersection is contained in the union of the individual boundaries,
\[
        \partial_I\cP^{(m)}
        \subset
        D_m:=\bigcup_{r=0}^{m-1}S^{-r}(\cH).
\tag{3.3}
\label{eq:refinement-boundary}
\]
Consequently, on every connected component of \(I\setminus D_m\), the first \(m\) branch
symbols are constant.

It remains to estimate \(D_m\). Fix \(j\geq0\) and let \(x\in S^{-j}(\cH)\). If the first
\(j\) branch symbols of \(x\) are \(k_0,\ldots,k_{j-1}\), then along this itinerary
\[
        S^j(x)=x+\sum_{r=0}^{j-1}c_{k_r}.
\]
Thus, for some endpoint \(t_\ell\in\cH\),
\[
        x=t_\ell-\sum_{r=0}^{j-1}c_{k_r}
         =t_\ell-\sum_{k=1}^n a_kc_k,
\]
where \(a_k\) is the number of visits to branch \(I_k\) during the first \(j\) iterates.
Hence \(a_k\geq0\) and
\[
        a_1+\cdots+a_n=j.
\]
For fixed \(j\), the number of possible vectors \((a_1,\ldots,a_n)\) is
\(\binom{j+n-1}{n-1}\). Different itineraries may produce the same vector or the same point,
so this gives an upper bound. Therefore
\[
        \#D_m
        \leq
        (n+1)\sum_{j=0}^{m-1}\binom{j+n-1}{n-1}
        =
        (n+1)\binom{m+n-1}{n}.
\]
The components of \(I\setminus D_m\) contribute at most \(\#D_m+1\) atoms of
\(\cP^{(m)}\), while points of \(D_m\cap I\) contribute at most \(\#D_m\) further atoms.
Hence
\[
        \#\cP^{(m)}
        \leq
        2\#D_m+1
        \leq
        1+2(n+1)\binom{m+n-1}{n}.
\]
The right-hand side grows polynomially in \(m\), proving the asserted limit.
\end{proof}

\begin{remark}
The preceding estimate is a direct one-dimensional coding argument. It is consistent
with Buzzi's general theorem that piecewise isometries have zero topological entropy
\cite{Buzzi2001}, but no identification between different notions of topological entropy
is needed in the proof below.
\end{remark}

\begin{lemma}[Zero entropy of the continuity partition]\label{lem:zero-partition-entropy}
Let $\mu$ be an $S$-invariant Borel probability measure. Then
\[
        h_\mu(S,\cP)=0.
\]
\end{lemma}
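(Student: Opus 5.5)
The plan is to bound the static entropy of the refined partitions by the logarithm of their cardinality and then apply Lemma~\ref{lem:polynomial-coding}. By definition,
\[
        h_\mu(S,\cP)=\lim_{m\to\infty}\frac1m H_\mu\bigl(\cP^{(m)}\bigr),
\]
and the limit exists because of subadditivity of \(m\mapsto H_\mu(\cP^{(m)})\). Subadditivity uses only the \(S\)-invariance of \(\mu\), through \(H_\mu(S^{-j}\mathcal Q)=H_\mu(\mathcal Q)\) for every finite partition \(\mathcal Q\). Since \(\cP^{(m)}\) is a finite measurable partition, each atom being a finite intersection of preimages of intervals under a Borel map, Jensen's inequality gives
\[
        H_\mu\bigl(\cP^{(m)}\bigr)\leq \log\#\cP^{(m)}.
\]
This bound counts only atoms of positive measure, which is harmless.

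Next I combine this with Lemma~\ref{lem:polynomial-coding}:
\[
        0\leq \frac1m H_\mu\bigl(\cP^{(m)}\bigr)\leq \frac1m\log\Bigl(1+2(n+1)\tbinom{m+n-1}{n}\Bigr).
\]
The right-hand side tends to \(0\), because the binomial coefficient is \(O(m^n)\), so its logarithm is \(O(\log m)\). This forces \(h_\mu(S,\cP)=0\).

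There is no genuine obstacle. The only point needing care is that \(S\) is non-invertible, so the entropy is the one-sided Kolmogorov--Sinai entropy of a measure-preserving endomorphism. The standard existence of the limit, and the inequality \(H\leq\log(\text{number of atoms})\), hold verbatim in that setting. Note also that the result needs neither non-atomicity of \(\mu\) nor any assumption on its support. The polynomial bound is a purely combinatorial count of itineraries, valid for every point of \(I\), so it controls every invariant measure uniformly.
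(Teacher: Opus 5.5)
Your proposal is correct and follows the paper's own proof: bound $H_\mu(\cP^{(m)})$ by $\log\#\cP^{(m)}$, insert the polynomial estimate of Lemma~\ref{lem:polynomial-coding}, and pass to the limit that defines $h_\mu(S,\cP)$. Your extra remarks are accurate but go beyond what the paper spells out: the limit exists for non-invertible endomorphisms, and neither non-atomicity nor any support assumption is needed.
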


\begin{proof}
For every $m\geq1$, Shannon entropy is bounded by the logarithm of the number of atoms:
\[
        H_\mu(\cP^{(m)})\leq \log \#\cP^{(m)}.
\]
Therefore
\[
        0\leq \frac1m H_\mu(\cP^{(m)})\leq \frac1m\log \#\cP^{(m)}.
\]
By Lemma~\ref{lem:polynomial-coding}, the right-hand side tends to $0$. Since
\[
        h_\mu(S,\cP)=\lim_{m\to\infty}\frac1m H_\mu(\cP^{(m)}),
\]
the result follows.
\end{proof}

\begin{lemma}[Entropy identity]\label{lem:entropy-identity}
Let $S$ be a measure-preserving transformation and let $\cP$ be a finite measurable partition. Then
\[
        h_\mu(S,\cP)=H_\mu\left(\cP\,\middle|\,\bigvee_{j=1}^{\infty}S^{-j}\cP\right).
\]
\end{lemma}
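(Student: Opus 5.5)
The plan is to derive the identity from the chain rule for conditional entropy, invariance of \(\mu\), and continuity of conditional entropy under increasing \(\sigma\)-algebras. Write \(\cP_1^{m}=\bigvee_{j=1}^{m}S^{-j}\cP\), so that \(\cP^{(m+1)}=\cP\vee\cP_1^{m}\). The chain rule gives
\[
H_\mu(\cP^{(m+1)})=H_\mu(\cP_1^{m})+H_\mu\bigl(\cP\,\big|\,\cP_1^{m}\bigr).
\]
We have \(\cP_1^{m}=S^{-1}\cP^{(m)}\), and \(S\) preserves \(\mu\). Hence \(H_\mu(\cP_1^{m})=H_\mu(\cP^{(m)})\), and
\[
H_\mu(\cP^{(m+1)})-H_\mu(\cP^{(m)})=H_\mu\bigl(\cP\,\big|\,\cP_1^{m}\bigr).
\]
Summing telescopically from \(m=0\) to \(M-1\), with the convention \(\cP_1^{0}\) trivial, gives
\[
H_\mu(\cP^{(M)})=\sum_{m=0}^{M-1}H_\mu\bigl(\cP\,\big|\,\cP_1^{m}\bigr).
\]

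The next step is to show that the terms \(a_m:=H_\mu(\cP\mid\cP_1^{m})\) are non-increasing in \(m\) and converge to \(H_\mu(\cP\mid\bigvee_{j\ge1}S^{-j}\cP)\). Monotonicity holds because conditioning on a finer partition cannot increase entropy. For the limit, use the martingale convergence theorem. Since \(\cP\) is finite, \(H_\mu(\cP\mid\mathcal F)=\int\sum_{P\in\cP}-\mathbf 1_P\log\mu(P\mid\mathcal F)\,d\mu\). The \(\sigma\)-algebras \(\sigma(\cP_1^{m})\) increase to \(\mathcal F_\infty=\sigma\bigl(\bigvee_{j\ge1}S^{-j}\cP\bigr)\), so \(\mu(P\mid\sigma(\cP_1^m))\to\mu(P\mid\mathcal F_\infty)\) almost everywhere. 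The function \(t\mapsto -t\log t\) is bounded and continuous on \([0,1]\). Rewriting the conditional entropy as \(\int\sum_P\eta(\mu(P\mid\mathcal F))\,d\mu\) with \(\eta(t)=-t\log t\), bounded convergence then gives \(a_m\to a_\infty\).

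Finally, the Cesàro mean of a convergent sequence converges to the same limit, so
\[
h_\mu(S,\cP)=\lim_{M\to\infty}\frac1M H_\mu(\cP^{(M)})=\lim_{M\to\infty}\frac1M\sum_{m=0}^{M-1}a_m=a_\infty.
\]
This is the claimed identity.

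The only genuinely analytic step is the passage to the limit in the conditional entropy, the martingale argument above. Everything else is bookkeeping with the chain rule and invariance. Standard references (e.g. Walters) also treat this identity as classical, so the proof could alternatively be replaced by a citation.
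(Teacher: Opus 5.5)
Your proof is correct and follows essentially the same route as the paper. The chain rule plus invariance gives $H_\mu(\cP^{(M)})=\sum_{m=0}^{M-1}a_m$; you telescope via $\cP^{(m+1)}=\cP\vee S^{-1}\cP^{(m)}$, whereas the paper expands the full join and shifts each summand. Martingale convergence then gives $a_m\to a_\infty$, and a Ces\`aro average finishes, with your bounded-convergence argument using $\eta(t)=-t\log t$ simply spelling out the continuity step the paper cites from Walters.
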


\begin{proof}
The complete proof is given in Appendix~\ref{app:entropy-automorphic-details}.
\end{proof}

\begin{lemma}[Zero conditional entropy determines the branch]\label{lem:branch-future}
Let $\alpha:I\to\{1,\ldots,n\}$ be the branch label,
\[
        \alpha(x)=k\quad\Longleftrightarrow\quad x\in I_k.
\]
If $h_\mu(S,\cP)=0$, then $\alpha$ is measurable modulo $\mu$ with respect to the future sigma-algebra
\[
        \cP^+=\bigvee_{j=1}^{\infty}S^{-j}\cP.
\]
Equivalently, the present branch of $x$ is determined $\mu$-almost everywhere by the future itinerary $Sx,S^2x,S^3x,\ldots$.
\end{lemma}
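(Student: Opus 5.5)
The plan is to combine Lemma~\ref{lem:entropy-identity} with the standard fact that zero conditional entropy of a finite partition forces measurability. Since $h_\mu(S,\cP)=0$, Lemma~\ref{lem:entropy-identity} gives
\[
        H_\mu\bigl(\cP\,\big|\,\cP^+\bigr)=0 .
\]
Write this conditional entropy through conditional expectations. For each $k$ put $f_k=\mu(I_k\mid\cP^+)=E_\mu[\mathbf 1_{I_k}\mid\cP^+]$. Then
\[
        H_\mu(\cP\mid\cP^+)=\int\sum_{k=1}^n\phi\bigl(f_k(x)\bigr)\,d\mu(x),
        \qquad \phi(t)=-t\log t .
\]
Here $\phi\geq0$ on $[0,1]$ and vanishes only at $t\in\{0,1\}$. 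Vanishing of the integral therefore forces $f_k(x)\in\{0,1\}$ for $\mu$-a.e.\ $x$ and every $k$.

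The next step identifies $f_k$ with $\mathbf 1_{I_k}$. Let $B_k=\{f_k=1\}$, which lies in $\cP^+$. Then
\[
        \mu(I_k\cap B_k)=\int_{B_k}f_k\,d\mu=\mu(B_k)
        \quad\text{and}\quad
        \mu(I_k\setminus B_k)=\int_{B_k^c}f_k\,d\mu=0 .
\]
Hence $\mu(I_k\,\triangle\,B_k)=0$. Each $I_k$ thus agrees mod $\mu$ with a $\cP^+$-measurable set, and
\[
        \alpha=\sum_k k\,\mathbf 1_{I_k}=\sum_k k\,\mathbf 1_{B_k}\quad\mu\text{-a.e.}
\]
So $\alpha$ is $\cP^+$-measurable mod $\mu$.

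For the equivalent formulation, note that $\cP^+$ is generated by the functions $\alpha\circ S^j$ for $j\geq1$, that is, by the branch symbols of $Sx,S^2x,\dots$. By the Doob--Dynkin lemma there is a Borel $F:\{1,\dots,n\}^{\mathbb N}\to\{1,\dots,n\}$ with
\[
        \alpha(x)=F\bigl(\alpha(Sx),\alpha(S^2x),\dots\bigr)\quad\text{for $\mu$-a.e.\ }x .
\]
This is the statement that the present branch is determined by the future itinerary.

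The main point needing care is the conditional-entropy formula. One must justify that $H_\mu(\cP\mid\cP^+)$, defined for the non-finite $\sigma$-algebra $\cP^+$, equals the integral of $\sum_k\phi(f_k)$. If the appendix defines it as the limit of $H_\mu\bigl(\cP\mid\bigvee_{j=1}^mS^{-j}\cP\bigr)$, I would pass to the limit using the martingale convergence theorem. Concretely, $E[\mathbf 1_{I_k}\mid\cF_m]\to f_k$ a.e., where $\cF_m=\bigvee_{j=1}^mS^{-j}\cP$. Since $\phi$ is bounded and continuous, dominated convergence then lets the limit pass inside the integral. Everything else is elementary.
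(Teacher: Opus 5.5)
Your proposal is correct and follows the paper's route. The paper also obtains $H_\mu(\cP\mid\cP^+)=0$ from Lemma~\ref{lem:entropy-identity} and then cites the standard fact (Walters, Section~4.3) that vanishing conditional entropy of a finite partition forces measurability modulo $\mu$; you prove that fact by hand via the conditional expectations $f_k$, and your Doob--Dynkin step and martingale/dominated-convergence remark are valid additions that the paper leaves implicit.
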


\begin{proof}
The complete proof is given in Appendix~\ref{app:entropy-automorphic-details}.
\end{proof}

\begin{lemma}[The branch is a function of the image point]\label{lem:branch-image}
There exists a measurable function
\[
        \beta:I\to\{1,\ldots,n\}
\]
such that
\[
        \alpha(x)=\beta(Sx)
\]
for $\mu$-almost every $x$.
\end{lemma}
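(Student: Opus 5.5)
We combine Lemma~\ref{lem:zero-partition-entropy} with Lemma~\ref{lem:branch-future}. The present branch \(\alpha(x)\) is then measurable modulo \(\mu\) with respect to the future sigma-algebra \(\cP^+=\bigvee_{j\geq1}S^{-j}\cP\). Every set in \(\cP^+\) has the form \(S^{-1}B\), where \(B\) lies in \(\cP^{\infty}:=\bigvee_{j\geq0}S^{-j}\cP\). This holds because \(\cP^+=S^{-1}(\cP^\infty)\), and pullback commutes with countable joins and with generated sigma-algebras. The plan is to turn ``\(\alpha\) is \(\cP^+\)-measurable mod \(\mu\)'' into ``\(\alpha=\beta\circ S\) a.e.'' for some Borel \(\beta\).

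First step. For each \(k\), the set \(I_k\) agrees mod \(\mu\) with some set in \(\cP^+\). So there is \(B_k\in\cP^\infty\), in particular a Borel subset of \(I\), with
\[
\mu\bigl(I_k\,\triangle\,S^{-1}B_k\bigr)=0.
\]
To justify this, note that \(\cP^+=S^{-1}\cP^\infty\) is a sigma-algebra. The collection of sets \(A\) for which \(A=S^{-1}B\) for some \(B\in\cP^\infty\) contains the generators \(S^{-j}P\) with \(j\geq1\), and it is closed under complements and countable unions. The \(\mu\)-completion then only adds null differences.

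Second step: disjointify. The sets \(S^{-1}B_k\) cover \(I\) mod \(\mu\) and are pairwise disjoint mod \(\mu\), but the \(B_k\) themselves need not be. Define
\[
B'_1=B_1,\qquad B'_k=B_k\setminus\bigcup_{l<k}B_l .
\]
Then put \(\beta=k\) on \(B'_k\), and \(\beta=1\) on the complement of \(\bigcup_k B_k\). This \(\beta\) is Borel. Moreover \(S^{-1}B'_k\) differs from \(S^{-1}B_k\) only inside
\[
\bigcup_{l<k}\bigl(S^{-1}B_k\cap S^{-1}B_l\bigr),
\]
which is \(\mu\)-null because the \(I_k\) are disjoint. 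Likewise \(S^{-1}\bigl(I\setminus\bigcup_k B_k\bigr)\) is null, since the \(I_k\) cover \(I\). Hence \(\beta(Sx)=k\) for \(\mu\)-a.e. \(x\in I_k\), which is the claim.

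The only subtle point is the first step: identifying \(\cP^+\)-measurability modulo \(\mu\) with the existence of a genuinely Borel representative of the form \(S^{-1}B\). I expect this to be routine. Measurability of \(S\) follows because \(S\) is piecewise a translation on finitely many intervals, so \(S^{-j}P\) is Borel for every \(j\). Completion issues are also harmless: any set in the \(\mu\)-completion of \(\cP^+\) is within a null set of a set in \(\cP^+\) itself. If the appendix version of Lemma~\ref{lem:branch-future} gives \(\alpha\) as a conditional-expectation limit \(\mathbb{E}(\mathbf 1_{I_k}\mid\cP^+)\in\{0,1\}\) a.e., the same factorization holds directly. Indeed, a \(\cP^+\)-measurable function is of the form \(g\circ S\) with \(g\) \(\cP^\infty\)-measurable, by the Doob--Dynkin lemma applied to the map \(x\mapsto Sx\) into \((I,\cP^\infty)\). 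Taking \(\beta=\sum_k k\,\mathbf 1_{\{g_k=1\}}\), with the disjointification above, finishes the argument.
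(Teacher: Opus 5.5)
Your proposal is correct and follows essentially the same route as the paper: write $\cP^+=S^{-1}\mathcal F$ with $\mathcal F=\sigma\bigl(\bigvee_{j\geq0}S^{-j}\cP\bigr)$, choose $B_k\in\mathcal F$ with $I_k=S^{-1}B_k$ modulo $\mu$, and make the $B_k$ into a genuine partition before defining $\beta$. The only difference is cosmetic: the paper uses invariance to see that the $B_k$ themselves overlap only in $\mu$-null sets and then modifies them, while your explicit disjointification checks directly that the preimages change only by null sets, so it does not need invariance at that step.
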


\begin{proof}
The complete proof is given in Appendix~\ref{app:entropy-automorphic-details}.
\end{proof}

\begin{lemma}[Automorphic core from two one-sided inverses]\label{lem:automorphic-core}
Let \((Z,\mathcal A,\mu)\) be a standard Borel probability space and let
\(S,R:Z\to Z\) be measurable maps. Assume that \(S\) preserves \(\mu\), and that
\[
        R\circ S=\mathrm{id},\qquad S\circ R=\mathrm{id}
\]
hold \(\mu\)-almost everywhere. Then there exists a full-measure Borel set
\(Z_0\subset Z\) such that \(S(Z_0)=Z_0\), \(R(Z_0)=Z_0\), and the restrictions
\(S|_{Z_0}\) and \(R|_{Z_0}\) are inverse measurable measure-preserving bijections.
\end{lemma}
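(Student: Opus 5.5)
The plan is to take the good set where both identities hold and shrink it until it is invariant under $S$ and $R$. Let
\[
        G=\{z\in Z:\ R(Sz)=z\ \text{and}\ S(Rz)=z\}.
\]
This set is Borel, because it is defined by equalities between measurable maps into a standard Borel space (whose diagonal is Borel). By hypothesis $\mu(G)=1$.

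First we check that $R$ preserves $\mu$. For a Borel set $A$, we have $R^{-1}A\supset\{z\in G: Rz\in A\}$. On $G$, the condition $Rz\in A$ is equivalent to $z\in S^{-1}\ldots$; more directly,
\[
        S^{-1}(R^{-1}A)\cap G=\{z\in G:\ RSz\in A\}=A\cap G.
\]
Since $S$ preserves $\mu$, this gives $\mu(R^{-1}A)=\mu(S^{-1}R^{-1}A)=\mu(A)$.

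Next, define
\[
        Z_0=\bigcap_{j\ge0}\bigcap_{k\ge0}S^{-j}R^{-k}G.
\]
Better still, use all words in $S$ and $R$: let $W$ be the countable set of finite compositions of $S$ and $R$, and put
\[
        Z_0=\bigcap_{w\in W} w^{-1}(G).
\]
Each $w$ preserves $\mu$ as a composition of measure-preserving maps. Hence each $w^{-1}(G)$ is Borel with full measure, and the countable intersection $Z_0$ is Borel with $\mu(Z_0)=1$. If $z\in Z_0$, then $Sz\in Z_0$, since $w^{-1}(G)\ni Sz$ amounts to $z\in (wS)^{-1}G$. Likewise $Rz\in Z_0$. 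So $S(Z_0)\subset Z_0$ and $R(Z_0)\subset Z_0$.

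For the reverse inclusions, take $z\in Z_0\subset G$. Then $z=S(Rz)$ with $Rz\in Z_0$, so $Z_0\subset S(Z_0)$. Similarly $z=R(Sz)$ gives $Z_0\subset R(Z_0)$.

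On $Z_0$ the identities $RS=\mathrm{id}$ and $SR=\mathrm{id}$ hold pointwise, so $S|_{Z_0}$ and $R|_{Z_0}$ are mutually inverse bijections of $Z_0$. Both are measurable, and both preserve $\mu|_{Z_0}$ by the first step.

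The only delicate point is invariance. It must be secured by intersecting over all words rather than just forward $S$-iterates, since $R$-invariance would otherwise fail. Measurability of $G$ uses standardness. No deeper obstacle is expected.
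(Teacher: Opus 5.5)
Your proof is correct and follows essentially the same route as the paper. Both use the same good set $G$, the same computation $\mu(R^{-1}A)=\mu(S^{-1}R^{-1}A)=\mu(A)$, and a countable intersection of full-measure preimages of $G$. The only difference is in the intersection: the paper takes just $\bigcap_{m\ge0}S^{-m}G\cap\bigcap_{m\ge0}R^{-m}G$ and uses $R(Sz)=z$ and $S(Rz)=z$ on $G$ to collapse mixed iterates, so intersecting over all words is harmless but not needed. Also make sure the identity word lies in $W$, so that $Z_0\subset G$ as your final step requires.
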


\begin{proof}
The complete proof is given in Appendix~\ref{app:entropy-automorphic-details}.
\end{proof}

\begin{theorem}[Almost-everywhere invertibility]\label{thm:ae-invertible}
Let $S$ be an interval translation map and let $\mu$ be a non-atomic $S$-invariant Borel probability measure. Put $J=\supp\mu$. Then $S$ is invertible $\mu$-almost everywhere on $J$. More precisely, there exists an invariant full-measure Borel set $X_0\subset J$ such that
\[
        S:X_0\to X_0
\]
is a measurable bijection whose inverse is measurable and measure-preserving.
\end{theorem}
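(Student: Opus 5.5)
The plan is to assemble the preceding lemmas. By Lemma~\ref{lem:zero-partition-entropy}, \(h_\mu(S,\cP)=0\). Then Lemma~\ref{lem:branch-future} shows that the present branch is determined by the future itinerary, and Lemma~\ref{lem:branch-image} gives a measurable \(\beta:I\to\{1,\ldots,n\}\) with \(\alpha(x)=\beta(Sx)\) for \(\mu\)-a.e.\ \(x\).

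With this, I define a candidate inverse
\[
        R(y)=y-c_{\beta(y)} ,
\]
modified on a Borel set to take values in \(I\). On the Borel set \(\{y: y-c_{\beta(y)}\in I_{\beta(y)}\}\) I use the formula above. Elsewhere I set \(R(y)\) to any fixed point of \(I\). This makes \(R\) Borel, since \(\beta\) is Borel with finitely many values.

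The first identity is immediate. For a.e.\ \(x\) we have \(\beta(Sx)=\alpha(x)\), hence \(Sx-c_{\alpha(x)}=x\in I_{\alpha(x)}\), and so \(R(Sx)=x\). Thus \(R\circ S=\mathrm{id}\) \(\mu\)-a.e.

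The second identity \(S\circ R=\mathrm{id}\) a.e.\ is the step needing care. Let \(G=\{x: R(Sx)=x\}\), a full-measure set. By invariance, \(\mu(S^{-1}(S(G)))\ge\mu(G)=1\), but \(S(G)\) need not be Borel. To avoid this, I work with \(E=\{y: S(R(y))=y\}\), which is Borel. For \(x\in G\) we have \(S(R(Sx))=Sx\), so \(Sx\in E\) and \(G\subset S^{-1}E\). Invariance then gives \(\mu(E)=\mu(S^{-1}E)=1\). Hence \(S\circ R=\mathrm{id}\) a.e. I expect this measurability bookkeeping, that is, avoiding direct images of null sets, to be the main subtle point. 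It is handled exactly by passing through the Borel set \(E\) and invariance under preimages.

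Lemma~\ref{lem:automorphic-core} now applies with \(Z=[0,1]\), or \(I\), with its Borel structure, and produces a full-measure Borel set \(Z_0\) with \(S(Z_0)=Z_0\) and \(S|_{Z_0}\), \(R|_{Z_0}\) mutually inverse measure-preserving bijections. To land inside \(J\), I put \(X_0=\bigcap_{k\in\mathbb Z}S^{k}(Z_0\cap J)\), computed via the bijection \(S|_{Z_0}\), that is, using \(R\) for negative powers. Since \(\mu(J)=1\) and each \(S^k|_{Z_0}\) preserves \(\mu\), every term has full measure, so \(X_0\) is a countable intersection of full-measure Borel sets. It is invariant under both \(S\) and \(R\), lies in \(J\), and \(S:X_0\to X_0\) is a bijection with measurable, measure-preserving inverse \(R|_{X_0}\).
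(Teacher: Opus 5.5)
Your proof is correct and follows the paper's route: the same reduction from zero entropy to the function \(\beta\), essentially the same candidate inverse \(R\), the same passage through the Borel set \(\{y:S(Ry)=y\}\) together with invariance of preimages to get \(S\circ R=\mathrm{id}\) a.e., and then Lemma~\ref{lem:automorphic-core}. (Incidentally, \(S(G)=\bigcup_k\tau_k(G\cap I_k)\) is Borel because each \(\tau_k\) is a translation, and the paper uses this fact, but your argument does not need it.)

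The only real difference is at the end. The paper first modifies \(S\) and \(R\) on null sets to make them self-maps of \(J\), and then applies the lemma on \((J,\mu)\). You instead apply the lemma on \((I,\mu)\) and keep only the points whose whole two-sided orbit under the bijection \(S|_{Z_0}\) stays in \(J\). This is a valid and slightly cleaner way to land inside \(J\).
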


\begin{proof}
By Lemma~\ref{lem:zero-partition-entropy}, \(h_\mu(S,\cP)=0\). Hence
Lemma~\ref{lem:branch-image} gives a measurable function
\(\beta:I\to\{1,\ldots,n\}\) such that
\[
        \alpha(x)=\beta(Sx)
\]
for \(\mu\)-almost every \(x\).

Choose a fixed point \(x_*\in I\). For every branch \(I_k\), define a measurable map
\(R_k:I\to I\) by
\[
R_k(y)=
\begin{cases}
        y-c_k, & y\in S(I_k),\\
        x_*,   & y\notin S(I_k).
\end{cases}
\]
On \(S(I_k)\), this is the inverse of the restriction \(S|_{I_k}\). Define
\[
        R(y)=R_{\beta(y)}(y).
\]
Since \(\beta\) has finite range and each \(R_k\) is measurable, \(R:I\to I\) is
measurable.

Let \(E\subset J\) be a Borel full-measure set on which
\(\alpha(x)=\beta(Sx)\). If \(x\in E\cap I_k\), then \(\beta(Sx)=k\) and
\(Sx\in S(I_k)\), so
\[
        R(Sx)=R_k(Sx)=x.
\]
Thus \(R\circ S=\mathrm{id}\) \(\mu\)-almost everywhere on \(J\).

Next set
\[
        F=\{y\in I:S(Ry)=y\}.
\]
The set \(F\) is Borel. If \(x\in E\), then
\(S(R(Sx))=Sx\), so \(Sx\in F\), equivalently \(E\subset S^{-1}F\). By
invariance,
\[
        \mu(F)=\mu(S^{-1}F)\geq\mu(E)=1.
\]
Hence \(S\circ R=\mathrm{id}\) \(\mu\)-almost everywhere.

We now replace \(S\) and \(R\), on null sets only, by measurable
self-maps of \(J\). Put
\[
        N_S
        :=
        \{x\in J:Sx\notin J\}.
\]
Since \(\mu(J)=1\) and \(S\) preserves \(\mu\),
\[
        \mu(N_S)
        =
        \mu\bigl(J\setminus S^{-1}J\bigr)
        =
        0.
\]

Moreover,
\[
        S(E)
        =
        \bigcup_{k=1}^{n}\tau_k(E\cap I_k)
\]
is Borel. Since
\[
        E\subset S^{-1}(S(E)),
\]
invariance gives
\[
        \mu(S(E))
        =
        \mu\bigl(S^{-1}(S(E))\bigr)
        \geq
        \mu(E)
        =
        1.
\]
Thus \(S(E)\cap J\) has full measure in \(J\). For every
\(y=Sx\) with \(x\in E\), we have
\[
        R(y)=R(Sx)=x\in J.
\]
Consequently, the Borel set
\[
        N_R
        :=
        \{y\in J:R(y)\notin J\}
\]
is \(\mu\)-null.

Choose a fixed point \(z_*\in J\) and define
\[
        \widetilde S(x)
        :=
        \begin{cases}
        S(x),&x\in J\setminus N_S,\\
        z_*,&x\in N_S,
        \end{cases}
\]
and
\[
        \widetilde R(y)
        :=
        \begin{cases}
        R(y),&y\in J\setminus N_R,\\
        z_*,&y\in N_R.
        \end{cases}
\]
Then \(\widetilde S,\widetilde R:J\to J\) are measurable.

We verify explicitly that the two almost-everywhere inverse identities
are unchanged. Let
\[
        A
        :=
        \{x\in J:R(Sx)=x\}.
\]
The set \(A\) has full measure. If
\[
        x\notin
        (J\setminus A)\cup N_S\cup S^{-1}(N_R),
\]
then
\[
        \widetilde S(x)=S(x),
        \qquad
        \widetilde R(Sx)=R(Sx),
\]
and hence
\[
        \widetilde R(\widetilde Sx)=x.
\]
Now
\[
        \mu(S^{-1}(N_R))=\mu(N_R)=0
\]
by the \(S\)-invariance of \(\mu\). Therefore
\[
        \widetilde R\circ\widetilde S
        =
        \operatorname{id}
        \qquad
        \mu\text{-almost everywhere on }J.
\]

Similarly, let
\[
        B
        :=
        \{y\in J:S(Ry)=y\}.
\]
The set \(B\) has full measure. If
\[
        y\in B\setminus N_R,
\]
then \(R(y)\in J\) and
\[
        S(Ry)=y\in J.
\]
It follows that \(R(y)\notin N_S\). Hence neither modification affects
the composition at \(y\), and
\[
        \widetilde S(\widetilde Ry)
        =
        S(Ry)
        =
        y.
\]
Thus
\[
        \widetilde S\circ\widetilde R
        =
        \operatorname{id}
        \qquad
        \mu\text{-almost everywhere on }J.
\]

Finally, \(\widetilde S\) still preserves \(\mu\). Indeed, for every
Borel set \(C\subset J\),
\[
        \widetilde S^{-1}(C)
        \mathbin{\triangle}
        \bigl(S^{-1}(C)\cap J\bigr)
        \subset N_S,
\]
and therefore
\[
        \mu(\widetilde S^{-1}(C))
        =
        \mu(S^{-1}(C))
        =
        \mu(C).
\]
Renaming \(\widetilde S\) and \(\widetilde R\) as \(S\) and \(R\), we
may therefore apply Lemma~\ref{lem:automorphic-core} on the standard
Borel probability space \((J,\mu)\).

Lemma~\ref{lem:automorphic-core} gives an invariant full-measure Borel
set \(X_0\subset J\) on which \(S\) and \(R\) are inverse measurable
bijections. Since \(S\) preserves \(\mu\), the bijection
\[
        S:X_0\longrightarrow X_0
\]
is measure preserving.
\end{proof}

\begin{corollary}[Direct-image invariance on the invertible core]\label{cor:direct-core}
Let $X_0$ be as in Theorem~\ref{thm:ae-invertible}. If $A\subset X_0$ is Borel in the relative Borel structure, then
\[
        \mu(SA)=\mu(A).
\]
\end{corollary}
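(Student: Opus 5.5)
The plan is to use that $S$ restricted to $X_0$ is a bijection of $X_0$ onto itself, so for $A\subset X_0$ the direct image $SA$ can be rewritten as a full preimage intersected with $X_0$. Invariance of $\mu$ then applies directly. Here $S$ denotes the map supplied by Theorem~\ref{thm:ae-invertible}; on $X_0$ it agrees with the original interval translation map, apart from the null-set modification made in that proof.

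First, I will check that $SA$ is Borel. Since $S|_{X_0}:X_0\to X_0$ is a measurable bijection with measurable inverse $R|_{X_0}$, we have
\[
SA=(S|_{X_0})(A)=(R|_{X_0})^{-1}(A)=\{y\in X_0:R(y)\in A\}.
\]
This set is Borel in $X_0$, hence Borel in $J$ because $X_0$ is a Borel subset of $J$. Alternatively, one can argue as in the proof of Theorem~\ref{thm:ae-invertible}: $SA=\bigcup_k (A\cap I_k)+c_k$ is a finite union of translates of Borel sets.

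Next, I will establish the set identity
\[
S^{-1}(SA)\cap X_0=A.
\]
The inclusion $\supseteq$ holds because $A\subset X_0$. For $\subseteq$, if $x\in X_0$ and $Sx\in SA$, then $Sx=Sa$ for some $a\in A\subset X_0$, and injectivity of $S$ on $X_0$ gives $x=a$.

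Since $\mu(X_0)=1$ and $\mu$ is $S$-invariant, these two steps give
\[
\mu(SA)=\mu\bigl(S^{-1}(SA)\bigr)=\mu\bigl(S^{-1}(SA)\cap X_0\bigr)=\mu(A).
\]
The same conclusion also follows from the fact that $R|_{X_0}$ preserves measure, since $SA=(R|_{X_0})^{-1}(A)$.

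The only delicate point is the measurability of $SA$ together with the identification of $S$ on $X_0$ with the original translation map. Both are immediate from the construction of $X_0$, so I expect no real obstacle here.
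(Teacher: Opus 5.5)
Your proposal is correct and follows essentially the paper's argument: $SA$ is Borel because the restriction $S|_{X_0}$ has a Borel inverse, $(S|_{X_0})^{-1}(SA)=A$ (your identity $S^{-1}(SA)\cap X_0=A$), and invariance together with $\mu(X_0)=1$ gives $\mu(SA)=\mu(A)$. You simply make explicit the measure-preservation of the restriction, which the paper's proof asserts without spelling out.
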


\begin{proof}
The restriction \(S|_{X_0}\) is a Borel bijection with Borel inverse, so \(S(A)\) is
Borel in \(X_0\). Since the restriction is measure preserving and
\((S|_{X_0})^{-1}(S(A))=A\), we obtain \(\mu(SA)=\mu(A)\).
\end{proof}

\subsection{A universal countable IET model}

Theorem~\ref{thm:ae-invertible} supplies the only point at which the interval-translation
structure is needed in this subsection: after discarding a null set, the measured ITM becomes
a non-atomic measure-preserving automorphism of a standard Borel probability space. From that
point onward, the representation problem is purely measure-theoretic.

The principal external input is the theorem of Arnoux, Ornstein and Weiss
\cite{ArnouxOrnsteinWeiss1985}. It asserts that every aperiodic non-atomic
measure-preserving automorphism of a standard probability space admits a measurable model as
an infinite interval exchange transformation. In the terminology of
Definition~\ref{def:countable-iem}, this is a countable IET model for the aperiodic component.
A general automorphism may also have periodic components, so the AOW theorem cannot simply be
applied to the whole space without a decomposition. We separate the space into its aperiodic
part and the sets of points of exact period \(n\). The aperiodic part is represented by AOW,
while each period-\(n\) component is represented by a finite cyclic exchange of \(n\) interval
levels. Placing all non-null component models consecutively in \([0,1)\) yields a single
countable IET.

Thus almost-everywhere invertibility provides the automorphic core, and the AOW theorem,
together with the elementary periodic decomposition, provides the existence of an abstract
countable-IET representation. This model is not claimed to preserve the order inherited from
the original interval, and its conjugating coordinate need not be the distribution function
introduced in Section~\ref{sec:distribution-model}. The canonical order-preserving problem is
therefore treated separately in the subsequent sections.

\begin{theorem}[Countable IET model for automorphisms]
\label{thm:automorphism-countable-iem}
Let \((Z,\mathcal A,\nu,U)\) be a non-atomic measure-preserving automorphism of a standard
Borel probability space. Then it is metrically equivalent to a countable IET in the sense of Definition~\ref{def:countable-iem}.
\end{theorem}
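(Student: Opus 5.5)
The plan is to decompose \(Z\) into its aperiodic part and its exact-period parts, model each piece separately, and then concatenate the models inside \([0,1)\). For \(n\geq1\) put
\[
        P_n=\{z\in Z:U^nz=z,\ U^jz\neq z\ \text{for }0<j<n\},
        \qquad
        P_\infty=Z\setminus\bigcup_{n\geq1}P_n .
\]
All of these sets are Borel and \(U\)-invariant, since \(U\) is a Borel automorphism after discarding a null set. They partition \(Z\). Let \(p_n=\nu(P_n)\) and \(p_\infty=\nu(P_\infty)\), so that \(\sum_n p_n+p_\infty=1\). We then choose disjoint consecutive half-open subintervals \(L_\infty, L_1, L_2,\ldots\) of \([0,1)\) with lengths \(p_\infty,p_1,p_2,\ldots\), skipping the null components. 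Since the lengths sum to \(1\), these intervals cover \([0,1)\) up to at most one accumulation point.

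\emph{The aperiodic part.} If \(p_\infty>0\), the normalized system \((P_\infty,\nu/p_\infty,U)\) is an aperiodic automorphism of a standard space; it is automatically non-atomic. By Arnoux--Ornstein--Weiss \cite{ArnouxOrnsteinWeiss1985} it is metrically equivalent to an infinite IET on \([0,1)\). Rescaling this model affinely onto \(L_\infty\) gives countably many open intervals in \(L_\infty\), with translations, whose images tile \(L_\infty\) modulo null sets. The precise form of the AOW output must be checked against Definition~\ref{def:countable-iem}: a countable family of intervals with full-measure union, each translated, with disjoint images covering the interval. I expect this matching to be the main technical point. If AOW yields only an a.e.-defined piecewise translation on a countable partition into intervals, the images are still pairwise disjoint and cover modulo null sets, because the map is an invertible measure-preserving transformation on a full-measure set. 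Hence translations of intervals give disjointness of the image intervals up to null sets, and for intervals this means disjointness up to endpoints.

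\emph{The periodic parts.} For \(n<\infty\) with \(p_n>0\), the measure \(\nu|_{P_n}\) is non-atomic, because \(\nu\) is. We choose a Borel fundamental domain \(F_n\subset P_n\) meeting every orbit exactly once. In a standard Borel space this can be done by taking, for instance, the point of the orbit that is minimal in a fixed Borel linear order, for example the one obtained from a Borel injection into \(\mathbb R\). Then \(P_n=\bigsqcup_{j=0}^{n-1}U^jF_n\) with \(\nu(U^jF_n)=p_n/n\). The space \((F_n,\nu)\) is a non-atomic standard measure space of mass \(p_n/n\), so it is isomorphic mod \(0\) to an interval of that length. We split \(L_n\) into \(n\) consecutive levels \(L_{n,0},\ldots,L_{n,n-1}\) of length \(p_n/n\), map \(U^jF_n\) onto \(L_{n,j}\) by composing this isomorphism with \(U^{-j}\) and a translation, and let the model translate \(L_{n,j}\) onto \(L_{n,j+1}\), with \(L_{n,n-1}\) sent onto \(L_{n,0}\). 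This is a finite cyclic IET that is conjugate to \(U|_{P_n}\).

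\emph{Assembly.} We define \(\Phi\) piecewise from these conjugacies and \(R\) as the union of the component IETs. The family \(\{U_m\}\) consists of the interiors of all pieces, which is countably many. Their images are disjoint up to endpoints, cover \([0,1]\) mod null sets, and \(\Phi_*\nu=\Leb\). The invariant full-measure sets \(X_0,Y_0\) are obtained by intersecting the countably many component full-measure invariant sets and saturating under the dynamics; a countable union of null sets is null. This proves Theorem~\ref{thm:automorphism-countable-iem}.
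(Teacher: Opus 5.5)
Your proposal is correct and follows essentially the same route as the paper: split off the exact-period sets \(P_n\), apply Arnoux--Ornstein--Weiss to the aperiodic remainder, model each \(P_n\) as a cyclic exchange of \(n\) equal levels over a Borel fundamental domain chosen with a Borel linear order, and place the rescaled blocks consecutively in \([0,1)\). The one step to phrase more carefully is the last: take the invariant full-measure set to be the core \(\bigcap_{m\in\mathbb Z}U^{-m}E\) of the set \(E\) on which the blockwise coordinate is a bijective conjugacy, as the paper does, rather than a saturation of \(E\) under the dynamics.
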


\begin{proof}
For \(n\geq1\), let
\[
P_n=\{z\in Z:U^n z=z\text{ and }U^jz\neq z\text{ for }1\leq j<n\}
\]
be the set of points of exact period \(n\), and put
\[
        A=Z\setminus\bigcup_{n\geq1}P_n.
\]
The sets \(A,P_1,P_2,\ldots\) are Borel, pairwise disjoint and invariant, and \(U|_A\)
is aperiodic.

Suppose first that \(a:=\nu(A)>0\), and normalize \(\nu|_A\) to the probability measure
\(\nu_A=\nu|_A/a\). By the theorem of Arnoux, Ornstein and Weiss
\cite{ArnouxOrnsteinWeiss1985}, the aperiodic system \((A,\nu_A,U|_A)\) is
measure-theoretically isomorphic to an infinite IET \(R_A\) on \([0,1)\). Conjugating by
the affine map \(L_a(u)=au\) gives a countable IET on \([0,a)\), and the resulting
coordinate sends \(\nu|_A\) to Lebesgue measure on that interval. If \(a=0\), this block is
omitted.

Now fix \(n\geq1\) with \(p_n:=\nu(P_n)>0\). Because \(Z\) is standard Borel, choose a
Borel linear order on \(Z\), and let
\[
F_n=\{z\in P_n:z<U^jz\text{ for every }1\leq j<n\}.
\]
Then \(F_n\) contains exactly one point from each period-\(n\) orbit and
\[
        P_n=F_n\sqcup UF_n\sqcup\cdots\sqcup U^{n-1}F_n.
\]
Since \(U\) preserves \(\nu\),
\[
        \nu(F_n)=\frac{p_n}{n}=: \ell_n.
\]
The standard non-atomic finite measure space \((F_n,\nu|_{F_n})\) is measure-isomorphic
modulo null sets to \([0,\ell_n)\) by the standard isomorphism theorem for non-atomic
standard measure spaces \cite{CornfeldFominSinai1982}; let \(\theta_n\) be such an
isomorphism. Divide \(I_n=[0,p_n)\) into
\[
        I_{n,j}=[j\ell_n,(j+1)\ell_n),\qquad 0\leq j<n,
\]
and define
\[
        \Psi_n(U^jz)=\theta_n(z)+j\ell_n.
\]
This conjugates \(U|_{P_n}\) to the cyclic IET
\[
R_n(u)=
\begin{cases}
        u+\ell_n,&u\in I_{n,j},\quad 0\leq j<n-1,\\
        u-(n-1)\ell_n,&u\in I_{n,n-1}.
\end{cases}
\]

Finally, place the non-null intervals corresponding to \(A,P_1,P_2,\ldots\) consecutively
inside \([0,1)\), preserving their lengths, and translate the component models to these
blocks. On each non-null component, choose a full-measure set \(E\) on which the
componentwise coordinate is a measurable bijection and the conjugacy identity holds, and
replace it by
\[
        E_\infty=\bigcap_{m\in\mathbb Z}U^{-m}E.
\]
Because \(U\) is an automorphism, \(E_\infty\) is invariant and has full measure. Its image
under the componentwise coordinate is likewise an invariant full-measure set for the model
IET, and the restricted coordinate is a pointwise conjugacy. Performing this construction on
all non-null components and deleting the resulting countable union of null exceptional sets,
the blockwise coordinate is a metric conjugacy. The aperiodic block contributes countably
many exchange intervals, and each periodic block contributes finitely many; hence the total
family is countable.
\end{proof}

\begin{corollary}[Universal countable IET model]
\label{cor:universal-countable-iem}
Let \(S\) be an interval translation map and let \(\mu\) be a non-atomic
\(S\)-invariant Borel probability measure. Put \(J=\supp\mu\). Then
\((J,\mu,S)\) is metrically equivalent to a countable IET.
\end{corollary}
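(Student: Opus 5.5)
The plan is to combine Theorem~\ref{thm:ae-invertible} with Theorem~\ref{thm:automorphism-countable-iem}, and then check that the metric equivalence obtained on the automorphic core is one for the original system \((J,\mu,S)\) in the sense of the definition of metric equivalence.

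First I would apply Theorem~\ref{thm:ae-invertible} to obtain an invariant full-measure Borel set \(X_0\subset J\) on which \(S:X_0\to X_0\) is a Borel bijection with measurable, measure-preserving inverse. I then equip \(X_0\) with the relative Borel structure and the restriction \(\mu|_{X_0}\). Since \(X_0\) is a Borel subset of the Polish space \([0,1]\), this is a standard Borel probability space. The measure is non-atomic because \(\mu\) is. Thus \((X_0,\mu|_{X_0},S|_{X_0})\) satisfies the hypotheses of Theorem~\ref{thm:automorphism-countable-iem}.

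Theorem~\ref{thm:automorphism-countable-iem} then supplies invariant full-measure sets \(X_1\subset X_0\) and \(Y_1\subset[0,1]\), together with a measurable bijection \(\Phi:X_1\to Y_1\) with measurable inverse. This \(\Phi\) sends \(\mu\) to Lebesgue measure and satisfies \(\Phi\circ S=R\circ\Phi\) for a countable IET \(R\).

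It remains to check that \(X_1\) is \(S\)-invariant and of full measure as a subset of \(J\), not merely of \(X_0\). Invariance is inherited, because \(S\) on \(X_1\) is the original map \(S\). Full measure holds because \(\mu(J\setminus X_1)\le\mu(J\setminus X_0)+\mu(X_0\setminus X_1)=0\). Measurability of \(\Phi\) and \(\Phi^{-1}\) with respect to the Borel structure of \(J\) follows because Borel subsets of \(X_1\) are Borel in \(J\).

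The only real subtlety is one of definitions. On \(J\) the map \(S\) might not send \(J\) into itself off a null set, and the proof of Theorem~\ref{thm:ae-invertible} modified \(S\) on null sets. I will therefore have to verify that \(X_0\) avoids the modification sets \(N_S\) and \(N_R\), so that \(S|_{X_0}\) is the genuine map. If the core from Lemma~\ref{lem:automorphic-core} does not already guarantee this, I would intersect \(X_0\) with \(\bigcap_{m\in\mathbb Z}S^{m}(J\setminus(N_S\cup N_R))\) computed for the automorphism. That set is still invariant and of full measure, so after this intersection the conjugacy concerns the original \(S\).
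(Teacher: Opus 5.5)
Your proposal is correct and follows the paper's proof, which restricts to the invariant automorphic core of Theorem~\ref{thm:ae-invertible} and then applies Theorem~\ref{thm:automorphism-countable-iem}. You also check that the core avoids the null sets \(N_S\) and \(N_R\) where the proof of Theorem~\ref{thm:ae-invertible} replaced \(S\) by \(\widetilde S\); the paper leaves this implicit, and your intersection over \(m\in\mathbb Z\), taken with respect to the automorphism, handles it correctly.
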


\begin{proof}
By Theorem~\ref{thm:ae-invertible}, the system has a full-measure invariant standard Borel
subset on which \(S\) is a non-atomic measure-preserving automorphism. Apply
Theorem~\ref{thm:automorphism-countable-iem}.
\end{proof}

\section{The distribution-coordinate model}\label{sec:distribution-model}

In this section we construct the map that transfers the invariant measure of the
interval translation map to Lebesgue measure. This map will be used to rewrite the
measured system on the support of the invariant measure as a Lebesgue-preserving
interval system. The construction is canonical: it is given by the distribution
function of the invariant measure.

\begin{definition}[Distribution coordinate]
Define
\[
        h:[0,1]\to[0,1],\qquad h(x)=\mu([0,x]).
\]
\end{definition}

\begin{lemma}[Distribution coordinate is a metric isomorphism]\label{lem:h-isomorphism}
Let $\mu$ be a non-atomic Borel probability measure on $[0,1]$ and let $J=\supp\mu$. Then $h$ is continuous, non-decreasing, onto, and $h_*\mu=\Leb$. Moreover, there exist full-measure Borel sets $J_*\subset J$ and $Y_*\subset[0,1]$ such that
\[
        h:J_*\to Y_*
\]
is a measurable bijection with measurable inverse. In particular, $h$ is a measure-space isomorphism from $(J,\mu)$ to $([0,1],\Leb)$ modulo null sets.
\end{lemma}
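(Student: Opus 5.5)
The plan is to establish the elementary properties of \(h\) first, then build an explicit inverse on a large set, and only at the end remove the null sets where injectivity fails.

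\emph{Elementary properties.} Continuity of \(h\) comes from non-atomicity: \(h(x)-h(x^-)=\mu(\{x\})=0\), and right-continuity follows from continuity of measure from above. Monotonicity is immediate from the definition, with \(h(0)=\mu(\{0\})=0\) and \(h(1)=1\), so \(h\) is onto by the intermediate value theorem. To get \(h_*\mu=\Leb\), I will check it on intervals \([0,y]\). Put \(x_y=\max h^{-1}(y)\); this maximum exists because \(h^{-1}(y)\) is a closed interval by continuity and monotonicity. Then \(h^{-1}([0,y])=[0,x_y]\), so \(\mu(h^{-1}[0,y])=h(x_y)=y\). Since these intervals form a \(\pi\)-system generating the Borel sets, uniqueness of measures gives \(h_*\mu=\Leb\).

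\emph{Where injectivity fails.} The fibres \(h^{-1}(y)\) are closed intervals, and \(h\) is constant on a nondegenerate fibre. Only countably many fibres are nondegenerate, since they are disjoint intervals of positive length. Call the set of their values \(D\); it is countable. A nondegenerate fibre \([a,b]\) satisfies \(\mu([a,b])=0\), so its interior \((a,b)\) misses \(J\). Hence \(J\cap[a,b]\subset\{a,b\}\), and \(h\) can fail to be injective on \(J\) only over \(D\).

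\emph{The full-measure sets.} Define \(Y_*=[0,1]\setminus D\) and \(J_*=J\setminus h^{-1}(D)\). Since \(D\) is countable, \(\Leb(D)=0\), and therefore \(\mu(h^{-1}(D))=\Leb(D)=0\). As \(\mu(J)=1\), the set \(J_*\) has full measure. Injectivity of \(h\) on \(J_*\) follows from the previous paragraph. For surjectivity onto \(Y_*\): when \(y\notin D\) the fibre is a single point \(x\), and I need \(x\in J\). Here I will use that the gaps of \(J\) are open intervals on which \(h\) is constant. A point of a gap lying strictly inside \((0,1)\) sits in the interior of a nondegenerate fibre, so its \(h\)-value lies in \(D\). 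The boundary gaps \([0,b)\) and \((a,1]\) need separate treatment: they map to the values \(0\) and \(1\), which are either in \(D\) or form a null set that I simply delete from \(Y_*\) as well. With this adjustment \(h:J_*\to Y_*\) is a bijection. I expect this bookkeeping of boundary gaps and endpoints to be the main fiddly point.

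\emph{Measurability of the inverse.} The inverse is \(g(y)=\max h^{-1}(y)\) restricted to \(Y_*\), where it agrees with the generalized inverse. Since \(g\) is non-decreasing on \([0,1]\), it is Borel. Alternatively, \(h\) is a continuous injection on the Borel set \(J_*\), so by the Lusin--Souslin theorem its image is Borel and its inverse is Borel. Finally, \(h_*\mu=\Leb\) restricted to these sets gives the measure-space isomorphism modulo null sets.
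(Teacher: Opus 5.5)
Your proof is correct. The first half (continuity, surjectivity, and $h_*\mu=\Leb$ via the largest point of the fibre over $y$) matches the paper's argument with $q(t)=\sup\{x:h(x)\le t\}$. The difference lies in how the exceptional null sets are chosen.

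The paper works on the source side. It removes from $J$ the countable set of gap endpoints. Injectivity on the remainder holds because two support points with equal $h$-value bound a $\mu$-null open interval, and so are the two endpoints of a gap. The paper then simply sets $Y_*=h(J_*)$, using Lusin--Souslin for Borelness of $Y_*$ and of the inverse, and $h_*\mu=\Leb$ for $\Leb(Y_*)=1$.

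You work on the target side. Removing the countable set $D$ of plateau values makes $Y_*=[0,1]\setminus D$ explicitly co-countable. It also gives the inverse concretely as the non-decreasing, hence Borel, map $y\mapsto\max h^{-1}(y)$, so no descriptive set theory is needed. The price is the surjectivity check that a singleton fibre lies in $J$, which you carry out correctly.

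The two constructions in fact yield the same sets, since $J\cap h^{-1}(D)$ is exactly the set of gap endpoints lying in $J$.

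Finally, the boundary gaps you flag as fiddly need no separate treatment. A gap $[0,b)$ or $(a,1]$ is a nondegenerate interval on which $h$ is constant. So $0$ or $1$ lies in $D$ automatically whenever such a gap exists, and your proposed extra deletion is vacuous.
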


\begin{proof}
Monotonicity is immediate. Since $\mu$ is non-atomic, $h$ has no jumps. More explicitly, if $x_m\downarrow x$, then $[0,x_m]\downarrow[0,x]$, so $h(x_m)\to h(x)$. If $x_m\uparrow x$, then $[0,x_m]\uparrow[0,x)$, so
\[
        h(x_m)\to\mu([0,x))=\mu([0,x])=h(x).
\]
Thus $h$ is continuous. We view $\mu$ as a probability measure on the compact interval $[0,1]$; since $\mu$ is non-atomic, $h(0)=\mu(\{0\})=0$ and $h(1)=1$. Continuity therefore gives that $h$ is onto.

To prove $h_*\mu=\Leb$, let $0\leq t\leq1$ and set
\[
        q(t)=\sup\{x\in[0,1]:h(x)\leq t\}.
\]
By continuity of $h$, $h(q(t))=t$. The set $h^{-1}([0,t])$ is an initial interval with endpoint $q(t)$, up to endpoints, and endpoints have $\mu$-measure zero. Hence
\[
        \mu(h^{-1}([0,t]))=\mu([0,q(t)])=h(q(t))=t=\Leb([0,t]).
\]
The equality extends from intervals $[0,t]$ to all Borel sets by the monotone class theorem. Thus $h_*\mu=\Leb$.

It remains to prove injectivity modulo null sets on the support. Since \(J\) is compact in \([0,1]\), its complement
\[
        [0,1]\setminus J
\]
is relatively open in \([0,1]\). Therefore it is the union of its
pairwise disjoint connected components, each of which is a relatively
open interval in \([0,1]\). We call these components the gaps of \(J\).
Thus every gap has one of the forms
\[
        [0,a),\qquad
        (a,b),\qquad
        (b,1],
\]
with the obvious omission of the boundary cases when \(0,1\in J\).
Let \(E\) be the countable set of all gap endpoints and put
\[
        J_*=J\setminus E.
\]
Since $\mu$ is non-atomic, $\mu(E)=0$, so $J_*$ has full $\mu$-measure in $J$.

We claim that $h$ is injective on $J_*$. Suppose $x,y\in J_*$, $x<y$, and $h(x)=h(y)$. Then
\[
        \mu((x,y))=0.
\]
If $(x,y)$ met $J$, then some point $z\in (x,y)\cap J$ would have every neighborhood of positive $\mu$-measure. Choosing such a neighborhood inside $(x,y)$ would contradict $\mu((x,y))=0$. Hence $(x,y)\cap J=\emptyset$, so $(x,y)$ is contained in a gap of $J$. Since $x,y\in J$, they must be the two endpoints of that gap. This contradicts $x,y\notin E$. Thus $h$ is injective on $J_*$.

The image $Y_*=h(J_*)$ is Borel by the Lusin--Souslin theorem \cite{Kechris1995}, because $h$ is Borel and injective on the Borel set $J_*$. The inverse $h^{-1}:Y_*\to J_*$ is also Borel. Since $h_*\mu=\Leb$, the set $Y_*$ has full Lebesgue measure. This proves the measure-space isomorphism statement.
\end{proof}

\begin{definition}[The induced interval map]\label{def:induced-T}
Let $X_0\subset J$ be the invariant full-measure set from Theorem~\ref{thm:ae-invertible}, and let $E_0$ be the countable set consisting of the branch endpoints and the gap endpoints of $J$. Remove from $X_0$ the full orbit of $E_0\cap X_0$ under the automorphism $S:X_0\to X_0$, namely
\[
        \bigcup_{m\in\mathbb Z}S^m(E_0\cap X_0).
\]
The resulting set will be denoted by $X$. Then $X$ is invariant, has full $\mu$-measure, and $h$ is injective on $X$.

Put
\[
        Y=h(X).
\]
Then $Y$ has full Lebesgue measure. Let
\[
        q:Y\to X
\]
be the measurable inverse of $h:X\to Y$. Define
\[
        T:Y\to Y,
        \qquad
        T(u)=h(S(q(u))).
\]
Fix \(u_*\in[0,1]\) and set \(T(u)=u_*\) for \(u\notin Y\). Since \(Y\) is Borel,
this gives a Lebesgue-measurable extension of \(T\) to \([0,1]\).
\end{definition}

\begin{theorem}[The distribution coordinate gives a metric model]\label{thm:metric-model}
The systems $(J,\mu,S)$ and $([0,1],\Leb,T)$ are metrically equivalent modulo null sets. In particular, $T$ preserves Lebesgue measure and
\[
        T\circ h=h\circ S
\]
on $X$.
\end{theorem}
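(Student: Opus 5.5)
The plan is to take \(\Phi:=h|_X:X\to Y\) as the conjugating map and to check, one by one, the requirements in the definition of metric equivalence. Most of the work is already done by Lemma~\ref{lem:h-isomorphism} and Definition~\ref{def:induced-T}. The only point that needs real care is that the set \(X\) produced in Definition~\ref{def:induced-T} is Borel, invariant, of full measure, and contained in \(J_*\).

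First, the set \(X\). The orbit \(\bigcup_{m\in\mathbb Z}S^m(E_0\cap X_0)\) is a countable union of images of a countable set under the bijection \(S|_{X_0}\). It is therefore countable, hence Borel, and \(\mu\)-null because \(\mu\) is non-atomic. Removing it from \(X_0\) leaves a set \(X\) with the following properties:
\begin{itemize}
\item \(X\) is Borel and has full measure;
\item \(X\) is invariant under both \(S\) and \(S^{-1}\), since the removed set is a full orbit and \(X_0\) is invariant by Theorem~\ref{thm:ae-invertible};
\item \(X\cap E_0=\emptyset\).
\end{itemize}
Since \(X\subset J\) avoids all gap endpoints, we get \(X\subset J_*\), so \(h\) is injective on \(X\) by the argument in Lemma~\ref{lem:h-isomorphism}. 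The Lusin--Souslin theorem then shows that \(Y=h(X)\) is Borel and that \(q=(h|_X)^{-1}\) is Borel. Finally, \(\Leb(Y)=\mu(h^{-1}(Y))\geq\mu(X)=1\) because \(h_*\mu=\Leb\).

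Second, the conjugacy. For \(u\in Y\) we have \(q(u)\in X\), so \(S(q(u))\in X\) and hence \(T(u)=h(S(q(u)))\in Y\); thus \(T\) maps \(Y\) into \(Y\). For \(x\in X\), \(T(h(x))=h(S(q(h(x))))=h(Sx)\), which is exactly \(T\circ h=h\circ S\) on \(X\).

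\(T\) is a bijection of \(Y\), with inverse \(h\circ S^{-1}\circ q\); this uses that \(X\) is \(S^{-1}\)-invariant. It is also Borel, as a composition of Borel maps. The set \(Y\) is invariant, and the measures match: \((h|_X)_*(\mu|_X)=\Leb|_Y\), since for a Borel set \(B\subset Y\) we have \(\mu(q(B))=\mu(h^{-1}(B)\cap X)=\Leb(B)\), using that \(X\) has full measure.

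Lebesgue invariance of \(T\) then follows by transport. For Borel \(B\subset Y\),
\[
\Leb(T^{-1}B)=\mu\bigl(q(T^{-1}B)\bigr)=\mu\bigl(S^{-1}(q(B))\cap X\bigr)=\mu(q(B))=\Leb(B).
\]
The redefinition of \(T\) off \(Y\) only changes \(T\) on a Lebesgue-null set. With \(X_0:=X\) and \(Y_0:=Y\) in the definition of metric equivalence, this completes the plan.

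The main obstacle is the bookkeeping in the first step: checking that removing the countable orbit keeps \(X\) invariant in both directions and places it inside \(J_*\). Without this, \(T\) would not map \(Y\) into itself and \(h\) would not be injective on \(X\). Everything after that is transport of structure through the Borel bijection \(h|_X\).
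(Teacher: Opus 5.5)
Your proposal is correct and follows essentially the same route as the paper: you take \(h|_X\) as the conjugacy, read off \(T\circ h=h\circ S\) on \(X\) from the definition of \(T\), and obtain Lebesgue invariance by transporting the \(S\)-invariance of \(\mu\) through \(h\), which is the paper's chain of equalities. The only difference is that you spell out the bookkeeping the paper leaves to ``by construction'' (that \(X\) is Borel, invariant in both directions, contained in \(J_*\), and that \(T(Y)\subset Y\)), which is a welcome addition; in the invariance computation, intersect \(T^{-1}B\) with \(Y\) before applying \(q\), as the paper does.
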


\begin{proof}
By construction, $h:X\to Y$ is a measurable bijection with measurable inverse $q$, and $h_*\mu=\Leb$. For $x\in X$,
\[
        T(h(x))=h(S(q(h(x))))=h(Sx).
\]
Thus $T\circ h=h\circ S$ on $X$. Since $X$ and $Y$ are invariant full-measure sets and $h$ is a measure-space isomorphism between them, the systems are metrically equivalent.

Finally, the values of $T$ on the null set $[0,1]\setminus Y$ do not affect Lebesgue measure. For every Borel set $B\subset[0,1]$, using the conjugacy identity on the invariant full-measure set $X$ gives
\[
\begin{aligned}
\Leb(T^{-1}B)
&=\Leb(T^{-1}B\cap Y) \\
&=\mu(h^{-1}(T^{-1}B\cap Y)) \\
&=\mu(S^{-1}(h^{-1}(B\cap Y))) \\
&=\mu(h^{-1}(B\cap Y)) \\
&=\Leb(B\cap Y)=\Leb(B).
\end{aligned}
\]
Hence $T$ preserves Lebesgue measure.
\end{proof}

\section{Branch defects and a finite-IET criterion}

Theorem~\ref{thm:metric-model} reduces the problem to understanding when the induced map $T$ is a finite IET. This section gives a general sufficient finite-piece criterion in terms of branch defect functions.

For a branch $I_k$, define
\[
        \tau_k(x)=x+c_k,
        \qquad x\in I_k.
\]
Thus $S=\tau_k$ on $I_k$.

\begin{definition}[Branch defect]
For $x\in I_k$, define
\[
        \Delta_k(x)=h(\tau_k x)-h(x)=h(x+c_k)-h(x).
\]
The function $\Delta_k$ measures the failure of the distribution coordinate to turn the branch translation $x\mapsto x+c_k$ into a literal translation by a constant.
\end{definition}

\begin{lemma}[Local translation criterion]\label{lem:local-translation-criterion}
Let $U\subset[0,1]$ be an open interval such that $q(U\cap Y)$ is contained in a single continuity branch $I_k$ and such that $\Delta_k$ is constant on $q(U\cap Y)$. Then $T$ is a translation on $U$ modulo Lebesgue-null sets.
\end{lemma}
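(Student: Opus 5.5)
The argument is a direct unwinding of the definitions, so the plan is to compute \(T\) pointwise on \(U\cap Y\) and use only the conjugacy of Theorem~\ref{thm:metric-model}.

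Let \(d\) be the constant value of \(\Delta_k\) on \(q(U\cap Y)\). The set \(U\cap Y\) has full Lebesgue measure in \(U\), because \(Y\) has full Lebesgue measure by Definition~\ref{def:induced-T}. So it is enough to show that
\[
        T(u)=u+d \qquad\text{for every } u\in U\cap Y .
\]

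Fix \(u\in U\cap Y\) and set \(x=q(u)\in X\).

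\begin{itemize}
\item[(a)] Since \(q\) inverts \(h\) on \(Y\), we have \(h(x)=u\).
\item[(b)] By hypothesis \(x\in I_k\), so \(Sx=\tau_k x=x+c_k\).
\item[(c)] By the definition of \(T\), and then of \(\Delta_k\),
\[
        T(u)=h(S(q(u)))=h(x+c_k)=h(x)+\Delta_k(x)=u+d .
\]
\end{itemize}

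This already gives the translation identity at every point of \(U\cap Y\). Hence \(T\) agrees with \(u\mapsto u+d\) on \(U\) outside the Lebesgue-null set \(U\setminus Y\).

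There are two small things to check. First, \(T(u)\in Y\) holds automatically, since \(X\) is \(S\)-invariant and so \(Sx\in X\) and \(h(Sx)\in Y\); the argument does not actually need this. Second, \(\Delta_k(x)\) is meaningful: it is defined for \(x\in I_k\), and \(x+c_k\in[0,1)\) by the definition of an interval translation map, so \(h(x+c_k)\) makes sense.

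I expect no real obstacle here. The only subtlety is interpretive: "translation modulo null sets" should mean pointwise equality on the full-measure subset \(U\cap Y\) of \(U\), with no claim about the values of \(T\) on \(U\setminus Y\), where \(T\) was set to the arbitrary constant \(u_*\). With that reading, the computation in step (c) is the whole proof.
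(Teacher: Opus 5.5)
Your proposal is correct and is essentially the paper's own proof. You evaluate \(T(u)=h(x+c_k)=h(x)+\Delta_k(x)=u+d\) pointwise for \(u\in U\cap Y\) with \(x=q(u)\), and you use that \(Y\) has full Lebesgue measure to get the statement modulo null sets; the paper argues in exactly this way.
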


\begin{proof}
Let $u\in U\cap Y$ and put $x=q(u)$. Since $x\in I_k$,
\[
        T(u)=h(Sx)=h(x+c_k)=h(x)+\Delta_k(x)=u+\Delta_k(x).
\]
By hypothesis $\Delta_k$ is constant on $q(U\cap Y)$, say equal to $d$. Hence
\[
        T(u)=u+d
\]
for every $u\in U\cap Y$. Since $Y$ has full Lebesgue measure, this says that $T$ agrees almost everywhere on $U$ with a translation.
\end{proof}

\begin{definition}[Finite branch-defect condition]\label{def:finite-defect}
We say that $S$ satisfies the \emph{finite branch-defect condition} with respect to $\mu$ if there is a finite set
\[
        \cC\subset[0,1]
\]
containing all branch cut values $h(t_0),\ldots,h(t_n)$ such that, for every connected component $U$ of $[0,1]\setminus\cC$, the set $q(U\cap Y)$ is contained in one continuity branch $I_k$ and the defect $\Delta_k$ is constant on $q(U\cap Y)$.
\end{definition}

\begin{theorem}[Finite branch-defect criterion]\label{thm:finite-defect}
If the finite branch-defect condition holds, then $(J,\mu,S)$ is metrically equivalent to a finite IET.
\end{theorem}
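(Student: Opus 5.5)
By Theorem~\ref{thm:metric-model}, \((J,\mu,S)\) is metrically equivalent to \(([0,1],\Leb,T)\). It therefore suffices to show that \(T\) coincides Lebesgue-almost everywhere with a finite IET. Let \(\cC=\{0=s_0<s_1<\cdots<s_N=1\}\) be the finite set in Definition~\ref{def:finite-defect}. If \(0\) or \(1\) is missing from \(\cC\), we add it; the components of the complement then only split further, and the hypotheses are inherited by subintervals. The components of \([0,1]\setminus\cC\) are the open intervals \(U_j=(s_{j-1},s_j)\). For each \(j\), Lemma~\ref{lem:local-translation-criterion} gives a constant \(d_j\), namely the common value of \(\Delta_k\) on \(q(U_j\cap Y)\), with
\[
T(u)=u+d_j \qquad\text{for every } u\in U_j\cap Y .
\]
Any component with \(U_j\cap Y=\emptyset\) has length zero, since \(Y\) has full measure, so none occurs. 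Define \(R(u)=u+d_j\) on \([s_{j-1},s_j)\). Then \(R=T\) almost everywhere, and \(R\) is a finite piecewise translation.

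The main step is to show that \(R\) is a genuine IET. The translated intervals \(U_j+d_j\) must lie in \([0,1]\), be pairwise disjoint modulo endpoints, and cover \([0,1]\).

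\emph{Containment.} For \(u\in U_j\cap Y\) we have \(u+d_j=T(u)\in[0,1]\). Since \(U_j\cap Y\) is dense in \(U_j\), taking closures gives \(\overline{U_j}+d_j\subset[0,1]\).

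\emph{Disjointness.} We use measure preservation, namely Theorem~\ref{thm:metric-model}, which says \(\Leb(T^{-1}B)=\Leb(B)\). Let \(B=\bigcup_{i<j}(U_i+d_i)\cap(U_j+d_j)\) be the union of pairwise overlaps. Overlaps of open intervals are open, so if \(B\neq\emptyset\) then \(\Leb(B)>0\). Each point of \(B\) has at least two preimages under \(R\). Hence
\[
\Leb(R^{-1}B)=\sum_j \Leb\bigl(B\cap(U_j+d_j)\bigr)\geq 2\Leb(B),
\]
because translations preserve Lebesgue measure. On the other hand, \(R^{-1}B=T^{-1}B\) modulo a null set, so \(\Leb(R^{-1}B)=\Leb(B)\). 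This forces \(\Leb(B)=0\), and therefore \(B=\emptyset\).

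\emph{Covering.} The images are now disjoint and \(\sum_j|U_j|=1\), so their total length is \(1\). Their union is thus a subset of \([0,1]\) of full measure, and its complement consists of at most finitely many points, namely endpoints.

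Hence \(R\) is a finite IET in the sense of the definition above: it is injective except at branch endpoints, and its images tile \([0,1]\) modulo endpoints. It remains to pass from \(T\) to \(R\) at the level of metric equivalence. We want an invariant full-measure set on which \(T=R\). Take \(Y'=\bigcap_{m\in\mathbb Z}T^{-m}(Y\setminus\cC)\), where negative powers are taken via the inverse of \(T\) on \(Y\), which exists because \(T\) is conjugate through \(h\) to the automorphism \(S|_X\). The set \(Y'\) has full measure and is invariant under both \(T\) and \(R\).

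The step I expect to need the most care is the disjointness argument. It requires converting the almost-everywhere identity \(T=R\) into an identity of preimage measures, and it uses that the overlaps are open. Everything else is bookkeeping on null sets, composed with the metric equivalence of Theorem~\ref{thm:metric-model} via \(h\) restricted to \(h^{-1}(Y')\cap X\).
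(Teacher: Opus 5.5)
Your proof is correct and follows the paper's argument. Both apply Lemma~\ref{lem:local-translation-criterion} on the finitely many components, show the translated intervals stay in \([0,1]\), rule out overlaps because \(\Leb(T^{-1}B)\ge 2\Leb(B)\) would contradict Lebesgue invariance, and then get full-measure covering. Your explicit construction of the invariant full-measure set \(Y'\), on which \(T=R\), is a more careful version of the null-set modification that the paper handles only in passing.
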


\begin{proof}
Let $\cC$ be the finite set from Definition~\ref{def:finite-defect}. The connected
components of $[0,1]\setminus\cC$ form a finite collection of open intervals. By
Lemma~\ref{lem:local-translation-criterion}, on each such component \(U\) the map \(T\)
agrees almost everywhere with a translation \(u\mapsto u+d_U\). Necessarily
\(U+d_U\subseteq[0,1]\) up to endpoints: otherwise a nonempty open subinterval of \(U\)
would be sent outside \([0,1]\), contradicting that \(T\) is \([0,1]\)-valued almost
everywhere on \(U\). Redefining \(T\) on the finite set \(\cC\) and on a Lebesgue-null
set, we obtain a finite interval translation map of \([0,1]\). Since this modification is
confined to a null set, it does not change Lebesgue preservation.

By Theorem~\ref{thm:metric-model}, the resulting representative preserves Lebesgue
measure. A finite interval translation map preserving Lebesgue measure is an
interval exchange transformation, after choosing half-open
representatives and modifying only on endpoints.

Indeed, suppose that the image intervals of two distinct continuity
intervals \(U_i\) and \(U_j\) have an intersection containing a Borel
set \(B\) of positive Lebesgue measure. Since \(T\) is a translation on
each of \(U_i\) and \(U_j\), the preimage \(T^{-1}(B)\) contains two
disjoint sets, one in each branch, each having Lebesgue measure
\(\operatorname{Leb}(B)\). Therefore
\[
        \operatorname{Leb}(T^{-1}(B))
        \geq
        2\operatorname{Leb}(B)
        >
        \operatorname{Leb}(B),
\]
contradicting the Lebesgue invariance of \(T\).

Thus distinct branch images overlap only in Lebesgue-null sets. Since
\(T\) preserves Lebesgue probability measure, the union of the branch
images has full Lebesgue measure. Hence \(T\) is a finite interval
exchange transformation modulo endpoints. The metric equivalence with
\((J,\mu,S)\) follows from Theorem~\ref{thm:metric-model}.
\end{proof}

\section{Active gaps and finite active obstructions}

The finite branch-defect condition is direct but still abstract. This section gives a concrete geometric sufficient condition in terms of gaps of $J=\supp\mu$.

\begin{definition}[Branchwise nonsingularity on the support]\label{def:branch-nonsingularity}
We say that $\mu$ is \emph{branchwise nonsingular on the support} if for every branch $I_k$ and every Borel set $N\subset J\cap I_k$ with $\mu(N)=0$, one has
\[
        \mu(\tau_k(N))=0.
\]
Equivalently, the translation $\tau_k:x\mapsto x+c_k$ sends $\mu$-null subsets of $J\cap I_k$ to $\mu$-null sets.
\end{definition}

\begin{remark}
This condition is not a substitute for invariance. It controls a different issue: thin subsets of the support. Invariance alone controls full preimages under $S$, while branchwise nonsingularity controls the direct image of null subsets inside a single branch. Without such a condition, a null subset of a singular support could in principle be translated onto a set of positive $\mu$-measure, producing a singular branch defect not visible from the relatively open gaps of $J$.
\end{remark}

\begin{definition}[Branchwise gaps and active gaps]\label{def:active-gap}
For a branch $I_k=[t_{k-1},t_k)$, write
\[
        I_k^\circ=(t_{k-1},t_k).
\]
A \emph{branchwise gap} in the branch $I_k$ is a connected component
\[
        G=(a,b)
\]
of $I_k^\circ\setminus J$. Thus branch endpoints are not included in branchwise gaps; their $h$-values are treated separately as ordinary branch cuts.

Since $G$ is contained in one branch, $S(G)=\tau_k(G)=G+c_k$ is an open interval. We say that $G$ is \emph{active} if
\[
        \mu(\tau_k(G))>0.
\]
The associated active-gap cut is the single value
\[
        h(G):=h(a)=h(b),
\]
where equality holds because $G\cap J=\emptyset$.

The active-gap cut set is
\[
        \cC_{\mathrm{gap}}
        =\{h(G):G\text{ is an active branchwise gap for some }k\}.
\]
\end{definition}

\begin{remark}
If $G=(a,b)\subset I_k^\circ\setminus J$, then $G$ is active exactly when $\tau_k(G)$ meets the support in positive $\mu$-measure. Because $\tau_k(G)$ is open and $J=\supp\mu$, this is equivalent to $\tau_k(G)\cap J\neq\emptyset$.
\end{remark}

\begin{lemma}[Gap-free intervals transport measure branchwise]\label{lem:branch-transport-away-gaps}
Assume branchwise nonsingularity on the support. Let $a<b$ be points of $X$ lying in the same branch $I_k$. Suppose that every branchwise gap $G$ contained in $(a,b)$ is inactive. Then
\[
        \mu((a+c_k,b+c_k))=\mu((a,b)).
\]
Consequently
\[
        \Delta_k(a)=\Delta_k(b),
\]
up to endpoint conventions, which are irrelevant because $\mu$ is non-atomic.
\end{lemma}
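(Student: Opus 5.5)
The plan is to split the open interval \((a,b)\) into its part inside the support and its gap part, and to handle each piece separately. The support part is controlled by direct-image invariance on the invertible core together with branchwise nonsingularity. The gap part is controlled by the assumption that every gap in \((a,b)\) is inactive. Since \(a<b\) lie in the same branch \(I_k\), which is an interval, \((a,b)\subset I_k^\circ\). Since \(a,b\in X\subset J\), every connected component of \((a,b)\setminus J\) is an open interval whose endpoints lie in \([a,b]\cap J\). Such a component is therefore a full connected component of \(I_k^\circ\setminus J\), that is, a branchwise gap \(G\subset(a,b)\) in the sense of Definition~\ref{def:active-gap}. There are at most countably many of these. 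This gives the disjoint decomposition
\[
(a,b)=A\sqcup\bigsqcup_{G\subset(a,b)}G,\qquad A:=J\cap(a,b).
\]

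Applying \(\tau_k\), which is injective and agrees with \(S\) on \(I_k\), we get
\[
\tau_k((a,b))=(a+c_k,b+c_k)=\tau_k(A)\sqcup\bigsqcup_{G}\tau_k(G).
\]
Each \(\tau_k(G)\) is an open interval, and by hypothesis \(\mu(\tau_k(G))=0\). Countable subadditivity then gives \(\mu(\tau_k((a,b)))=\mu(\tau_k(A))\); note that \(\tau_k(A)\) is Borel, being a translate of a Borel set. On the other side, \(\mu((a,b))=\mu(A)\) because the gaps are \(\mu\)-null. It therefore suffices to prove \(\mu(\tau_k(A))=\mu(A)\).

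For this I split \(A=(A\cap X_0)\sqcup(A\setminus X_0)\), with \(X_0\) the core from Theorem~\ref{thm:ae-invertible}. The set \(A\cap X_0\) is a Borel subset of \(X_0\), and there \(S=\tau_k\), so Corollary~\ref{cor:direct-core} gives \(\mu(\tau_k(A\cap X_0))=\mu(A\cap X_0)=\mu(A)\). The remainder \(A\setminus X_0\) is a \(\mu\)-null Borel subset of \(J\cap I_k\). Branchwise nonsingularity (Definition~\ref{def:branch-nonsingularity}) then yields \(\mu(\tau_k(A\setminus X_0))=0\). Combining the two pieces gives \(\mu(\tau_k(A))=\mu(A)\), which is the displayed identity.

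For the consequence, non-atomicity gives \(h(b)-h(a)=\mu((a,b))\) and \(h(b+c_k)-h(a+c_k)=\mu((a+c_k,b+c_k))\). Subtracting yields \(\Delta_k(b)-\Delta_k(a)=0\).

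The only delicate point I expect is the first step: verifying that the components of \((a,b)\setminus J\) are exactly branchwise gaps contained in \((a,b)\). This uses \(a,b\in J\), so that no gap straddles an endpoint, and \((a,b)\subset I_k^\circ\), so that no gap runs into a branch cut. Both facts are immediate from \(X\subset J\) and convexity of \(I_k\). Everything else is a direct application of Corollary~\ref{cor:direct-core} and the nonsingularity hypothesis.
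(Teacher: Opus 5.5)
Your proposal is correct and follows essentially the same route as the paper. Both split \((a,b)\) into \(J\cap(a,b)\) and its gaps, dispose of the gap images by inactivity and countable subadditivity, and handle the support part via Corollary~\ref{cor:direct-core} on the core together with branchwise nonsingularity on the null remainder. Three of your steps are small refinements of details the paper leaves implicit or states more cautiously: checking that the components of \((a,b)\setminus J\) are genuine branchwise gaps (using \(a,b\in J\) and \((a,b)\subset I_k^\circ\)), observing that the translated pieces are disjoint, and splitting along \(X_0\) rather than \(X\) so that the corollary's hypothesis is matched exactly.
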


\begin{proof}
Let
\[
        A=(a,b)\cap J,
        \qquad
        U=(a,b)\setminus J.
\]
The open set $U$ is a countable disjoint union of branchwise gaps $G_r$ contained in $(a,b)$. By hypothesis every such $G_r$ is inactive, so
\[
        \mu(\tau_k(G_r))=0
\]
for every $r$. Hence, by countable subadditivity,
\[
        \mu(\tau_k(U))=0.
\]

We now treat the support part. Let $X$ be the invariant full-measure set from Definition~\ref{def:induced-T}. Since $A\setminus X$ is a $\mu$-null subset of $J\cap I_k$, branchwise nonsingularity gives
\[
        \mu(\tau_k(A\setminus X))=0.
\]
On $A\cap X$, the branch translation is exactly $S$, and $S:X\to X$ is a measure-preserving bijection. Therefore, by Corollary~\ref{cor:direct-core},
\[
        \mu(\tau_k(A\cap X))=\mu(S(A\cap X))=\mu(A\cap X)=\mu(A).
\]
Combining these two observations,
\[
        \mu(\tau_k(A))=\mu(A).
\]

Because $\tau_k$ is a translation on the whole branch,
\[
        (a+c_k,b+c_k)=\tau_k((a,b))=\tau_k(A)\cup\tau_k(U).
\]
The union need not be disjoint, but $\tau_k(U)$ has $\mu$-measure zero, and $\mu(\tau_k(A))=\mu(A)$. Thus
\[
        \mu((a+c_k,b+c_k))=\mu(A)=\mu((a,b)),
\]
since $U$ is disjoint from $J$ and hence has $\mu$-measure zero.

Finally, since $\mu$ has no atoms,
\[
\begin{aligned}
\Delta_k(b)-\Delta_k(a)
&=\bigl(h(b+c_k)-h(a+c_k)\bigr)-\bigl(h(b)-h(a)\bigr)\\
&=\mu((a+c_k,b+c_k))-\mu((a,b))=0.
\end{aligned}
\]
Hence $\Delta_k(a)=\Delta_k(b)$.
\end{proof}

\begin{lemma}[Avoiding branch cuts keeps one branch]\label{lem:avoid-branch-cuts}
Let $\cC$ be a set containing $h(t_j)$ for every branch endpoint $t_j$. Let $U$ be a connected component of $[0,1]\setminus\cC$. If $u<v$ are points of $U\cap Y$ and $a=q(u)$, $b=q(v)$, then no branch endpoint lies in $(a,b)$. Consequently $a$ and $b$ lie in the same continuity branch.
\end{lemma}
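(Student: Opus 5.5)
The argument is by contradiction and uses only the monotonicity of \(h\) together with the fact that \(U\) is an interval avoiding every branch-cut value. Since \(u,v\in Y\) and \(q\) is the inverse of \(h\) on \(X\), we have \(h(a)=u\) and \(h(b)=v\). Because \(h\) is non-decreasing and \(u<v\), we cannot have \(a\geq b\), so \(a<b\).

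Suppose some branch endpoint \(t_j\) satisfied \(a<t_j<b\). Monotonicity of \(h\) gives
\[
        u=h(a)\leq h(t_j)\leq h(b)=v,
\]
so \(h(t_j)\in[u,v]\). The value \(h(t_j)\) belongs to \(\cC\), so it does not lie in \(U\). However, \(U\) is a connected component of \([0,1]\setminus\cC\), hence an interval, and it contains \(u\) and \(v\). Therefore \([u,v]\subset U\), which contradicts \(h(t_j)\notin U\). Consequently no branch endpoint lies in \((a,b)\).

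The one point needing care is the boundary case where \(h(t_j)\) equals \(u\) or \(v\). This case is already covered, because \(u\) and \(v\) themselves lie in \(U\) and so cannot equal a value in \(\cC\). Equality in the monotone inequalities therefore causes no difficulty, and no appeal to \(a,b\in X\) avoiding gap endpoints is needed for this step.

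For the conclusion, the branches \(I_k=[t_{k-1},t_k)\) partition \(I\). Let \(I_k\) be the branch containing \(a\), so \(t_{k-1}\leq a<t_k\). If \(b\notin I_k\), then \(b\geq t_k\). Here \(b\neq t_k\), since \(X\) excludes branch endpoints by Definition~\ref{def:induced-T}; so \(b>t_k\), and then \(t_k\in(a,b)\), a contradiction. Hence \(a\) and \(b\) lie in the same continuity branch.
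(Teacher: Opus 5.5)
Your proof is correct and follows the paper's argument: monotonicity of $h$ places $h(t_j)$ between $u=h(a)$ and $v=h(b)$, which contradicts $U\cap\cC=\emptyset$. You also improve on it slightly. Because $U$ is an interval containing $u$ and $v$, you get $[u,v]\subset U$, so the paper's extra step proving the inequalities strict (via the removal of gap endpoints from $X$) is unnecessary. Your use of $b\neq t_k$ for the final half-open-branch conclusion, which the paper leaves implicit, is also correct.
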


\begin{proof}
Suppose, to the contrary, that some endpoint $t_j$ lies in $(a,b)$. Since $h$ is non-decreasing,
\[
        u=h(a)\le h(t_j)\le h(b)=v.
\]
The inequalities are in fact strict. If, for instance, $h(t_j)=h(a)$, then $\mu((a,t_j))=0$. Hence $(a,t_j)\cap J=\emptyset$; otherwise an open neighborhood of a support point contained in $(a,t_j)$ would have positive $\mu$-measure. Thus $a$ would be an endpoint of a gap of $J$, or $a$ itself would be a branch endpoint. Both possibilities are excluded by the definition of $X$, from which all branch endpoints and all gap endpoints, together with their full orbits, were removed. The case $h(t_j)=h(b)$ is identical. Hence
\[
        h(t_j)\in(u,v)\subset U,
\]
contradicting $h(t_j)\in\cC$ and $U\cap\cC=\emptyset$. Therefore no endpoint lies between $a$ and $b$, so $a$ and $b$ lie in the same branch.
\end{proof}

\begin{lemma}[Avoiding active-gap cuts removes active gaps]\label{lem:avoid-active-cuts}
Let $\cC$ contain $\cC_{\mathrm{gap}}$, and let $U$ be a connected component of $[0,1]\setminus\cC$. If $u<v$ are points of $U\cap Y$ and $a=q(u)$, $b=q(v)$, then no active branchwise gap is contained in $(a,b)$.
\end{lemma}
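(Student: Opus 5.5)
We argue by contradiction, in the same way as in Lemma~\ref{lem:avoid-branch-cuts}. Suppose that some active branchwise gap $G=(g_1,g_2)$, lying in a branch $I_k$, satisfies $G\subset(a,b)$. By definition its cut value is $h(G)=h(g_1)=h(g_2)$, and $h(G)\in\cC_{\mathrm{gap}}\subset\cC$. We will show that $h(G)\in(u,v)$. Since $(u,v)\subset U$ (because $U$ is an interval containing $u$ and $v$), this contradicts $U\cap\cC=\emptyset$.

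\emph{Weak inequalities.} Since $a\le g_1$, $g_2\le b$, and $h$ is non-decreasing,
\[
u=h(a)\le h(g_1)=h(G)=h(g_2)\le h(b)=v .
\]

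\emph{Strictness at the left end.} Suppose $h(g_1)=h(a)$. If $a<g_1$, then $\mu((a,g_1))=0$, so $(a,g_1)\cap J=\emptyset$ by the support argument used in Lemma~\ref{lem:h-isomorphism}. Then $(a,g_2)$ would be disjoint from $J$ except possibly at the single point $g_1$.

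We handle this case as follows. The point $g_1$ is an endpoint of a branchwise gap lying inside the branch interior. A branchwise gap is a component of $I_k^\circ\setminus J$, so its endpoints are either points of $J$ or branch endpoints. Since $g_1\in(a,b)$, Lemma~\ref{lem:avoid-branch-cuts} shows $g_1$ is not a branch endpoint, so $g_1\in J$. A point of $J$ has every neighborhood of positive $\mu$-measure, and $\mu$ is non-atomic. Hence the one-point set $\{g_1\}$ would have to carry all the positive measure of a small neighborhood contained in $(a,g_2)$, which is impossible. Therefore $a=g_1$ in this case. But then $a$ is an endpoint of the gap $G$, and since $a\in J$, it is an endpoint of a gap of $J$ (the component of $[0,1]\setminus J$ containing $G$). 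Such points and their orbits were removed in forming $X$ (Definition~\ref{def:induced-T}), contradicting $a\in X$.

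\emph{Strictness at the right end.} The case $h(g_2)=h(b)$ is symmetric.

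Combining the two strict inequalities gives $h(G)\in(u,v)$, which yields the contradiction stated at the outset.

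\emph{Main obstacle.} The only delicate point is the bookkeeping in the strictness step. A branchwise gap is a component of $I_k^\circ\setminus J$, not of $[0,1]\setminus J$. So one must check that the endpoint $g_1$ (or $g_2$), when it coincides with $a$ (or $b$), is genuinely an endpoint of a gap of $J$, or else a branch endpoint. In either case it lies in the countable set $E_0$ that was removed together with its full orbit in defining $X$.

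Activity of $G$ is not actually used here: the argument rules out any branchwise gap inside $(a,b)$ whose cut value lies in $\cC$. The hypothesis $\cC\supset\cC_{\mathrm{gap}}$ merely guarantees that active gaps have cut values in $\cC$.
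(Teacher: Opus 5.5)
Your overall route is the paper's: monotonicity of $h$ gives $u\le h(G)\le v$, strictness comes from the removal of gap endpoints in forming $X$, and $h(G)\in(u,v)\subset U$ then contradicts $U\cap\cC=\emptyset$. The problem is in your strictness step. There you appeal to Lemma~\ref{lem:avoid-branch-cuts} to conclude that $g_1$ is not a branch endpoint, and hence that $g_1\in J$. That lemma requires $\cC$ to contain every branch-cut value $h(t_j)$. The present statement only assumes $\cC\supset\cC_{\mathrm{gap}}$, so for instance $\cC=\cC_{\mathrm{gap}}$ is allowed. In that case $(a,b)$ may contain a branch endpoint. The sub-case $a<g_1=t_{k-1}\notin J$ is then not covered by your isolated-point argument as written.

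The detour through $g_1$ is unnecessary, and dropping it repairs the step. Suppose $a<g_1$ and $h(g_1)=h(a)$. Then $(a,g_1)$ is a nonempty open interval disjoint from $J$, while $a\in X\subset J$. Hence the component of $[0,1]\setminus J$ containing $(a,g_1)$ has left endpoint $a$, so $a$ is a gap endpoint, contradicting $a\in X$. If $a=g_1$, the same conclusion follows with $G$ in place of $(a,g_1)$, as you already note. Nothing about $g_1$ is needed, neither whether it lies in $J$ nor whether it is a branch endpoint. This is exactly the paper's one-line argument, and the right end is symmetric.
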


\begin{proof}
Suppose that an active branchwise gap $G=(r,s)$ is contained in $(a,b)$. Since $h$ is non-decreasing,
\[
        u=h(a)\le h(r)=h(s)=h(G)\le h(b)=v.
\]
Again the inequalities are strict. For example, if $h(r)=h(a)$, then $\mu((a,r))=0$, so $(a,r)\cap J=\emptyset$; hence $a$ would be a gap endpoint, contrary to the construction of $X$. The other endpoint equalities are handled in the same way. Hence $h(G)\in(u,v)\subset U$. But $h(G)\in\cC_{\mathrm{gap}}\subset\cC$, contradicting $U\cap\cC=\emptyset$.
\end{proof}

\begin{theorem}[Finite active-gap criterion]\label{thm:finite-active-gap}
Assume that $\mu$ is branchwise nonsingular on the support. If the active-gap cut set $\cC_{\mathrm{gap}}$ is finite, then $(J,\mu,S)$ is metrically equivalent to a finite IET.
\end{theorem}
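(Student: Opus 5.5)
The plan is to verify the finite branch-defect condition of Definition~\ref{def:finite-defect} and then appeal to Theorem~\ref{thm:finite-defect}. Take
\[
        \cC=\{h(t_0),\ldots,h(t_n)\}\cup\cC_{\mathrm{gap}}.
\]
This set is finite because $\cC_{\mathrm{gap}}$ is finite by hypothesis, and it contains all branch cut values. Let $U$ be a connected component of $[0,1]\setminus\cC$; it is an open interval. We must show two things: that $q(U\cap Y)$ lies in one continuity branch, and that the corresponding defect $\Delta_k$ is constant on it.

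For the first claim, take any two points $u<v$ of $U\cap Y$. Lemma~\ref{lem:avoid-branch-cuts} shows that $q(u)$ and $q(v)$ lie in the same branch. Fixing one point $u_0\in U\cap Y$ and comparing every other point with it, we conclude that all of $q(U\cap Y)$ lies in a single branch $I_k$. If $U\cap Y$ is empty, which can only happen when $U$ is Lebesgue-null and hence $U=\emptyset$, there is nothing to prove.

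For the second claim, take again $u<v$ in $U\cap Y$ and set $a=q(u)$, $b=q(v)$. These are points of $X$ lying in $I_k$ with $a<b$, since $h$ is non-decreasing and injective on $X$. By Lemma~\ref{lem:avoid-active-cuts}, no active branchwise gap is contained in $(a,b)$. Every branchwise gap of $I_k$ that meets $(a,b)$ is in fact contained in $(a,b)$, because $a,b\in J$; so every branchwise gap inside $(a,b)$ is inactive. Branchwise nonsingularity is assumed, so Lemma~\ref{lem:branch-transport-away-gaps} applies and gives $\Delta_k(a)=\Delta_k(b)$. Hence $\Delta_k$ is constant on $q(U\cap Y)$, the finite branch-defect condition holds, and Theorem~\ref{thm:finite-defect} yields metric equivalence with a finite IET.

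The only delicate point is checking that the hypotheses of Lemma~\ref{lem:branch-transport-away-gaps} are met exactly. That lemma needs $a$ and $b$ to lie in $X$ and in the same branch. It also needs every gap inside $(a,b)$ to be a branchwise gap of $I_k$, meaning a component of $I_k^\circ\setminus J$ and not a piece of a larger gap of $J$ crossing a branch endpoint. This holds because no branch endpoint lies in $(a,b)$ (Lemma~\ref{lem:avoid-branch-cuts}), so the components of $(a,b)\setminus J$ are precisely the branchwise gaps of $I_k$ contained in $(a,b)$. Everything else is a direct assembly of the preceding lemmas.
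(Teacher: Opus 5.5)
Your proposal is correct and follows essentially the same route as the paper's proof. It uses the same finite cut set $\cC=\{h(t_0),\ldots,h(t_n)\}\cup\cC_{\mathrm{gap}}$, the same chain Lemma~\ref{lem:avoid-branch-cuts}, Lemma~\ref{lem:avoid-active-cuts}, Lemma~\ref{lem:branch-transport-away-gaps}, and then Theorem~\ref{thm:finite-defect}; your extra checks (that all of $q(U\cap Y)$ lies in a single branch, and that the components of $(a,b)\setminus J$ are exactly branchwise gaps of $I_k$) spell out details the paper leaves implicit.
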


\begin{proof}
Let
\[
        \cC=\{h(t_0),h(t_1),\ldots,h(t_n)\}\cup\cC_{\mathrm{gap}}.
\]
By hypothesis this set is finite.

Let $U$ be a connected component of $[0,1]\setminus\cC$. We show that the finite branch-defect condition holds on $U$. Take $u<v$ in $U\cap Y$ and put
\[
        a=q(u),\qquad b=q(v).
\]
By Lemma~\ref{lem:avoid-branch-cuts}, the points $a$ and $b$ lie in a single continuity branch, say $I_k$. By Lemma~\ref{lem:avoid-active-cuts}, no active branchwise gap is contained in $(a,b)$. Thus every branchwise gap contained in $(a,b)$ is inactive. Lemma~\ref{lem:branch-transport-away-gaps} gives
\[
        \Delta_k(a)=\Delta_k(b).
\]
Since $u,v\in U\cap Y$ were arbitrary, $\Delta_k$ is constant on $q(U\cap Y)$. Therefore the finite branch-defect condition holds with the finite cut set $\cC$.

The conclusion follows from Theorem~\ref{thm:finite-defect}.
\end{proof}

\section{Defect measures and IET models}\label{sec:defect-measures}

The criterion in the previous section has a clear geometric meaning: active gaps
record those relatively open gaps of the support whose images acquire positive measure. However,
there is another possible obstruction which is less visible geometrically. A null set
may lie inside the support itself, and a branch translation may send this null set to a
set of positive measure. This is precisely the reason why branchwise nonsingularity was
assumed in Theorem~\ref{thm:finite-active-gap}.

In this section we package both phenomena into a single object. This is an intrinsic way
of measuring the total obstruction. Instead of separating obstructions arising from relatively open gaps from hidden
null-set obstructions, we measure directly the excess measure created by each branch
translation. The support of this defect measure, after passing to the distribution coordinate,
gives a natural cut set. If this cut set is finite, we obtain a finite IET model. If it is merely
Lebesgue-null, we obtain a countable IET model.

For each branch \(I_k\), write
\[
        \tau_k(x)=x+c_k.
\]
Define a finite Borel measure \(\eta_k\) on \(I_k\) by
\[
        \eta_k(B)=\mu(\tau_k(B)),
        \qquad B\subset I_k \text{ Borel},
\]
and put \(\mu_k=\mu|_{I_k}\). We define
\[
        \sigma_k=\eta_k-\mu_k.
\]
A priori this is a signed measure, but invariance forces it to be positive.

\begin{lemma}[Positivity of the branch defect]\label{lem:positive-branch-defect}
For every branch \(I_k\) and every Borel set \(B\subset I_k\),
\[
        \mu(\tau_k(B))\geq\mu(B).
\]
Consequently, \(\sigma_k\) is a finite positive Borel measure.
\end{lemma}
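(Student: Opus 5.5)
The inequality should follow from invariance alone, by comparing a Borel set $B\subset I_k$ with the full preimage of its translate. Fix $B\subset I_k$ Borel. Since $\tau_k$ is a translation, $\tau_k(B)=B+c_k$ is Borel. By the definition of $S$, every $x\in B$ satisfies $S(x)=\tau_k(x)\in\tau_k(B)$, so
\[
        B\subset S^{-1}(\tau_k(B)).
\]
Invariance of $\mu$ then gives
\[
        \mu(\tau_k(B))=\mu\bigl(S^{-1}(\tau_k(B))\bigr)\geq\mu(B),
\]
which is the asserted inequality. The set $S^{-1}(\tau_k(B))$ may also contain points of other branches $I_j$ whose translates land in $\tau_k(B)$; this is exactly where the excess $\sigma_k$ comes from, and it only helps the inequality.

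For the measure statement, note that $\eta_k$ is a Borel measure on $I_k$. Indeed, $\tau_k$ is a Borel bijection of $I_k$ onto $I_k+c_k\subset[0,1)$, so $\eta_k=(\tau_k^{-1})_*(\mu|_{I_k+c_k})$. This is countably additive, and its total mass is $\mu(I_k+c_k)\leq1$. The measure $\mu_k=\mu|_{I_k}$ is likewise finite. Hence $\sigma_k=\eta_k-\mu_k$ is a well-defined finite signed measure: it is the difference of two finite measures, so no $\infty-\infty$ issue arises. The first part shows $\sigma_k(B)\geq0$ for every Borel $B\subset I_k$, so $\sigma_k$ is a finite positive Borel measure.

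I expect no real obstacle here. The only point needing care is Borel measurability of $\tau_k(B)$, which is immediate because translations are homeomorphisms. In particular, one should not try to invoke Corollary~\ref{cor:direct-core}: the inequality is one-sided precisely because direct images need not preserve measure off the invertible core.
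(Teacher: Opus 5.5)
Your proof is correct and is essentially the paper's argument. The paper writes \(S(B)=\tau_k(B)\) and uses \(B\subset S^{-1}(S(B))\) together with invariance, exactly as you do, and your remark that \(\sigma_k\) is the difference of two finite measures supplies the countable-additivity detail that the paper leaves implicit.
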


\begin{proof}
On \(I_k\), one has \(S(B)=\tau_k(B)\), and
\[
        B\subset S^{-1}(S(B)).
\]
By \(S\)-invariance,
\[
\mu(\tau_k(B))
=\mu(S(B))
=\mu(S^{-1}(S(B)))
\geq\mu(B).
\]
Thus \(\sigma_k(B)\geq0\) for every Borel \(B\subset I_k\).
\end{proof}

The measure \(\sigma_k\) records the excess measure carried by a direct branch image; no
negative defect or cancellation can occur. Push it to the distribution coordinate by setting
\[
        \rho_k=h_*\sigma_k,
\]
so that, for Borel sets \(E\subset[0,1]\),
\[
        \rho_k(E)=\sigma_k\bigl(h^{-1}(E)\cap I_k\bigr).
\]
Thus \(\rho_k\) records where the \(k\)-th branch displacement can change.

Finally, define the \emph{canonical defect cut set} by
\[
        \cC_{\mathrm{def}}
        =
        \{h(t_0),h(t_1),\ldots,h(t_n)\}
        \cup
        \bigcup_{k=1}^n \supp\rho_k.
\]
The first part records the ordinary branch cuts, and the second records the points in the
distribution coordinate where direct branch transport creates positive excess measure. We
say that the finite defect-support condition holds if \(\cC_{\mathrm{def}}\) is finite.

The following elementary identity explains why this is the right object.

\begin{lemma}[Defect-measure identity]\label{lem:defect-measure-identity}
Let \(a<b\) be two points lying in the same branch \(I_k\). Then, up to endpoint
conventions which are irrelevant because \(\mu\) is non-atomic,
\[
        \Delta_k(b)-\Delta_k(a)=\sigma_k((a,b)).
\]
\end{lemma}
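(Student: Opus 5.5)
The identity is a direct computation from the definitions of \(\Delta_k\), \(h\) and \(\sigma_k\). We expand the difference of defects as a difference of two interval masses, and then recognize that difference as \(\eta_k-\mu_k\) evaluated on \((a,b)\). No dynamical input beyond the definitions is needed; positivity (Lemma~\ref{lem:positive-branch-defect}) is not even required for the identity itself.

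\textbf{Step 1 (expand the defects).} Since \(a<b\) lie in the same branch \(I_k\), the translates satisfy \(a+c_k<b+c_k\). Using \(h(x)=\mu([0,x])\), we write
\[
        \Delta_k(b)-\Delta_k(a)
        =\bigl(h(b+c_k)-h(a+c_k)\bigr)-\bigl(h(b)-h(a)\bigr)
        =\mu\bigl((a+c_k,b+c_k]\bigr)-\mu\bigl((a,b]\bigr).
\]
Because \(\mu\) is non-atomic, the half-open intervals may be replaced by open ones. This is exactly the ``endpoint convention'' clause in the statement.

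\textbf{Step 2 (identify the terms with \(\eta_k\) and \(\mu_k\)).} Since \(I_k=[t_{k-1},t_k)\) is an interval containing \(a\) and \(b\), we have \((a,b)\subset I_k\), so \((a,b)\) is a legitimate Borel argument for the measures on \(I_k\). Moreover \(\tau_k((a,b))=(a+c_k,b+c_k)\). Hence
\[
        \mu\bigl((a+c_k,b+c_k)\bigr)=\eta_k((a,b)),
        \qquad
        \mu((a,b))=\mu_k((a,b)).
\]
Subtracting gives \(\sigma_k((a,b))\). Both quantities are finite, so there is no \(\infty-\infty\) issue, and \(\sigma_k\) is a genuine (positive) finite measure by Lemma~\ref{lem:positive-branch-defect}.

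\textbf{Main point of care.} The only thing to watch is bookkeeping. First, we must check that the intervals lie in \(I_k\) so that \(\tau_k\) is the correct branch translation. Second, the possible endpoint \(b=t_k\) is excluded, since \(b\in I_k\). Third, the discrepancy between closed, half-open and open intervals is killed by non-atomicity.
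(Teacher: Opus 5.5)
Your proposal is correct and follows essentially the same argument as the paper. You rewrite \(\Delta_k(b)-\Delta_k(a)\) as \(\mu\bigl((a+c_k,b+c_k)\bigr)-\mu\bigl((a,b)\bigr)\) using non-atomicity, then recognize this as \(\mu(\tau_k((a,b)))-\mu((a,b))=\sigma_k((a,b))\).
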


\begin{proof}
Recall that
\[
        \Delta_k(x)=h(x+c_k)-h(x).
\]
Therefore
\[
\begin{aligned}
\Delta_k(b)-\Delta_k(a)
&=
\bigl(h(b+c_k)-h(b)\bigr)
-
\bigl(h(a+c_k)-h(a)\bigr) \\
&=
\bigl(h(b+c_k)-h(a+c_k)\bigr)
-
\bigl(h(b)-h(a)\bigr).
\end{aligned}
\]
Since \(\mu\) has no atoms,
\[
        h(b)-h(a)=\mu((a,b))
\]
and
\[
        h(b+c_k)-h(a+c_k)=\mu((a+c_k,b+c_k)).
\]
But
\[
        (a+c_k,b+c_k)=\tau_k((a,b)).
\]
Hence
\[
\Delta_k(b)-\Delta_k(a)
=
\mu(\tau_k((a,b)))-\mu((a,b))
=
\sigma_k((a,b)).
\]
\end{proof}

Before proving the finite and countable consequences, we record the local statement behind
the construction. The canonical defect cut set is sufficient: away from it, the
distribution-coordinate model is locally a translation. We do not claim that it is always the
smallest possible essential cut set.

\begin{theorem}[Defect-set structure theorem]\label{thm:defect-set-structure}\label{thm:finite-defect-support}\label{thm:null-defect-support}
Let \(T=h\circ S\circ q\) be the induced Lebesgue-preserving interval map. Then the defect
cut set \(\cC_{\mathrm{def}}\) has the following properties.
\begin{enumerate}[label=\textup{(\roman*)}]
    \item Let \(U\) be a connected component of
    \[
            [0,1]\setminus \cC_{\mathrm{def}}.
    \]
    If \(U\cap Y\neq\emptyset\), then \(q(U\cap Y)\) is contained in a single branch
    \(I_k\), and there is a constant \(d_U\) such that
    \[
            T(u)=u+d_U
    \]
    for every \(u\in U\cap Y\). In particular, \(T\) is a translation on \(U\) modulo
    Lebesgue-null sets.

    \item If \(\cC_{\mathrm{def}}\) is finite, then \((J,\mu,S)\) is metrically equivalent
    to a finite IET.

    \item If
    \[
            \Leb(\cC_{\mathrm{def}})=0,
    \]
    then \((J,\mu,S)\) is metrically equivalent to a countable IET.
\end{enumerate}
\end{theorem}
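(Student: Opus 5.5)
For part (i), fix a component \(U\) of \([0,1]\setminus\cC_{\mathrm{def}}\) with \(U\cap Y\neq\emptyset\). Since \(\cC_{\mathrm{def}}\) contains every \(h(t_j)\), Lemma~\ref{lem:avoid-branch-cuts} shows that \(q(U\cap Y)\) lies in a single branch \(I_k\).

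Take \(u<v\) in \(U\cap Y\) and put \(a=q(u)\), \(b=q(v)\). By Lemma~\ref{lem:defect-measure-identity},
\[
        \Delta_k(b)-\Delta_k(a)=\sigma_k((a,b)).
\]
So it suffices to show \(\sigma_k((a,b))=0\). Since \(h\) is non-decreasing,
\[
        (a,b)\subset h^{-1}([u,v])\cap I_k,
        \qquad\text{so}\qquad
        \sigma_k((a,b))\le\rho_k([u,v]).
\]
Now \([u,v]\subset U\) because \(U\) is an interval, and \(U\) is disjoint from \(\supp\rho_k\). Hence \([u,v]\) is a compact subset of the open set \([0,1]\setminus\supp\rho_k\). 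It is therefore covered by finitely many open \(\rho_k\)-null sets, which gives \(\rho_k([u,v])=0\).

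Thus \(\Delta_k\) is constant on \(q(U\cap Y)\), say equal to \(d_U\). Lemma~\ref{lem:local-translation-criterion} then gives \(T(u)=u+d_U\) for every \(u\in U\cap Y\).

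The main point to check is that the inclusion \((a,b)\subset h^{-1}([u,v])\) loses nothing at the endpoints. It is the closed interval \([u,v]\) that is used, and no endpoint convention on \(\sigma_k\) matters. This is because \(\sigma_k\) has no atoms at points of \(X\), or more simply because \(\sigma_k\) charges only \(\tau_k\)-images of \(\mu\)-positive sets. Only the open interval \((a,b)\) enters the identity, so this is harmless.

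Part (ii) follows at once. If \(\cC_{\mathrm{def}}\) is finite, part (i) says exactly that the finite branch-defect condition of Definition~\ref{def:finite-defect} holds with \(\cC=\cC_{\mathrm{def}}\). Components \(U\) with \(U\cap Y=\emptyset\) are Lebesgue-null, so they are empty. Theorem~\ref{thm:finite-defect} then concludes.

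For part (iii), I will mimic the proof of Theorem~\ref{thm:finite-defect} with countably many pieces. Since \(\cC_{\mathrm{def}}\) is closed (a finite set union finitely many supports), its complement is a countable disjoint union of open intervals \(U_m\). This union has full measure because \(\Leb(\cC_{\mathrm{def}})=0\). Each \(U_m\) has positive length, so it meets the full-measure set \(Y\). By part (i), \(T(u)=u+d_m\) almost everywhere on \(U_m\), and as before \(U_m+d_m\subset[0,1]\) up to endpoints.

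Disjointness of the images modulo null sets follows from the same doubling argument as before. If \((U_i+d_i)\cap(U_j+d_j)\) contained a set \(B\) of positive measure, then \(\Leb(T^{-1}B)\ge2\Leb(B)\), contradicting Lebesgue invariance (Theorem~\ref{thm:metric-model}). For coverage,
\[
        \Leb\Bigl(\bigcup_m(U_m+d_m)\Bigr)=\sum_m\Leb(U_m)=1,
\]
so the images cover \([0,1]\) modulo a null set.

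After redefining \(T\) on a null set, it is therefore a countable IET in the sense of Definition~\ref{def:countable-iem}. It remains to absorb the modification into the metric equivalence. Since \(T\) already preserves Lebesgue measure, the exceptional set can be saturated to a null invariant set by intersecting \(Y\) with \(\bigcap_{m\in\mathbb Z}T^{-m}\) of the good set, exactly as in the proof of Theorem~\ref{thm:automorphism-countable-iem}. Composing with Theorem~\ref{thm:metric-model} gives the metric equivalence.
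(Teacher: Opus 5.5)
Your proposal is correct and follows the paper's proof essentially step for step: Lemma~\ref{lem:avoid-branch-cuts} fixes the branch, monotonicity of $h$ places $(a,b)$ inside $h^{-1}$ of a subset of $U$, and the defect-measure identity gives (i); Theorem~\ref{thm:finite-defect} gives (ii); and the length count plus the doubling argument give (iii). The only differences are cosmetic: you obtain $\rho_k([u,v])=0$ from a finite compactness cover rather than using $\rho_k(U)=0$ directly, and in (iii) you spell out the saturation to an invariant full-measure set when absorbing the null-set modification into the metric equivalence, which the paper leaves implicit.
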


\begin{proof}
We first prove the local statement in (i). Let \(U\) be a connected component of
\([0,1]\setminus \cC_{\mathrm{def}}\). If \(U\cap Y\) is empty or consists of only one point,
there is nothing to prove. Otherwise take two points
\[
        u<v, \qquad u,v\in U\cap Y,
\]
and put
\[
        a=q(u),\qquad b=q(v).
\]
Since \(h:X\to Y\) is one-to-one and non-decreasing, its inverse \(q:Y\to X\) is
order-preserving. Hence
\[
        a<b,\qquad h(a)=u,\qquad h(b)=v.
\]

The set \(\cC_{\mathrm{def}}\) contains all branch endpoint values
\[
        h(t_0),h(t_1),\ldots,h(t_n).
\]
Therefore Lemma~\ref{lem:avoid-branch-cuts} shows that no branch endpoint lies between
\(a\) and \(b\). Thus \(a\) and \(b\) lie in one continuity branch, say \(I_k\). Since this is
true for every pair of points in \(U\cap Y\), the whole set \(q(U\cap Y)\) is contained in a
single branch \(I_k\).

Now let \(x\in(a,b)\). By monotonicity of \(h\),
\[
        u=h(a)\leq h(x)\leq h(b)=v.
\]
Because \(U\) is an interval and \(u,v\in U\), it follows that
\[
        h(x)\in U.
\]
Hence
\[
        (a,b)\subset h^{-1}(U)\cap I_k.
\]
On the other hand, by the definition of \(\cC_{\mathrm{def}}\),
\[
        U\cap \supp\rho_k=\emptyset.
\]
Therefore
\[
        \rho_k(U)=0.
\]
Using the definition of \(\rho_k\), we get
\[
        0=\rho_k(U)=\sigma_k\bigl(h^{-1}(U)\cap I_k\bigr).
\]
Since \((a,b)\subset h^{-1}(U)\cap I_k\), this gives
\[
        \sigma_k((a,b))=0.
\]
By Lemma~\ref{lem:defect-measure-identity},
\[
        \Delta_k(b)-\Delta_k(a)=0.
\]
Thus
\[
        \Delta_k(a)=\Delta_k(b).
\]
Since the points \(u,v\in U\cap Y\) were arbitrary, \(\Delta_k\) is constant on
\(q(U\cap Y)\). Denote this constant by \(d_U\). Then for every \(u\in U\cap Y\), with
\(x=q(u)\), we have
\[
        T(u)=h(Sx)=h(x+c_k)=h(x)+\Delta_k(x)=u+d_U.
\]
This proves (i).

If \(\cC_{\mathrm{def}}\) is finite, then \([0,1]\setminus\cC_{\mathrm{def}}\) has only finitely
many connected components. By (i), on each component the induced map \(T\) agrees almost
everywhere with a translation. Hence the finite branch-defect condition of
Theorem~\ref{thm:finite-defect} holds. That theorem then implies that \((J,\mu,S)\) is
metrically equivalent to a finite IET. This proves (ii).

It remains to prove (iii). The set \(\cC_{\mathrm{def}}\) is closed, because it is a finite
union of closed supports \(\supp\rho_k\), together with finitely many branch endpoint values.
Also \(0=h(t_0)\) and \(1=h(t_n)\) belong to \(\cC_{\mathrm{def}}\). Therefore
\[
        [0,1]\setminus \cC_{\mathrm{def}}
\]
is a countable disjoint union of open intervals. Write
\[
        [0,1]\setminus \cC_{\mathrm{def}}=\bigcup_{m=1}^{\infty} U_m.
\]
If \(\Leb(\cC_{\mathrm{def}})=0\), then these intervals cover \([0,1]\) modulo a Lebesgue-null
set. By (i), for each \(m\) there is a constant \(d_m\) such that
\[
        T(u)=u+d_m
\]
for almost every \(u\in U_m\). Since the family \(\{U_m\}\) is countable and \(Y\) has
full Lebesgue measure, we may redefine \(T\) on the null set
\[
        \cC_{\mathrm{def}}\cup\bigcup_{m\geq1}(U_m\setminus Y)
\]
so that \(T(u)=u+d_m\) for every \(u\in U_m\). This modification does not change the
measured system or Lebesgue preservation, because it is confined to a Lebesgue-null set.
Moreover,
\[
        U_m+d_m\subseteq[0,1]
\]
up to endpoints. Indeed, otherwise a nonempty open subinterval of \(U_m\) would be
translated outside \([0,1]\), contradicting that the original map \(T\) is
\([0,1]\)-valued on the full-measure set \(U_m\cap Y\). Thus \(T\) is represented by a
countable piecewise translation on the intervals \(U_m\).

We now check that this representative is a countable IET.
Put
\[
        V_m=U_m+d_m.
\]
Suppose that two distinct image intervals \(V_i\) and \(V_j\) overlap in a Borel set \(B\)
of positive Lebesgue measure. Since \(T\) is a translation on \(U_i\), the preimage of \(B\)
inside \(U_i\) has measure \(\Leb(B)\). Similarly, the preimage of \(B\) inside \(U_j\) also
has measure \(\Leb(B)\). These two preimage sets are disjoint, and therefore
\[
        \Leb(T^{-1}B)\geq 2\Leb(B).
\]
This contradicts the Lebesgue preservation of \(T\), proved in Theorem~\ref{thm:metric-model}.
Hence the image intervals \(V_m\) are pairwise disjoint modulo null sets.

Finally, translations preserve Lebesgue measure, so
\[
        \sum_m \Leb(V_m)=\sum_m \Leb(U_m)=1.
\]
Since the intervals \(V_m\) are pairwise disjoint modulo null sets, their union has full
Lebesgue measure. Thus \(T\) is a countable IET modulo null
sets. The metric equivalence with \((J,\mu,S)\) follows again from Theorem~\ref{thm:metric-model}.
This proves (iii).
\end{proof}

The next result shows that finite defect support is not merely another finite-cut
condition. It also rules out the hidden singular obstruction which branchwise
nonsingularity was designed to exclude.

\begin{proposition}[Finite defect support implies branchwise nonsingularity]\label{prop:defect-support-nonsingular}
Suppose that \(\supp\rho_k\) is finite for a branch \(I_k\). Then \(\mu\) is branchwise
nonsingular on \(I_k\). That is, if
\[
        N\subset J\cap I_k
\]
is Borel and
\[
        \mu(N)=0,
\]
then
\[
        \mu(\tau_k(N))=0.
\]
Consequently, if \(\cC_{\mathrm{def}}\) is finite, then \(\mu\) is branchwise
nonsingular on the support.
\end{proposition}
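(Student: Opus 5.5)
The plan is to reduce the statement to the defect measure itself. For a Borel set \(N\subset J\cap I_k\) with \(\mu(N)=0\), the definition of \(\sigma_k=\eta_k-\mu_k\) gives
\[
        \sigma_k(N)=\mu(\tau_k(N))-\mu(N)=\mu(\tau_k(N)).
\]
So it suffices to show that \(\sigma_k(N)=0\). Positivity of \(\sigma_k\) (Lemma~\ref{lem:positive-branch-defect}) means we only need to show that \(\sigma_k\) gives zero mass to \(N\). The strategy is to localize \(\sigma_k\) onto a finite subset of \(J\) and then use that \(\sigma_k\) has no atoms.

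First, put \(F=\supp\rho_k\), which is finite by hypothesis. Since \(F\) is closed and \(\rho_k\) vanishes off its support, \(\rho_k([0,1]\setminus F)=0\). Unwinding \(\rho_k=h_*\sigma_k\), this says \(\sigma_k\bigl(I_k\setminus h^{-1}(F)\bigr)=0\). Hence
\[
        \sigma_k(N)=\sigma_k\bigl(N\cap h^{-1}(F)\bigr).
\]

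Second, the main step: show that \(h^{-1}(F)\cap J\) is finite. Fix \(c\in F\). Because \(h\) is continuous and non-decreasing, \(h^{-1}(c)\) is a closed interval \([a,b]\), possibly degenerate. If \(a<b\), then \(\mu((a,b))=h(b)-h(a)=0\). Exactly as in the proof of Lemma~\ref{lem:h-isomorphism}, an open interval of measure zero cannot meet \(\supp\mu\), so \((a,b)\cap J=\emptyset\). Therefore \(h^{-1}(c)\cap J\subset\{a,b\}\). Since \(N\subset J\), it follows that \(N\cap h^{-1}(F)\) has at most \(2\#F\) points.

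Third, check that \(\sigma_k\) has no atoms. For \(x\in I_k\), \(\tau_k(\{x\})=\{x+c_k\}\) is a single point, and \(\mu\) is non-atomic. Hence \(\sigma_k(\{x\})=\mu(\{x+c_k\})-\mu(\{x\})=0\). By finite additivity \(\sigma_k(N\cap h^{-1}(F))=0\), so \(\sigma_k(N)=0\) and therefore \(\mu(\tau_k(N))=0\).

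For the final assertion, if \(\cC_{\mathrm{def}}\) is finite, then each \(\supp\rho_k\subset\cC_{\mathrm{def}}\) is finite. Applying the above to every branch gives branchwise nonsingularity on the support in the sense of Definition~\ref{def:branch-nonsingularity}. The only delicate point is the second step, namely that a level set of \(h\) meets \(J\) in at most its two endpoints. This step uses only continuity and monotonicity of \(h\) together with the defining property of the support.
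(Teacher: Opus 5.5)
Your proposal is correct and follows essentially the same route as the paper. Both arguments localize \(\sigma_k\) to \(N\cap h^{-1}(\supp\rho_k)\), show that each fiber of \(h\) meets \(J\) in at most its two endpoints, and conclude from the non-atomicity of \(\mu\); the only difference is that you prove \(\sigma_k(N)=0\) directly, while the paper assumes \(\mu(\tau_k(N))>0\) and reaches a contradiction through the same steps.
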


\begin{proof}
Suppose, toward a contradiction, that branchwise nonsingularity fails on \(I_k\). Then
there exists a Borel set
\[
        N\subset J\cap I_k
\]
such that
\[
        \mu(N)=0
\]
but
\[
        \mu(\tau_k(N))>0.
\]
By definition,
\[
        \eta_k(N)=\mu(\tau_k(N))>0,
\]
whereas
\[
        \mu_k(N)=\mu(N)=0.
\]
Thus
\[
        \sigma_k(N)>0.
\]

Let
\[
        F=\supp\rho_k.
\]
By assumption \(F\) is finite. Since \(\rho_k\) is supported on \(F\),
\[
        \rho_k([0,1]\setminus F)=0.
\]
Using the definition of \(\rho_k\), this gives
\[
        \sigma_k\bigl(h^{-1}([0,1]\setminus F)\cap I_k\bigr)=0.
\]
Since \(\sigma_k(N)>0\), it follows that
\[
        \sigma_k(N\cap h^{-1}(F))>0.
\]

We now show that this is impossible. For each \(r\in[0,1]\), the fiber \(h^{-1}(r)\)
is a closed interval, possibly degenerate, because \(h\) is continuous and
non-decreasing. If
\[
        h^{-1}(r)=[p,q]
\]
with \(p<q\), then \(h\) is constant on \((p,q)\), and hence
\[
        \mu((p,q))=0.
\]
Since \(J=\supp\mu\), no interior point of \((p,q)\) can belong to \(J\). Thus
\[
        J\cap h^{-1}(r)\subset\{p,q\}.
\]
If the fiber is degenerate, then \(J\cap h^{-1}(r)\) contains at most one point.
Therefore, for every \(r\),
\[
        J\cap h^{-1}(r)
\]
is finite.

Since \(F\) is finite, we conclude that
\[
        J\cap h^{-1}(F)
\]
is finite. Hence
\[
        N\cap h^{-1}(F)
\]
is finite.

Because \(\mu\) is non-atomic,
\[
        \mu(N\cap h^{-1}(F))=0.
\]
Also, \(\tau_k(N\cap h^{-1}(F))\) is finite, so again by non-atomicity,
\[
        \mu(\tau_k(N\cap h^{-1}(F)))=0.
\]
Therefore both \(\mu_k\) and \(\eta_k\) vanish on \(N\cap h^{-1}(F)\), and hence
\[
        \sigma_k(N\cap h^{-1}(F))=0.
\]
This contradicts
\[
        \sigma_k(N\cap h^{-1}(F))>0.
\]
Thus branchwise nonsingularity cannot fail on \(I_k\).
\end{proof}

We also record the relation between this canonical defect cut set and the active gaps from
the previous section.

For a fixed branch \(I_k\), write
\[
        \cC_{\mathrm{gap},k}
        =
        \{h(G):G \text{ is an active branchwise gap in } I_k\}.
\]
Thus
\[
        \cC_{\mathrm{gap}}=\bigcup_{k=1}^n \cC_{\mathrm{gap},k}.
\]

\begin{lemma}[Active gaps give defect atoms]\label{lem:active-gaps-defect-atoms}
Let \(G=(a,b)\) be an active branchwise gap contained in the branch \(I_k\). Then
\[
        h(G)\in \supp\rho_k.
\]
In particular,
\[
        \cC_{\mathrm{gap}}
        \subset
        \bigcup_{k=1}^n \supp \rho_k.
\]
\end{lemma}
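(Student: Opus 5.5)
The plan is to show that the active gap \(G\) itself carries positive \(\sigma_k\)-mass and that \(h\) collapses all of \(G\) to the single value \(h(G)\). The conclusion then follows from the definition of \(\rho_k=h_*\sigma_k\).

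First I compute \(\sigma_k(G)\). Since \(G=(a,b)\) is disjoint from \(J=\supp\mu\), we have \(\mu_k(G)=\mu(G)=0\). Activity of \(G\) means \(\eta_k(G)=\mu(\tau_k(G))>0\). Hence
\[
        \sigma_k(G)=\eta_k(G)-\mu_k(G)=\mu(\tau_k(G))>0 .
\]
This is well defined: \(G\subset I_k^\circ\subset I_k\) is Borel, and \(\sigma_k\) is a genuine positive measure by Lemma~\ref{lem:positive-branch-defect}.

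Next I locate the push-forward. Since \(\mu(G)=0\), \(h\) is constant on \(\overline G=[a,b]\), so \(h(x)=h(G)\) for every \(x\in G\). Therefore \(G\subset h^{-1}(\{h(G)\})\cap I_k\), and
\[
        \rho_k(\{h(G)\})=\sigma_k\bigl(h^{-1}(\{h(G)\})\cap I_k\bigr)\geq\sigma_k(G)>0 .
\]
Thus \(h(G)\) is an atom of \(\rho_k\). An atom of a finite Borel measure on \([0,1]\) always lies in its support, because every open neighborhood of the point has measure at least the atom's mass. So \(h(G)\in\supp\rho_k\). Taking the union over all active branchwise gaps and all \(k\) gives \(\cC_{\mathrm{gap}}\subset\bigcup_k\supp\rho_k\).

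The only step needing care is monotonicity in the middle inequality, namely \(\sigma_k(G)\le\sigma_k(h^{-1}(\{h(G)\})\cap I_k)\). This holds because \(\sigma_k\) is positive, which is exactly why Lemma~\ref{lem:positive-branch-defect} is needed: for a merely signed measure, cancellation elsewhere on the fiber could hide the atom. A secondary point is that the fiber \(h^{-1}(\{h(G)\})\) may be strictly larger than \([a,b]\). This happens only through further null intervals or endpoints, and positivity makes it harmless as well. I expect no real obstacle beyond these remarks.
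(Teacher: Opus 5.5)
Your proposal is correct and follows essentially the same route as the paper. You show \(\sigma_k(G)=\mu(\tau_k(G))>0\), observe that \(h\) is constant on \(G\), and use positivity of \(\sigma_k\) to get \(\rho_k(\{h(G)\})\geq\sigma_k(G)>0\), which places the atom \(h(G)\) in \(\supp\rho_k\).
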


\begin{proof}
Since \(G\) is a gap of \(J\),
\[
        \mu(G)=0.
\]
Since \(G\) is active,
\[
        \mu(\tau_k(G))>0.
\]
Therefore
\[
        \sigma_k(G)=\mu(\tau_k(G))-\mu(G)>0.
\]

On the other hand, \(h\) is constant on \(G\), because \(G\cap J=\emptyset\). Thus
\[
        h(G)=h(a)=h(b).
\]
Since
\[
        G\subset h^{-1}(\{h(G)\}),
\]
we get
\[
\rho_k(\{h(G)\})
=
\sigma_k\bigl(h^{-1}(\{h(G)\})\cap I_k\bigr)
\ge
\sigma_k(G)
>
0.
\]
Therefore
\[
        h(G)\in \supp\rho_k.
\]
\end{proof}

The previous lemma shows that active-gap cuts belong to the defect support. Under
branchwise nonsingularity, there are no other defect points: the defect support is exactly
the closure of the active-gap cuts.

\begin{proposition}[Defect support and active-gap cuts]\label{prop:defect-support-active-closure}
Assume that \(\mu\) is branchwise nonsingular on the support. Then, for every branch
\(I_k\),
\[
        \supp\rho_k=\overline{\cC_{\mathrm{gap},k}}.
\]
\end{proposition}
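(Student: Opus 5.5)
The inclusion \(\overline{\cC_{\mathrm{gap},k}}\subset\supp\rho_k\) is immediate. Lemma~\ref{lem:active-gaps-defect-atoms} gives \(\cC_{\mathrm{gap},k}\subset\supp\rho_k\), and supports are closed. No nonsingularity is needed here. The substance is the reverse inclusion. We fix \(r\notin\overline{\cC_{\mathrm{gap},k}}\) and must produce an open interval \(W\ni r\) with \(\rho_k(W)=0\).

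Choose an open interval \(W\ni r\) disjoint from \(\overline{\cC_{\mathrm{gap},k}}\). The preimage \(h^{-1}(W)\) is an open interval, since \(h\) is continuous and monotone. Hence \(V:=h^{-1}(W)\cap I_k^\circ\) is an open interval, and \(\rho_k(W)=\sigma_k(V)\), because the endpoints of \(I_k\) are \(\sigma_k\)-null: \(\mu\) and \(\eta_k\) are non-atomic. It therefore suffices to show \(\sigma_k(V)=0\). We decompose \(V=(V\cap J)\sqcup(V\setminus J)\).

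\emph{Gap part.} The open set \(V\setminus J\) is a countable union of connected components. Each such component \(G'\) is contained in a branchwise gap \(G\) of \(I_k\), namely the component of \(I_k^\circ\setminus J\) containing it. Since \(h\) is constant on \(G\) and \(G\cap V\neq\emptyset\), the value \(h(G)\) lies in \(W\). As \(W\) misses \(\cC_{\mathrm{gap},k}\), the gap \(G\) is inactive, so \(\mu(\tau_k(G'))\le\mu(\tau_k(G))=0\). Also \(\mu(G')=0\), so \(\sigma_k(G')=0\). Countable additivity then gives \(\sigma_k(V\setminus J)=0\).

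\emph{Support part.} For this part we reuse the argument of Lemma~\ref{lem:branch-transport-away-gaps}. Let \(A=V\cap J\) and let \(X\) be the invariant core from Definition~\ref{def:induced-T}. On \(A\cap X\), the map \(\tau_k\) coincides with \(S|_X\), so Corollary~\ref{cor:direct-core} gives \(\eta_k(A\cap X)=\mu(A\cap X)\). The set \(A\setminus X\) is a \(\mu\)-null Borel subset of \(J\cap I_k\), so branchwise nonsingularity gives \(\eta_k(A\setminus X)=0=\mu(A\setminus X)\). Hence \(\sigma_k(A)=0\), and therefore \(\sigma_k(V)=0\), \(\rho_k(W)=0\), and \(r\notin\supp\rho_k\).

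The step needing care is the gap part. A component of \(V\setminus J\) may be only a truncated piece of a full branchwise gap, for instance at an end of \(V\). The point is that inactivity is inherited by subintervals, and that the \(h\)-value of the full gap still lies in \(W\). This holds because \(W\) is open and \(h\) is constant on the whole gap. One should also check that gaps abutting the branch endpoints are excluded by the definition of branchwise gaps. This causes no trouble, because \(V\) was taken inside \(I_k^\circ\), so every component of \(V\setminus J\) lies in some component of \(I_k^\circ\setminus J\).
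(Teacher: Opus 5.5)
Your proof is correct and follows essentially the same route as the paper. For $\overline{\cC_{\mathrm{gap},k}}\subset\supp\rho_k$ you use Lemma~\ref{lem:active-gaps-defect-atoms} plus closedness of supports. For the reverse inclusion you make the same split of $h^{-1}(W)\cap I_k$ into a support part and a gap part. The support part is handled by Corollary~\ref{cor:direct-core} on the core $X$ and branchwise nonsingularity on the null remainder; in the gap part, every branchwise gap meeting the set has its $h$-value in $W$ and is therefore inactive. The differences are cosmetic. The paper shows $\sigma_k(B)=0$ for every Borel $B\subset h^{-1}(U)\cap I_k$ via subadditivity, whereas you compute $\sigma_k$ of the open interval directly by additivity over its components, which treats truncated gaps slightly more explicitly.
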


\begin{proof}
Let \(U\subset[0,1]\) be an open interval such that
\[
        U\cap \overline{\cC_{\mathrm{gap},k}}=\emptyset.
\]
We prove that \(\rho_k(U)=0\). This will show that no point outside
\(\overline{\cC_{\mathrm{gap},k}}\) can belong to \(\supp\rho_k\).

Put
\[
        E=h^{-1}(U)\cap I_k.
\]
It is enough to show that \(\sigma_k(E)=0\). We prove the stronger statement that \(\sigma_k(B)=0\) for every Borel set \(B\subset E\).

Fix such a Borel set \(B\). Split it into the part on the support and the part outside the
support:
\[
        B_J=B\cap J,
        \qquad
        B_G=B\setminus J.
\]
First consider \(B_J\). We further decompose
\[
        B_J=(B_J\cap X)\cup (B_J\setminus X).
\]
Since \(X\) has full \(\mu\)-measure in \(J\), we have
\[
        \mu(B_J\setminus X)=0.
\]
Also \(B_J\setminus X\subset J\cap I_k\). By branchwise nonsingularity,
\[
        \mu(\tau_k(B_J\setminus X))=0.
\]
On \(B_J\cap X\), the branch translation is exactly the map \(S\). Since
\(S:X\to X\) is a measure-preserving bijection, Corollary~\ref{cor:direct-core}
gives
\[
        \mu(\tau_k(B_J\cap X))
        =
        \mu(S(B_J\cap X))
        =
        \mu(B_J\cap X).
\]
Adding the null part does not change the measure, hence
\[
        \mu(\tau_k(B_J))=\mu(B_J).
\]

Now consider \(B_G\). Since \(B_G\cap J=\emptyset\), we have \(\mu(B_G)=0\). We claim
that also
\[
        \mu(\tau_k(B_G))=0.
\]
Indeed, the set \(I_k^\circ\setminus J\) is a countable union of branchwise gaps, and
\(B_G\) is contained in the union of these gaps together with at most finitely many
branch endpoints. The branch endpoints create no measure after translation, because
\(\mu\) is non-atomic and their images are finite sets. Hence it is enough to consider
those branchwise gaps \(G\subset I_k^\circ\setminus J\) which meet \(B_G\).
For such a gap, \(h\) is constant on \(G\), and since \(G\cap B_G\subset E\), this
constant value lies in \(U\). Thus
\[
        h(G)\in U.
\]
But \(U\cap\cC_{\mathrm{gap},k}=\emptyset\), so \(G\) is not active. Hence
\[
        \mu(\tau_k(G))=0.
\]
By countable subadditivity, including the finite endpoint part if present,
\[
        \mu(\tau_k(B_G))=0.
\]
Therefore
\[
        \mu(\tau_k(B_G))=\mu(B_G)=0.
\]

Finally,
\[
        B=B_J\cup B_G.
\]
The image of \(B_G\) has zero \(\mu\)-measure, and the image of \(B_J\) has measure
\(\mu(B_J)\). Hence
\[
        \mu(\tau_k(B))=\mu(B_J)=\mu(B).
\]
Therefore
\[
        \sigma_k(B)=0
\]
for every Borel subset \(B\subset E\). It follows that \(\sigma_k(E)=0\), and so
\[
        \rho_k(U)=\sigma_k(h^{-1}(U)\cap I_k)=0.
\]
Thus
\[
        \supp\rho_k\subset \overline{\cC_{\mathrm{gap},k}}.
\]
Conversely, Lemma~\ref{lem:active-gaps-defect-atoms} gives
\[
        \cC_{\mathrm{gap},k}\subset \supp\rho_k.
\]
Since \(\supp\rho_k\) is closed, it also contains the closure of
\(\cC_{\mathrm{gap},k}\). Therefore
\[
        \overline{\cC_{\mathrm{gap},k}}\subset \supp\rho_k,
\]
and the two inclusions prove the equality.
\end{proof}

We can now state the exact relation between the geometric criterion of the previous section
and the intrinsic defect-support condition.

\begin{corollary}[Equivalence of finite obstruction conditions]\label{cor:finite-obstruction-equivalence}
The following are equivalent:
\begin{enumerate}[label=\textup{(\roman*)}]
    \item \(\cC_{\mathrm{def}}\) is finite;
    \item \(\mu\) is branchwise nonsingular on the support and \(\cC_{\mathrm{gap}}\) is finite.
\end{enumerate}
\end{corollary}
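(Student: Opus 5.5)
The plan is to prove the two implications separately, with Proposition~\ref{prop:defect-support-nonsingular}, Lemma~\ref{lem:active-gaps-defect-atoms} and Proposition~\ref{prop:defect-support-active-closure} doing essentially all of the work.

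For (i)\(\Rightarrow\)(ii), assume \(\cC_{\mathrm{def}}\) is finite. Each \(\supp\rho_k\) is a subset of \(\cC_{\mathrm{def}}\), so each is finite. Proposition~\ref{prop:defect-support-nonsingular} then shows that \(\mu\) is branchwise nonsingular on every branch, hence on the support. Lemma~\ref{lem:active-gaps-defect-atoms} gives \(\cC_{\mathrm{gap}}\subset\bigcup_k\supp\rho_k\subset\cC_{\mathrm{def}}\), so \(\cC_{\mathrm{gap}}\) is finite.

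For (ii)\(\Rightarrow\)(i), assume branchwise nonsingularity and that \(\cC_{\mathrm{gap}}\) is finite. Proposition~\ref{prop:defect-support-active-closure} applies and gives \(\supp\rho_k=\overline{\cC_{\mathrm{gap},k}}\) for each \(k\). Each \(\cC_{\mathrm{gap},k}\) is a subset of the finite set \(\cC_{\mathrm{gap}}\), so it is finite and therefore closed, and \(\supp\rho_k=\cC_{\mathrm{gap},k}\) is finite. Hence
\[
        \cC_{\mathrm{def}}=\{h(t_0),\ldots,h(t_n)\}\cup\bigcup_{k=1}^n\cC_{\mathrm{gap},k}=\{h(t_0),\ldots,h(t_n)\}\cup\cC_{\mathrm{gap}},
\]
which is a finite union of finite sets.

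I do not expect a real obstacle here. The one point to check is that Proposition~\ref{prop:defect-support-nonsingular} is used branch by branch: in (i)\(\Rightarrow\)(ii) its hypothesis must hold for every \(k\), and it does, since every \(\supp\rho_k\) is contained in \(\cC_{\mathrm{def}}\). Its final sentence already states the support-wide conclusion. As a consistency check, the set computed in (ii)\(\Rightarrow\)(i) is exactly the cut set used in Theorem~\ref{thm:finite-active-gap}, so the two finite criteria coincide.
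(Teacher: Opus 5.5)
Your proposal is correct and matches the paper's proof essentially step for step. For (i)\(\Rightarrow\)(ii) you apply Proposition~\ref{prop:defect-support-nonsingular} branch by branch and use Lemma~\ref{lem:active-gaps-defect-atoms}; for (ii)\(\Rightarrow\)(i) you use Proposition~\ref{prop:defect-support-active-closure}, noting that a finite \(\cC_{\mathrm{gap},k}\) equals its own closure.
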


\begin{proof}
Assume first that \(\cC_{\mathrm{def}}\) is finite. Then every \(\supp\rho_k\) is finite.
By Proposition~\ref{prop:defect-support-nonsingular}, \(\mu\) is branchwise nonsingular
on every branch. Also, by Lemma~\ref{lem:active-gaps-defect-atoms},
\[
        \cC_{\mathrm{gap}}
        \subset
        \bigcup_{k=1}^n \supp\rho_k.
\]
The right-hand side is finite, so \(\cC_{\mathrm{gap}}\) is finite.

Conversely, assume that \(\mu\) is branchwise nonsingular on the support and that
\(\cC_{\mathrm{gap}}\) is finite. Then each \(\cC_{\mathrm{gap},k}\) is finite and
therefore closed. Hence
\[
        \overline{\cC_{\mathrm{gap},k}}=\cC_{\mathrm{gap},k}.
\]
By Proposition~\ref{prop:defect-support-active-closure},
\[
        \supp\rho_k
        =\overline{\cC_{\mathrm{gap},k}}
        =\cC_{\mathrm{gap},k}
\]
for every \(k\). Hence each \(\supp\rho_k\) is finite. Since there are only finitely many
branch endpoint values \(h(t_j)\), the set \(\cC_{\mathrm{def}}\) is finite.
\end{proof}

\begin{remark}
The finite defect-support condition is therefore not merely an additional sufficient
hypothesis. It is the intrinsic measure-theoretic form of the two finite obstructions used
in the active-gap criterion. Active gaps are the visible geometric part of the defect, while
branchwise nonsingularity rules out hidden singular defects inside the support. The defect
measures combine both effects into a single object.
\end{remark}

\begin{remark}
The conclusions of Theorem~\ref{thm:defect-set-structure} are metric, not topological.
Part~\textup{(ii)} does not assert that the original interval translation map is of finite
type, nor that the map literally becomes an IET on a geometric attractor.
It asserts that, after passing to the distribution coordinate and ignoring null sets, the
measured system is represented by a finite IET. Part~\textup{(iii)} gives
the analogous countable statement when the canonical defect cut set is null but not necessarily
finite.
\end{remark}

\section{Examples, basic cases, and geometric reformulations}

The preceding criteria are most useful when they can be checked directly from the support or
from a familiar invariant measure. We record a few basic cases and then collect several
equivalent ways of detecting active gaps.

\begin{example}[The IET case]\label{ex:iem}
Suppose that \(S\) is already a finite IET and
\(\mu=\Leb\). Then \(J=[0,1]\), \(h(x)=x\), and every branch translation preserves
Lebesgue measure. Hence \(\sigma_k=0\) for every \(k\),
\[
        \cC_{\mathrm{def}}=\{t_0,\ldots,t_n\},
\]
and the induced map in the distribution coordinate is exactly \(S\).
\end{example}

\begin{example}[A finite-type map with an explicit two-IET model]
\label{ex:explicit-finite-type}
Partition $I=[0,1)$ into four equal branches and define
\[
S(x)=
\begin{cases}
 x+\dfrac12, & 0\le x<\dfrac14,\\[2mm]
 x-\dfrac14, & \dfrac14\le x<\dfrac12,\\[2mm]
 x-\dfrac12, & \dfrac12\le x<\dfrac34,\\[2mm]
 x-\dfrac14, & \dfrac34\le x<1.
\end{cases}
\]
Then
\[
S(I)=
\left[0,\frac14\right)
\cup
\left[\frac12,\frac34\right),
\]
and this set is invariant under $S$. Put
\[
J=\left[0,\frac14\right]
\cup
\left[\frac12,\frac34\right]
\]
and let
\[
\mu=2\,\Leb|_J.
\]
Thus $\mu$ is the normalized Lebesgue measure on $J$. On the two components of $J$, the
map exchanges the intervals by the translations $x\mapsto x+\frac12$ and
$x\mapsto x-\frac12$. Hence $\mu$ is $S$-invariant.

The distribution function is
\[
h(x)=
\begin{cases}
 2x, & 0\le x\le \dfrac14,\\[1mm]
 \dfrac12, & \dfrac14\le x\le \dfrac12,\\[1mm]
 2x-\dfrac12, & \dfrac12\le x\le \dfrac34,\\[1mm]
 1, & \dfrac34\le x\le 1.
\end{cases}
\]
The first and third branches transport $\mu$ exactly, and therefore
\[
\sigma_1=\sigma_3=0.
\]
The second and fourth branches lie outside the support, but their images are the two
components of $J$. More precisely,
\[
\sigma_2=2\,\Leb|_{[1/4,1/2)},
\qquad
\sigma_4=2\,\Leb|_{[3/4,1)}.
\]
Since $h$ is constant on these two branches,
\[
\rho_2=\frac12\,\delta_{1/2},
\qquad
\rho_4=\frac12\,\delta_1.
\]
The branch endpoint values are
\[
h(0)=0,
\qquad
h\left(\frac14\right)=h\left(\frac12\right)=\frac12,
\qquad
h\left(\frac34\right)=h(1)=1.
\]
Consequently,
\[
\cC_{\mathrm{def}}=\left\{0,\frac12,1\right\}.
\]
The two gaps
\[
\left(\frac14,\frac12\right)
\quad\text{and}\quad
\left(\frac34,1\right)
\]
are active, with cut values $\frac12$ and $1$, respectively.

The distribution coordinate maps the two components of $J$ onto the two half-intervals
$[0,\frac12]$ and $[\frac12,1]$. The induced map is
\[
T(u)=
\begin{cases}
 u+\dfrac12, & 0\le u<\dfrac12,\\[2mm]
 u-\dfrac12, & \dfrac12\le u<1,
\end{cases}
\]
modulo endpoints. Thus the canonical model is the two-interval IET given by rotation
through one half-turn.
\end{example}

\subsection{A self-similar infinite-type example with an explicit canonical defect cut set}
\label{subsec:BT-explicit-defect}

We now compute the canonical defect cut set for a stationary member of the
Bruin--Troubetzkoy family. This gives an explicit infinite-type ITM whose ordinary
distribution-coordinate model is a genuinely countable, non-finite IET.

The external symbolic input from Bruin--Troubetzkoy is the following. Their
Theorem~7 and its proof describe the inducing substitutions, identify the
infinite-type dynamical attractor with the corresponding one-sided substitution
shift, and show that the future-itinerary coding is one-to-one except
possibly on the backward orbit of the two discontinuity points
\cite[Theorem~7, pp.~130 and 135]{BruinTroubetzkoy2003}.
For the stationary parameter considered below, their substitution chain is
constant and equal to \(\chi_2\). Further developments of the renormalization theory for
the Bruin--Troubetzkoy family appear in
\cite{ArtigianiHubertSkripchenko2026}.

The exact compact graph-directed equations, the complete enumeration of
the complementary gap towers, the identification of all active gaps, the
explicit cut values and the essential non-finiteness argument are proved in
the present paper.

We retain the interval convention of Section~2 throughout this subsection. The dynamical
domain is the half-open interval
\[
        I=[0,1),
\]
and all iterates of \(S\) are formed on \(I\). For topological statements we work in the
compactification \(\overline I=[0,1]\) and use the compact attractor
\[
        J_{\mathrm{att}}
        :=
        J_{\mathrm{att}}(S)
        =
        \bigcap_{m\geq0}\overline{S^m(I)}^{\,[0,1]}.
\]
We write
\[
        J_{\mathrm{dyn}}:=J_{\mathrm{att}}\cap I
\]
for the actual dynamical space. The map \(S\) is applied only on
\(J_{\mathrm{dyn}}\); whenever endpoints of tower intervals are followed, the prescribed
branch translations are replaced by their continuous affine extensions to the corresponding
closed branch intervals.
Below we identify the unique invariant probability and prove that its support, viewed in
\([0,1]\), is exactly \(J_{\mathrm{att}}\).

Let \(\alpha\) be the unique root in \((0,\frac12)\) of
\[
        P(x)=x^3-x^2-2x+1,
\]
and put
\[
        \beta=\alpha^2,
        \qquad
        s=1-\alpha,
        \qquad
        q=s^2.
\]
Numerically,
\[
        \alpha\approx0.4450418679,
        \qquad
        \beta\approx0.1980622642,
        \qquad
        q\approx0.3079785284.
\]
Consider
\[
S(x)=
\begin{cases}
 x+\alpha, & x\in I_1=[0,1-\alpha),\\[1mm]
 x+\beta, & x\in I_2=[1-\alpha,1-\beta),\\[1mm]
 x+\beta-1, & x\in I_3=[1-\beta,1).
\end{cases}
\tag{8.1}
\label{eq:BT-map}
\]

\subsubsection{Stationary inducing and the invariant measure}

Let
\[
        \Delta=[1-\alpha,1)
        \qquad\text{and}\qquad
        \phi:I\longrightarrow\Delta,
        \quad
        \phi(x)=1-\alpha+\alpha x.
\]
We use the same symbol \(\phi\) for its continuous extension
\([0,1]\to[1-\alpha,1]\) when endpoints or closures are considered.
The renormalization map for this family is
\[
G(a,b)=
\left(
\frac{b}{a},
\frac{b-1}{a}+\left\lfloor\frac1a\right\rfloor
\right).
\]
Since \(\lfloor1/\alpha\rfloor=2\), \(\beta=\alpha^2\), and
\[
        \alpha^3=\alpha^2+2\alpha-1,
\]
we have
\[
        G(\alpha,\beta)=(\alpha,\beta).
\]
Thus the parameter is an interior fixed point of the renormalization map. By
\cite[Corollary~5]{BruinTroubetzkoy2003}, the map \eqref{eq:BT-map} is of infinite type and
its compact attractor
\[
        J_{\mathrm{att}}
        =
        J_{\mathrm{att}}(S)
        =
        \bigcap_{m\geq0}\overline{S^m(I)}^{\,[0,1]}
        \subset[0,1]
\]
is a Cantor set.  The point \(1\), when it belongs to this compactification, is a null
endpoint and is not part of the dynamical domain.

The first-return map to \(\Delta\) is conjugate to \(S\) through \(\phi\). The return words
are
\[
\chi:
\begin{cases}
1\longmapsto2,\\
2\longmapsto311,\\
3\longmapsto31.
\end{cases}
\tag{8.2}
\label{eq:BT-substitution}
\]
This is the substitution \(\chi_2\) from
\cite[Theorem~7]{BruinTroubetzkoy2003}. For completeness, if
\(D_i=\phi(I_i)\), then, modulo endpoints,
\[
\begin{aligned}
D_1&=[1-\alpha,1-\beta),\\
D_2&=[1-\beta,1-\alpha\beta),\\
D_3&=[1-\alpha\beta,1),
\end{aligned}
\]
and points in \(D_1,D_2,D_3\) return to \(\Delta\) after respectively
\(1,3,2\) iterates, with itineraries \(2,311,31\).

The incidence matrix of \(\chi\), with columns counting letters in the substituted words, is
\[
A=
\begin{pmatrix}
0&2&1\\
1&0&0\\
0&1&1
\end{pmatrix}.
\tag{8.3}
\label{eq:BT-matrix}
\]
The matrix is primitive. Hence the substitution system is uniquely ergodic; see, for
example, \cite{Queffelec2010}. More precisely, the stationary Bratteli--Vershik model
associated with the substitution has a unique invariant probability measure determined by
the Perron--Frobenius eigenvector, by
\cite[Theorem~3.8 and Remark~3.9(1),(2)]{BezuglyiKwiatkowskiMedynetsSolomyak2010};
the correspondence between invariant measures of the substitution system and of its
stationary diagram is given in
\cite[Theorem~5.4]{BezuglyiKwiatkowskiMedynetsSolomyak2010}. Its normalized
Perron--Frobenius frequency vector is
\[
        p=
        \begin{pmatrix}
        \alpha\\
        \alpha-\alpha^2\\
        (1-\alpha)^2
        \end{pmatrix}.
\tag{8.4}
\label{eq:BT-frequency}
\]
Indeed, the Perron--Frobenius eigenvalue is \(1/(1-\alpha)\), and a direct calculation using
\(P(\alpha)=0\) gives
\[
        Ap=\frac{1}{1-\alpha}p,
        \qquad
        p_1+p_2+p_3=1.
\]

The following result is imported from Bruin--Troubetzkoy
\cite[Theorem~7 and its proof, pp.~130 and 135]{BruinTroubetzkoy2003}; we record its
specialization to the stationary parameter in the precise form needed below.

\begin{proposition}[Bruin--Troubetzkoy coding, specialized to the stationary parameter]
\label{lem:BT-symbolic-coding}
Let
\[
        \chi:
        \begin{cases}
        1\mapsto2,\\
        2\mapsto311,\\
        3\mapsto31,
        \end{cases}
\]
and let \(X_\chi\subset\{1,2,3\}^{\mathbb N_0}\) be the one-sided
substitution subshift generated by \(\chi\).

For \(x\in J_{\mathrm{dyn}}\), define its future branch itinerary by
\[
        \kappa(x)
        =
        \kappa_0(x)\kappa_1(x)\kappa_2(x)\cdots,
\]
where
\[
        \kappa_r(x)=i
        \quad\Longleftrightarrow\quad
        S^r(x)\in I_i.
\]
Then:

\begin{enumerate}[label=\textup{(\roman*)}]
\item
\(\kappa:J_{\mathrm{dyn}}\to X_\chi\) is Borel and satisfies
\[
        \kappa\circ S=\sigma\circ\kappa.
\]

\item
After adjoining the endpoint \(1\) and using the left-continuous endpoint
extension adopted by Bruin--Troubetzkoy, the coding extends to a continuous
surjection
\[
        \overline\kappa:J_{\mathrm{att}}\longrightarrow X_\chi.
\]

\item
The restriction of \(\kappa\) is one-to-one outside
\[
        E_0
        :=
        J_{\mathrm{dyn}}
        \cap
        \bigcup_{r\geq0}
        S^{-r}\{1-\alpha,1-\beta\}.
\]
In particular, \(E_0\) is a countable Borel set.
\end{enumerate}
\end{proposition}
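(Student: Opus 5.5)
This proposition is imported from Bruin--Troubetzkoy, so the plan is to check that their Theorem~7 and its proof specialize to the stationary parameter, and to verify the three parts with the tools already available here. The map \(\phi\) conjugates \(S\) to the first-return map on \(\Delta=[1-\alpha,1)\), and the return words are given by \eqref{eq:BT-substitution}. Iterating this gives nested towers: the level-\(m\) towers consist of the intervals \(S^j\phi^m(I_i)\), for \(0\le j<|\chi^m(i)|\), and they cover \(S^{N}(I)\) up to finitely many points.

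For (i), I would first note that each \(\kappa_r\) is Borel, since \(S^r\) is piecewise a translation with finitely many pieces. This shows that \(\kappa\) is Borel, and the identity \(\kappa\circ S=\sigma\circ\kappa\) holds by definition of the itinerary. To see that the image lies in \(X_\chi\), I would use the tower structure. A point of \(J_{\mathrm{dyn}}\) lies in some level-\(m\) tower for every \(m\), so every finite block of its itinerary is a subword of some \(\chi^m(i)\) or of a concatenation of two consecutive return words. Such concatenations are again legal, because the return sequence to \(\phi^m(I)\) is itself the itinerary of a point under \(S\), and that itinerary is legal by induction on \(m\).

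For (ii), the left-continuous endpoint convention makes each branch label upper semicontinuous in the correct sense. As a result, \(\kappa_r\) extends continuously on \(J_{\mathrm{att}}\), since \(J_{\mathrm{att}}\) is a Cantor set and the cut points are endpoints of gaps. I would then argue surjectivity by compactness: the image is dense, because every word of \(\chi\) is realized by some tower column, and the image is closed.

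For (iii), which I expect to be the main obstacle, suppose \(x\neq y\) have equal itineraries. Then \(x\) and \(y\) lie in the same level-\(m\) tower interval for every \(m\). The lengths of these intervals are \(\alpha^m|I_i|\to0\), which forces \(x=y\). The exception is when the two points lie on opposite sides of a cut at some iterate, that is, when \(S^r x\) or \(S^r y\) is a discontinuity point. So injectivity can fail only on \(E_0\). This set is countable because each \(S^{-r}\) of a point is finite, at most three preimages per step. The delicate part is justifying that tower intervals at level \(m\) are genuinely intervals with shrinking lengths, without gaps merging distinct points. I would extract this directly from Bruin--Troubetzkoy's proof (pp.~130, 135) rather than reprove it.
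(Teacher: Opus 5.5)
\textbf{Gap in (ii).} Your checks in (i) and the countability of \(E_0\) are essentially all the paper itself verifies. The rest of the paper's proof is the specialization \(k_r\equiv2\) of \cite[Theorem~7]{BruinTroubetzkoy2003}, justified by \(G(\alpha,\beta)=(\alpha,\beta)\). So deferring the recognizability input in (iii) is no weaker than the paper.

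Your argument for (ii) rests on the claim that the cut points are endpoints of gaps of \(J_{\mathrm{att}}\), and that claim is false here.
\begin{itemize}
\item The points \(R_1^r(\alpha)=1-\alpha-\beta^r(1-2\alpha)\) are the right endpoints of the active gaps \(A_{2r}\). They lie in \(J_{\mathrm{att}}\cap I_1\) and increase to \(1-\alpha\).
\item The points \(\phi(\beta^n\alpha)=1-\alpha+\beta^{n+1}\) lie in \(\widehat J_2\cap I_2\), because \(\alpha\in\widehat J_1\) and \(\widehat J_1\supset L(\phi(\widehat J_1))=\beta\widehat J_1\). They decrease to \(1-\alpha\).
\end{itemize}
So \(1-\alpha\in J_{\mathrm{dyn}}\) is a two-sided accumulation point of the attractor, and \(\kappa_0\) jumps from \(1\) to \(2\) across it. The same happens at \(1-\beta\). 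On the left are the right endpoints of \(A_{2r+1}\subset I_2\). On the right are the points \(\phi(1-\alpha+\beta^{n+1})=1-\beta+\alpha\beta^{n+1}\in\widehat J_3\cap I_3\).

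Semicontinuity and the choice of endpoint convention cannot repair this. Any map on \(J_{\mathrm{att}}\) that agrees with \(\kappa\) on \(J_{\mathrm{dyn}}\) is discontinuous at \(1-\alpha\) in the topology inherited from \([0,1]\). Your surjectivity argument gets closedness of the image from continuity and compactness, so it fails as well.

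Consequently (ii) cannot be proved along your lines. A continuous coding is available only in one of two forms:
\begin{itemize}
\item on the space obtained by splitting the points of the backward orbits of \(1-\alpha\) and \(1-\beta\) into left and right copies, or
\item in the opposite direction, as a continuous finite-to-one surjection from the symbolic space onto \(J_{\mathrm{att}}\), like the map \(\pi\) of Appendix~\ref{app:graph-directed-details}.
\end{itemize}
It must be imported from Bruin--Troubetzkoy in one of these forms. The paper handles (ii) by citation, not proof, and its literal wording (subspace topology, with only the endpoint \(1\) adjoined) is open to the same objection.

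\textbf{A simpler route to (iii).} You can drop the delicate step you flag. Suppose \(x<y\) have equal itineraries under the half-open branch convention. Then \([S^rx,S^ry]\) stays inside a single branch at every time, so \(S^r\) translates \([x,y]\) rigidly and \(\Leb(S^r(I))\geq y-x\) for all \(r\). The closures of \(S^r(I)\) decrease to \(J_{\mathrm{att}}\), and \(\Leb(J_{\mathrm{att}})=0\): here \(J_{\mathrm{att}}\subset K_m\) with \(\Leb(K_m)=O((\alpha/s)^m)\). This is a contradiction. The argument even gives injectivity of \(\kappa\) on all of \(J_{\mathrm{dyn}}\), with no appeal to tower intervals shrinking or to recognizability.
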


\begin{proof}[Source and specialization]
Bruin--Troubetzkoy define, for every point of their dynamical attractor, the future
itinerary with respect to the three half-open branch intervals. Their
Theorem~7 identifies the restriction of the interval translation map to its
infinite-type dynamical attractor with the one-sided shift generated by the chain of
substitutions
\[
        \chi_{k_0}\circ\chi_{k_1}\circ\chi_{k_2}\circ\cdots,
\]
where
\[
        \chi_k:
        \begin{cases}
        1\mapsto2,\\
        2\mapsto31^k,\\
        3\mapsto31^{k-1}.
        \end{cases}
\]
See
\cite[Theorem~7, p.~130]{BruinTroubetzkoy2003}.

For the parameter considered here, the renormalization orbit is the fixed
point lying in the region \(U_2\). Hence
\[
        k_r=2
        \qquad
        \text{for every }r\geq0,
\]
and the substitution chain is stationary:
\[
        \chi_2\circ\chi_2\circ\cdots.
\]
Thus the shift space \(\Sigma_{\alpha,\beta}\) of
\cite{BruinTroubetzkoy2003} is precisely the stationary substitution
subshift \(X_\chi\) used here.

The proof of their Theorem~7 describes the inducing process explicitly.
If a point has itinerary \(u\) for the first-return map at inducing depth
\(n\), then its itinerary at depth \(n-1\) is obtained by applying
\(\chi_{k_{n-1}}\). Iterating this relation produces the complete future
branch itinerary. The same proof states that this itinerary construction
extends to a continuous coding of the compactified attractor and that the
coding is one-to-one except possibly on
\[
        \bigcup_{r\geq0}
        S^{-r}\{1-\alpha,1-\beta\};
\]
see
\cite[proof of Theorem~7, p.~135]{BruinTroubetzkoy2003}.

Bruin--Troubetzkoy define the value of the map at the added endpoint \(1\)
by its left limit. In this paper, \(1\) is retained only in the compact
attractor \(J_{\mathrm{att}}\), whereas the actual dynamical space is
\[
        J_{\mathrm{dyn}}=J_{\mathrm{att}}\cap[0,1).
\]
Restricting their compactified coding to \(J_{\mathrm{dyn}}\) therefore
gives the map \(\kappa\) above. It is Borel, and the identity
\[
        \kappa(Sx)=\sigma(\kappa(x))
\]
follows directly from the definition of the future itinerary.

Finally, each point has at most three preimages under \(S\). Hence
\(S^{-r}\{1-\alpha,1-\beta\}\) is finite for every \(r\), and therefore
\(E_0\) is countable. It is Borel because it is a countable union of finite
sets.
\end{proof}

We next justify that the interval system on
\[
        J_{\mathrm{dyn}}=J_{\mathrm{att}}\cap I
\]
has a unique invariant probability, including the possibility of atomic invariant measures.

We first record two elementary observations. If a measurable map \(F\) preserves a
probability measure \(\lambda\) and \(\lambda\) has an atom at \(x\), then \(x\) is periodic.
Indeed,
\[
        \lambda(\{F(x)\})\geq\lambda(\{x\}),
\]
so the masses along the forward orbit are nondecreasing. An infinite forward orbit would
contain infinitely many disjoint atoms of mass at least \(\lambda(\{x\})\), which is
impossible. Thus \(x\) is eventually periodic. If \(x\) entered a periodic cycle without
already belonging to it, invariance of the singleton at the entry point would count both its
predecessor on the cycle and a positive-mass predecessor outside the cycle, again a
contradiction. Hence every atom lies on a periodic orbit.

The substitution subshift \(X_\chi\) contains no periodic sequence. Indeed, a periodic
sequence would support an invariant probability whose frequency of the letter \(1\) is
rational. Unique ergodicity would force this measure to be the substitution measure, whose
frequency of \(1\) is \(\alpha\). Since \(\alpha\) is irrational by the rational-root theorem
applied to \(x^3-x^2-2x+1\), this is impossible.

Let
\[
        \kappa:J_{\mathrm{dyn}}\longrightarrow X_\chi
\]
be the future-itinerary coding of
Proposition~\ref{lem:BT-symbolic-coding}, and put
\[
        E_0
        =
        J_{\mathrm{dyn}}
        \cap
        \bigcup_{r\geq0}
        S^{-r}\{1-\alpha,1-\beta\}.
\]
By that lemma,
\[
        \kappa\circ S=\sigma\circ\kappa,
\]
and
\[
        \kappa|_{J_{\mathrm{dyn}}\setminus E_0}
\]
is one-to-one.

Put
\[
        Z:=J_{\mathrm{dyn}}\setminus E_0,
        \qquad
        \kappa_Z:=\kappa|_Z.
\]
The map
\[
        \kappa_Z:Z\longrightarrow X_\chi
\]
is a Borel injection between standard Borel spaces. By the
Lusin--Souslin theorem \cite{Kechris1995}, \(\kappa_Z(Z)\) is Borel and
\[
        \kappa_Z^{-1}:\kappa_Z(Z)\longrightarrow Z
\]
is Borel.

If \(x\in J_{\mathrm{dyn}}\) were periodic of period \(p\), then
\[
        \sigma^p(\kappa(x))
        =
        \kappa(S^p x)
        =
        \kappa(x).
\]
Thus \(\kappa(x)\) would be a periodic point of \(X_\chi\), contrary to the
absence of periodic sequences in the primitive substitution subshift.
Therefore \(S|_{J_{\mathrm{dyn}}}\) has no periodic points.

We now construct an invariant probability on \(J_{\mathrm{dyn}}\), rather
than merely proving that such a probability would be unique. Let
\[
        W:=\kappa_Z(Z)\subset X_\chi.
\]
The set \(W\) is Borel by the Lusin--Souslin theorem. The compactified
coding in Proposition~\ref{lem:BT-symbolic-coding} is onto \(X_\chi\), and
\[
        J_{\mathrm{att}}\setminus Z\subset E_0\cup\{1\}.
\]
Consequently,
\[
        X_\chi\setminus W
        \subset
        \kappa(E_0\cup\{1\}),
\]
which is countable. The substitution measure \(\nu_\chi\) is non-atomic:
indeed, an atom of an invariant probability for the one-sided shift must
lie on a periodic orbit, whereas \(X_\chi\) has no periodic sequence by
the argument above. Hence
\[
        \nu_\chi(W)=1.
\]

Choose a point \(x_*\in J_{\mathrm{dyn}}\) and define
\[
        g:X_\chi\longrightarrow J_{\mathrm{dyn}}
\]
by
\[
        g(\omega)
        :=
        \begin{cases}
        \kappa_Z^{-1}(\omega),&\omega\in W,\\
        x_*,&\omega\notin W.
        \end{cases}
\]
The map \(g\) is Borel. Moreover,
\[
        \kappa\circ g
        =
        \operatorname{id}_{X_\chi}
        \qquad
        \nu_\chi\text{-almost everywhere}.
\tag{8.5}
\label{eq:BT-kappa-section}
\]

We next verify that \(g\) intertwines the symbolic and interval dynamics
almost everywhere. Since each point has at most three preimages under
\(S\), the set \(S^{-1}(E_0)\) is countable. Therefore
\[
        N
        :=
        (X_\chi\setminus W)
        \cup
        \sigma^{-1}(X_\chi\setminus W)
        \cup
        \kappa\bigl(S^{-1}(E_0)\bigr)
\]
is \(\nu_\chi\)-null. If \(\omega\notin N\) and
\(x=g(\omega)\), then \(x\in Z\), \(Sx\notin E_0\), and hence
\(Sx\in Z\). Using \(\kappa\circ S=\sigma\circ\kappa\), we obtain
\[
        \kappa_Z(Sx)
        =
        \sigma(\kappa_Z(x))
        =
        \sigma\omega.
\]
Because \(\sigma\omega\in W\), application of \(\kappa_Z^{-1}\) gives
\[
        g(\sigma\omega)=S(g(\omega))
        \qquad
        \text{for }\nu_\chi\text{-almost every }\omega\in X_\chi.
\tag{8.6}
\label{eq:BT-section-intertwining}
\]

Define
\[
        \mu:=g_*\nu_\chi.
\]
Then \(\mu\) is an \(S\)-invariant Borel probability on
\(J_{\mathrm{dyn}}\). Indeed, for every Borel set
\(B\subset J_{\mathrm{dyn}}\), equation
\eqref{eq:BT-section-intertwining} and the \(\sigma\)-invariance of
\(\nu_\chi\) give
\[
\begin{aligned}
\mu(S^{-1}B)
&=
\nu_\chi\bigl(\{\omega:S(g(\omega))\in B\}\bigr)\\
&=
\nu_\chi\bigl(\{\omega:g(\sigma\omega)\in B\}\bigr)\\
&=
\nu_\chi\bigl(\sigma^{-1}(g^{-1}(B))\bigr)\\
&=
\nu_\chi(g^{-1}(B))\\
&=
\mu(B).
\end{aligned}
\]
Equation \eqref{eq:BT-kappa-section} also yields
\[
        \kappa_*\mu=\nu_\chi.
\tag{8.7}
\label{eq:BT-mu-symbolic-pushforward}
\]
Thus existence of an invariant probability on the interval system has
now been established explicitly.

Let \(\lambda\) be any \(S\)-invariant probability on
\(J_{\mathrm{dyn}}\). Since \(S\) has no periodic points, the preceding
atomic-measure observation implies that \(\lambda\) is non-atomic. Because
\(E_0\) is countable,
\[
        \lambda(E_0)=0.
\]

The push-forward
\[
        \nu:=\kappa_*\lambda
\]
is \(\sigma\)-invariant:
\[
\begin{aligned}
\nu(\sigma^{-1}A)
&=
\lambda\bigl(\kappa^{-1}(\sigma^{-1}A)\bigr)\\
&=
\lambda\bigl(S^{-1}(\kappa^{-1}(A))\bigr)\\
&=
\lambda(\kappa^{-1}(A))\\
&=
\nu(A)
\end{aligned}
\]
for every Borel \(A\subset X_\chi\). Since \(X_\chi\) is uniquely
ergodic,
\[
        \nu=\nu_\chi.
\]

Now let \(B\subset Z\) be Borel. By the Lusin--Souslin theorem,
\(\kappa(B)\) is Borel. Since \(\kappa\) is injective on \(Z\),
\[
        \kappa^{-1}(\kappa(B))\cap Z=B,
\]
and therefore
\[
        \kappa^{-1}(\kappa(B))\triangle B
        \subset E_0.
\]
Consequently,
\[
\begin{aligned}
\lambda(B)
&=
\lambda\bigl(\kappa^{-1}(\kappa(B))\bigr)\\
&=
(\kappa_*\lambda)(\kappa(B))\\
&=
\nu_\chi(\kappa(B)).
\end{aligned}
\]
Thus \(\nu_\chi\) uniquely determines \(\lambda\) on \(Z\). Since
\(\lambda(E_0)=0\), it uniquely determines \(\lambda\) on all of
\(J_{\mathrm{dyn}}\). The probability \(\mu=g_*\nu_\chi\) constructed
above has \(\kappa_*\mu=\nu_\chi\), so every \(S\)-invariant probability
\(\lambda\) equals \(\mu\). Hence the interval system is uniquely
ergodic. We regard \(\mu\) as a probability on the compact set \(J_{\mathrm{att}}\) by
assigning zero mass to the added endpoint \(1\).

The branch masses are the symbolic letter frequencies:
\[
\mu(I_1)=\alpha,
\qquad
\mu(I_2)=\alpha-\beta,
\qquad
\mu(I_3)=q.
\tag{8.8}
\label{eq:BT-branch-masses}
\]
For the distribution function \(h(x)=\mu([0,x])\), this gives
\[
        h(1-\alpha)=\alpha,
        \qquad
        h(1-\beta)=2\alpha-\beta.
\tag{8.9}
\label{eq:BT-branch-cuts}
\]

\subsubsection{Exact graph-directed structure and branchwise nonsingularity}

Put
\[
        \widehat J_i:=J_{\mathrm{att}}\cap\overline{I_i},
        \qquad i=1,2,3,
\]
and define
\[
        L(x)=\alpha(x-s),
        \qquad
        M(x)=\alpha x+\beta,
        \qquad
        \phi(x)=s+\alpha x.
\]

We now describe the associated directed multigraph explicitly. We use the
standard graph-directed convention that an edge \(e\) from vertex \(i\) to
vertex \(j\) carries a contraction
\[
        f_e:\overline{I_j}\longrightarrow\overline{I_i}.
\]
The edge set consists of
\[
\begin{array}{c|c}
\text{edge}&f_e\\
\hline
1\to2&L,\\
1\to2&M,\\
1\to3&L,\\
2\to1&\phi,\\
3\to2&\phi,\\
3\to3&\phi.
\end{array}
\]
Repeated formulas in this list represent distinct directed edges.

For a triple
\[
        E=(E_1,E_2,E_3)
\]
of nonempty compact subsets of the corresponding branch closures, define
\[
\begin{aligned}
        \mathcal F_1(E)
        &=
        L(E_2)\cup M(E_2)\cup L(E_3),\\
        \mathcal F_2(E)
        &=
        \phi(E_1),\\
        \mathcal F_3(E)
        &=
        \phi(E_2)\cup\phi(E_3),
\end{aligned}
\]
and put
\[
        \mathcal F(E)
        =
        \bigl(
        \mathcal F_1(E),
        \mathcal F_2(E),
        \mathcal F_3(E)
        \bigr).
\]

\begin{lemma}[Exact graph-directed realization and cylinder identification]
\label{lem:BT-exact-graph-directed}
Let
\[
        \mathcal E^{(0)}
        =
        \bigl(
        \overline{I_1},
        \overline{I_2},
        \overline{I_3}
        \bigr)
\]
and define recursively
\[
        \mathcal E^{(n+1)}
        =
        \mathcal F(\mathcal E^{(n)}).
\]
Write
\[
        \mathcal E^{(n)}
        =
        \bigl(
        E_1^{(n)},E_2^{(n)},E_3^{(n)}
        \bigr).
\]

For an admissible directed path
\[
        \gamma=e_1e_2\cdots e_n,
\]
where the terminal vertex of \(e_r\) is the initial vertex of \(e_{r+1}\),
put
\[
        f_\gamma
        :=
        f_{e_1}\circ f_{e_2}\circ\cdots\circ f_{e_n}.
\]
Let \(i(\gamma)\) and \(t(\gamma)\) denote respectively the initial and
terminal vertices of \(\gamma\).

Then, for every \(n\geq1\) and \(i\in\{1,2,3\}\),
\[
        E_i^{(n)}
        =
        \bigcup_{\substack{|\gamma|=n\\ i(\gamma)=i}}
        f_\gamma\bigl(\overline{I_{t(\gamma)}}\bigr).
\tag{8.10}
\label{eq:BT-cylinder-union}
\]
The sets appearing on the right-hand side are precisely the closed
depth-\(n\) stationary inducing cylinders, including all of their prescribed
tower levels.

Moreover,
\[
        E_i^{(n+1)}\subset E_i^{(n)}
\]
and
\[
        \widehat J_i
        =
        \bigcap_{n\geq0}E_i^{(n)}.
\tag{8.11}
\label{eq:BT-cylinder-intersection}
\]
Consequently, the triple
\[
        (\widehat J_1,\widehat J_2,\widehat J_3)
\]
is the unique nonempty compact fixed point of \(\mathcal F\), and the
following are exact equalities of compact sets:
\[
\begin{aligned}
        \widehat J_1
        &=
        L(\widehat J_2)
        \cup
        M(\widehat J_2)
        \cup
        L(\widehat J_3),\\
        \widehat J_2
        &=
        \phi(\widehat J_1),\\
        \widehat J_3
        &=
        \phi(\widehat J_2)
        \cup
        \phi(\widehat J_3).
\end{aligned}
\tag{8.12}
\label{eq:BT-exact-graph-directed}
\]
\end{lemma}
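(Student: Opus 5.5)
We will prove the lemma in four steps: the cylinder formula (8.10) by induction, nestedness, the identification of the intersection with \(\widehat J_i\), and finally the fixed-point equations (8.12) together with uniqueness.

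\textbf{Step 1: the cylinder formula.} We argue by induction on \(n\), using only the definition of \(\mathcal F\). Each component \(\mathcal F_i(E)\) is the union, over edges \(e\) with initial vertex \(i\), of \(f_e(E_{t(e)})\). Assume (8.10) holds at depth \(n\). Substituting it into \(\mathcal F_i(\mathcal E^{(n)})\) expresses \(E_i^{(n+1)}\) as the union of \(f_e\circ f_\gamma(\overline{I_{t(\gamma)}})\) over all admissible concatenations \(e\gamma\) starting at \(i\). This is exactly (8.10) at depth \(n+1\). The base case \(n=1\) is the definition itself.

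We then check directly that each \(f_e\) maps \(\overline{I_j}\) into \(\overline{I_i}\). For example:
\begin{itemize}
\item \(\phi(\overline{I_1})=[1-\alpha,1-\beta]=\overline{I_2}\), using \(\alpha^2=\beta\);
\item \(L(\overline{I_2})\), \(M(\overline{I_2})\) and \(L(\overline{I_3})\) lie in \([0,s]\), using the cubic relation \(\alpha^3=\alpha^2+2\alpha-1\).
\end{itemize}

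To identify these images with the closed stationary inducing cylinders, we use the return words (8.2) and the conjugacy \(\phi\). A depth-one tower level is the image of a \(\phi\)-cylinder under the partial orbit \(S^j\). For instance, \(D_2\) has itinerary \(311\), so it passes through \(I_3\) first and then lands in \(I_1\) twice. Composing \(\phi\) with the branch translations gives exactly \(L\) and \(M\) on the \(I_1\)-levels, and \(\phi\) on the \(I_2\)- and \(I_3\)-levels. This bookkeeping of which tower level is which affine map is the main obstacle, and we will check it edge by edge against the six edges in the table.

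\textbf{Step 2: nestedness.} Since \(\mathcal E^{(1)}\subset\mathcal E^{(0)}\) componentwise (Step 1) and \(\mathcal F\) is monotone under componentwise inclusion, induction gives \(E_i^{(n+1)}\subset E_i^{(n)}\).

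\textbf{Step 3: identification of the intersection.} We prove (8.11) via the renormalization description. We expect to show that
\[
\overline{S^{m}(I)}\cap\overline{I_i}
\]
is squeezed between depth-\(n\) cylinder unions. More precisely, the union of all depth-\(n\) tower levels equals the closure of the forward image \(S^{m_n}(I)\), restricted to \(\overline{I_i}\), for a suitable return time \(m_n\to\infty\). This holds because every point eventually enters \(\Delta\) and follows the towers; compare the Bruin--Troubetzkoy description in Proposition~\ref{lem:BT-symbolic-coding}. Intersecting over \(n\) then gives \(\widehat J_i\).

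\textbf{Step 4: fixed-point equations and uniqueness.} The equalities (8.12) follow by passing to the limit in \(\mathcal E^{(n+1)}=\mathcal F(\mathcal E^{(n)})\). This is legitimate because continuous maps commute with decreasing intersections of compact sets, and finite unions commute with such intersections.

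For uniqueness, every edge map is a contraction with ratio \(\alpha<1\). Hence \(\mathcal F\) is a contraction on triples of nonempty compact sets with the Hausdorff metric (taking the maximum over components), and Banach's fixed point theorem applies. Every vertex has an outgoing edge, so nonemptiness is preserved.
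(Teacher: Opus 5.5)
Your Steps 1, 2 and 4 are essentially the paper's argument: induction on the definition of $\mathcal F$, the depth-one tower computation, monotonicity of $\mathcal F$, and the max-Hausdorff-metric contraction. Obtaining (8.12) by commuting the decreasing intersections with the finite unions and the continuous edge maps is a clean substitute for the paper's identification of $(K_1,K_2,K_3)$ as the limit of $\mathcal E^{(n)}$.

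\textbf{The gap is in Step 3, which carries the real content of (8.11).} Your reason is that every point eventually enters $\Delta$ and then follows the towers. This only proves absorption: $S^m(I)\subset U_n$ up to finitely many points for all large $m$. Hence $J_{\mathrm{att}}\subset K_n$ and $\widehat J_i\subset\bigcap_n E_i^{(n)}$. It gives nothing for the reverse inclusion $\bigcap_n E_i^{(n)}\subset\widehat J_i$. That inclusion needs that no point of a depth-$n$ tower level is ever lost from $\overline{S^m(I)}$. This is a surjectivity statement: every such point must have arbitrarily long backward orbits inside the towers. Forward absorption cannot supply it. In the paper, this is exactly the direction where K\"onig's lemma, the surjectivity of the compactified coding $\overline\kappa$ from Proposition~\ref{lem:BT-symbolic-coding}, and the diameter bound $\alpha^n$ are used.

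\textbf{Your route can be completed without the symbolic coding, but the missing step must be proved.} Use $F_n=\phi^nS\phi^{-n}$.
\begin{enumerate}[label=\textup{(\alph*)}]
\item $S$ maps the union of the top levels of the depth-$n$ towers onto $F_n(\Delta_n)=\phi^n(S(I))\doteq\Delta_n\setminus B_n$.
\item Each non-top level is translated bijectively onto the next one.
\item By disjointness of level interiors, a point of a level $j\geq1$ has no other preimage in $U_n$.
\end{enumerate}
Induction on $k\leq\ell_n$ then gives
\[
        S^k(U_n)\doteq U_n\setminus\bigcup_{j<k}G_{n,j},
        \qquad\text{hence}\qquad
        S^{\ell_n}(U_n)\doteq U_{n+1}.
\]
The second identity uses the half-open refinement in Lemma~\ref{lem:BT-gap-towers}(ii), whose proof does not use the present lemma. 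Thus $S^{m_n}(I)\doteq U_n$ with $m_n=\ell_0+\cdots+\ell_{n-1}$. So $\overline{S^{m_n}(I)}=K_n$ and $J_{\mathrm{att}}=\bigcap_nK_n$.

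You would still need $K_n\cap\overline{I_i}=E_i^{(n)}$. This requires two further points.
\begin{enumerate}[label=\textup{(\roman*)}]
\item The cylinder identification at every depth. You check only depth one; the paper propagates it by the fixed-point renormalization.
\item The shared endpoints $s$ and $1-\beta$ lie in both adjacent sets $E^{(n)}$. For example, $s$ is fixed by the cycle $M\circ\phi$ and equals $\phi(0)$, while $1-\beta=\phi(s)$.
\end{enumerate}
With these additions your approach is valid and avoids the Bruin--Troubetzkoy surjectivity. Note that it reverses the paper's order, since Appendix~C derives $J=\bigcap_mK_m$ from this lemma.
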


\begin{proof}
The complete proof is given in Appendix~\ref{app:graph-directed-details}.
\end{proof}

The remaining graph-directed measure analysis---including the strong open set
condition, the Hausdorff-dimension calculation, the identification of the
invariant measure with normalized Hausdorff measure, the Bratteli--Vershik
intertwining, and branchwise nonsingularity---is given in
Appendix~\ref{app:graph-directed-details}.  In particular, that appendix proves
the Hausdorff-measure identity \eqref{eq:BT-Hausdorff-measure} and the
branchwise nonsingularity used below.
The same appendix also proves that the compact attractor is exactly the support of the
invariant measure constructed above. From this point onward, in agreement with Sections~2--7,
we therefore write
\[
        J:=\supp\mu=J_{\mathrm{att}},
        \qquad
        J_{\mathrm{dyn}}=J\cap I.
\]

\subsubsection{The complete renormalization gap towers}

The first image of the half-open dynamical interval is
\[
        S(I)=[0,\beta)\cup[\alpha,1),
\]
so the first complementary gap is
\[
        G_0=(\beta,\alpha).
\tag{8.13}
\label{eq:BT-first-gap}
\]
Let
\[
        D_i=\phi(I_i),
        \qquad i=1,2,3.
\]
The first-return tower satisfies
\[
I\setminus G_0
\doteq
D_1\cup D_2\cup D_3
\cup S(D_2)\cup S(D_3)\cup S^2(D_2),
\tag{8.14}
\label{eq:BT-first-tower}
\]
where \(A\doteq B\) means that \(A\triangle B\) is contained in a finite set of tower
endpoints. Indeed,
\[
\begin{aligned}
S(D_2)&=[0,\beta(1-\alpha)),&
S(D_3)&=[\beta(1-\alpha),\beta),\\
S^2(D_2)&=[\alpha,1-\alpha),&
D_1\cup D_2\cup D_3&=[1-\alpha,1).
\end{aligned}
\]

For \(m\geq0\), define
\[
        \Delta_m=\phi^m(I),
        \qquad
        \Delta_{m,i}=\phi^m(I_i),
        \qquad
        \overline{\Delta}_m=\phi^m([0,1]),
        \qquad
        \overline{\Delta}_{m,i}=\phi^m(\overline{I_i}),
\]
and put
\[
        B_m=\phi^m(G_0),
        \qquad
        \ell_m=|\chi^m(1)|.
\]
For \(i\in\{1,2,3\}\), write
\[
        \chi^m(i)
        =
        a_0a_1\cdots a_{r_m(i)-1},
        \qquad
        r_m(i)=|\chi^m(i)|.
\]
For \(0\leq j<r_m(i)\), put
\[
        d_{m,i,j}
        :=
        \sum_{q=0}^{j-1}c_{a_q},
        \qquad
        d_{m,i,0}=0,
\]
and define
\[
        \widetilde S_{m,i,j}(x)
        :=
        x+d_{m,i,j}.
\]
Thus \(\widetilde S_{m,i,j}\) is the affine branch of \(S^j\)
determined by the first \(j\) letters of the return word
\(\chi^m(i)\). It agrees with \(S^j\) on the relative interior of
\(\Delta_{m,i}\). At a tower endpoint it is understood as the
corresponding one-sided affine continuation and need not coincide with
the value selected there by the global half-open definition of \(S^j\).

For \(E\subset\Delta_m\), define the affine level-\(m\) saturation by
\[
        \operatorname{Sat}^{\mathrm{aff}}_m(E)
        :=
        \bigcup_{i=1}^{3}
        \bigcup_{0\leq j<r_m(i)}
        \widetilde S_{m,i,j}
        \bigl(E\cap\Delta_{m,i}\bigr).
\tag{8.15}
\label{eq:BT-affine-saturation}
\]
The half-open geometric tower support is
\[
        U_m
        :=
        \operatorname{Sat}^{\mathrm{aff}}_m(\Delta_m).
\tag{8.16}
\label{eq:BT-tower-support}
\]
It agrees with the corresponding union of the actual \(S\)-orbit
levels away from a finite set of tower endpoints.

For an arbitrary set \(E\subset\overline{\Delta}_m\), define the
closed-level affine saturation by
\[
        \widehat{\operatorname{Sat}}_m(E)
        :=
        \bigcup_{i=1}^{3}
        \bigcup_{0\leq j<r_m(i)}
        \widetilde S_{m,i,j}
        \bigl(E\cap\overline{\Delta}_{m,i}\bigr).
\tag{8.17}
\label{eq:BT-compact-saturation}
\]
When \(E\) is compact, each set in this finite union is compact, and
therefore \(\widehat{\operatorname{Sat}}_m(E)\) is compact. Thus the
notation applies both to compact retained sets such as \(R_m\) and to
arbitrary subsets of \(\overline{\Delta}_m\), without changing the
underlying affine branch convention.
The compact level-\(m\) tower support is
\[
        K_m
        :=
        \widehat{\operatorname{Sat}}_m(\overline{\Delta}_m)
        =
        \bigcup_{i=1}^{3}
        \bigcup_{0\leq j<r_m(i)}
        \widetilde S_{m,i,j}
        \bigl(\overline{\Delta}_{m,i}\bigr).
\tag{8.18}
\label{eq:BT-compact-tower-support}
\]
Since the union is finite and every
\(\widetilde S_{m,i,j}\) is continuous,
\[
\begin{aligned}
        \overline{U_m}^{\,[0,1]}
        &=
        \bigcup_{i=1}^{3}
        \bigcup_{0\leq j<r_m(i)}
        \overline{
        \widetilde S_{m,i,j}(\Delta_{m,i})
        }^{\,[0,1]}\\
        &=
        \bigcup_{i=1}^{3}
        \bigcup_{0\leq j<r_m(i)}
        \widetilde S_{m,i,j}
        \bigl(\overline{\Delta}_{m,i}\bigr)\\
        &=
        K_m.
\end{aligned}
\tag{8.19}
\label{eq:BT-Km-closure-Um}
\]
Here and below, every closure is the ordinary topological closure in
\([0,1]\). The prescribed affine continuations are used only to describe
the closed tower levels; they do not alter the meaning of closure.

\begin{lemma}[Renormalization gap towers]
\label{lem:BT-gap-towers}
For every \(m\geq0\), put
\[
        G_{m,j}=S^j(B_m),
        \qquad
        0\leq j<\ell_m.
\tag{8.20}
\label{eq:BT-gap-towers}
\]
Then the following statements hold.
\begin{enumerate}[label=\textup{(\roman*)}]
\item The first-return itinerary of \(\Delta_{m,i}\) to \(\Delta_m\) is
\(\chi^m(i)\), and its return time is \(r_m(i)\).

\item The retained and removed parts satisfy
\[
        U_{m+1}
        \doteq
        \operatorname{Sat}^{\mathrm{aff}}_m(\Delta_m\setminus B_m)
\]
and
\[
        U_m\setminus U_{m+1}
        \doteq
        \operatorname{Sat}^{\mathrm{aff}}_m(B_m)
        =
        \bigsqcup_{j=0}^{\ell_m-1}G_{m,j}.
\]

\item The compact tower supports satisfy the exact identity
\[
        K_m\setminus K_{m+1}
        =
        \bigsqcup_{j=0}^{\ell_m-1}G_{m,j},
\]
proved below as \eqref{eq:BT-exact-tower-refinement}.

\item The intervals \(G_{m,j}\), over all \(m\geq0\) and
\(0\leq j<\ell_m\), are pairwise disjoint connected components of
\([0,1]\setminus J\), and they exhaust all such components.
\end{enumerate}
\end{lemma}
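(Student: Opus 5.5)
The plan is to argue by induction on the renormalization depth \(m\). The base case \(m=0\) is verified by hand, and the whole structure is then transported to depth \(m\) using the self-similarity \(\phi\), which conjugates \(S\) to its first-return map on \(\Delta\).

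\textbf{Part (i).} For \(m=0\) the return words are the single letters \(i\), since \(\Delta_0=I\). For \(m=1\) the return words are \(\chi(i)\), as recorded after \eqref{eq:BT-substitution}. Assume the first return of \(\Delta_{m,i}\) to \(\Delta_m\) has itinerary \(\chi^m(i)\). Because \(\phi^m\) conjugates \(S\) to the first return on \(\Delta_m\), the set \(\Delta_{m+1,i}=\phi^m(D_i)\) returns to \(\Delta_{m+1}=\phi^m(\Delta)\) along the \(\phi^m\)-image of the depth-one return, namely with level-\(m\) letters \(\chi(i)\). Expanding each such letter into its own level-\(m\) return word gives the full itinerary \(\chi^m(\chi(i))=\chi^{m+1}(i)\). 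The return time is the length \(r_{m+1}(i)\) of this word.

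\textbf{Part (ii).} Conjugating \eqref{eq:BT-first-tower} by \(\phi^m\) gives, inside the first-return system on \(\Delta_m\),
\[
\Delta_m\setminus B_m\doteq \phi^m\bigl(D_1\cup D_2\cup D_3\cup S(D_2)\cup S(D_3)\cup S^2(D_2)\bigr),
\]
where \(S\) is replaced by the level-\(m\) first-return map. Saturating both sides with \(\operatorname{Sat}^{\mathrm{aff}}_m\) and using (i) gives \(U_{m+1}\doteq\operatorname{Sat}^{\mathrm{aff}}_m(\Delta_m\setminus B_m)\).

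Next, \(G_0=(\beta,\alpha)\) lies in \(I_1\), because \(\alpha<1-\alpha\). Hence \(B_m\subset\Delta_{m,1}\), and its tower has height \(r_m(1)=\ell_m\). The levels \(G_{m,j}\) are then the affine images \(\widetilde S_{m,1,j}(B_m)\), and they are disjoint because they are distinct levels of a Rokhlin tower for a first-return map. Subtracting the two saturations gives the second identity.

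\textbf{Part (iii).} I pass to closures using \eqref{eq:BT-Km-closure-Um}. The key point is that the removed levels are open intervals whose endpoints are limits of retained levels. So the difference of the closed unions is exactly the union of the open gaps \(G_{m,j}\), and no finite-endpoint error term survives. Concretely, I would show that each endpoint of \(B_m\) lies in \(\phi^m(\overline{D_j})\) or in \(\phi^m\) of the closure of \(S^kD_2\), and then propagate this along the affine branches.

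\textbf{Part (iv).} By Lemma~\ref{lem:BT-exact-graph-directed} and the identification \(J=J_{\mathrm{att}}=\bigcap_m K_m\) from the appendix, we have
\[
[0,1]\setminus J=\bigsqcup_m (K_m\setminus K_{m+1})\cup([0,1]\setminus K_0),
\]
and \(K_0=[0,1]\). So the \(G_{m,j}\) are pairwise disjoint and cover the complement of \(J\). Each \(G_{m,j}\) is open and disjoint from \(J\). To see it is a full component, I must check that both of its endpoints lie in \(J\). They are endpoints of levels retained at every deeper depth, hence lie in every \(K_n\). This follows by iterating (iii), since at each later stage only open intervals are removed, and these lie strictly inside closed levels by the strong open set condition.

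\textbf{Main obstacle.} I expect the main difficulty to be the endpoint bookkeeping in (iii) and (iv). One must show that no later gap \(G_{n,j}\) with \(n>m\) abuts an endpoint of \(G_{m,j'}\); otherwise two gaps would merge into a single component. I would handle this by checking at depth zero that \(\beta\) and \(\alpha\) are accumulated by \(J\) from outside \(G_0\), namely
\[
\beta=\lim_n \phi^n(1)\text{-images in } S(D_3)
\qquad\text{and}\qquad
\alpha\in\overline{S^2(D_2)}.
\]
I would then transport this property by the affine maps, which is legitimate because they are homeomorphisms of the closed levels.
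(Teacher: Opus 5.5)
\textbf{Where your proposal matches the paper, and the gap.} Your (i) and (ii) follow the paper's Appendix~C: induction through $F_m=\phi^m\circ S\circ\phi^{-m}$, and transport of \eqref{eq:BT-first-tower} by $\phi^m$. Your (iii) is essentially its Step~4. However, one load-bearing fact is left unjustified.

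Disjointness of the levels $G_{m,j}$ of the single tower over $B_m$ is fine: distinct $j$ give distinct first-hitting times of $\Delta_m$, and no injectivity is needed for that. What you also use, silently, is disjointness between levels of \emph{different} depth-$m$ towers. You need it to ``subtract the two saturations'' in (ii). You also need it to get $G_{m,j}\cap K_{m+1}=\varnothing$ in (iii), since the retained levels over $\Delta_{m,2}$ and $\Delta_{m,3}$ must not enter the gaps.

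The Rokhlin--Kakutani tower argument gives this only for injective maps, because it pulls an overlap back to the base. Here $S$ is not injective: $S(I_2)=[1-\alpha+\beta,1)\subset S(I_1)=[\alpha,1)$. Return data do not separate towers either, since $S^2(D_2)$ and $S(D_3)$ both have remaining word $1$ before returning.

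The paper proves cross-tower disjointness by induction on the refinement. The depth-one levels are explicit and have disjoint interiors. Each depth-$(m+1)$ level is the image, under the single translation $\widetilde S_{m,a_q,j}$, of one piece $F_m^q(\Delta_{m+1,k})$ of the interior-disjoint decomposition \eqref{eq:BT-base-refinement} of $\Delta_{m,a_q}$. Distinct indices $R_q+j$ give distinct (piece, level) pairs, so disjointness propagates. The strong open set condition of Appendix~B, combined with the identification of tower levels with graph cylinders in Lemma~\ref{lem:BT-exact-graph-directed}, would also work.

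There is also a small slip in (iii): the left endpoint $\phi^m(\beta)$ of $B_m$ lies in $\phi^m(\overline{S(D_3)})$, not in any of the sets you list.

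\textbf{Your route for (iv) differs from the paper's.} You obtain exhaustion from the telescoping identity $[0,1]\setminus\bigcap_m K_m=\bigsqcup_m(K_m\setminus K_{m+1})$ with $K_0=[0,1]$. The paper, although it also proves \eqref{eq:BT-J-intersection-Km}, uses only $J\subset K_{m+1}$, to get $G_{m,j}\cap J=\varnothing$. It then shows directly that the endpoints of $G_0$ lie in $J$, via the fixed points $0\in\widehat J_1$ and $1\in\widehat J_3$, and transports them. Finally it excludes further components by the length identity \eqref{eq:BT-gap-total-length}.

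Your version avoids the height recurrence and the generating-function computation. In exchange it needs the reverse inclusion $\bigcap_m K_m\subset J$, which is not available elsewhere in the paper; you should include its proof. It follows from $K_m=E_1^{(m)}\cup E_2^{(m)}\cup E_3^{(m)}$ and \eqref{eq:BT-cylinder-intersection}. The index sets $\{i:x\in E_i^{(m)}\}$ are nonempty and decreasing in $m$, so $\bigcap_m$ and $\bigcup_i$ can be interchanged.

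Once you have this, your endpoint check and your announced ``main obstacle'' are unnecessary. Pairwise disjoint open intervals whose union is exactly the open set $[0,1]\setminus J$ are automatically its components, since each component is connected and so lies in one of them. In particular, a common endpoint of two gaps lies in no $G_{n,j}$, hence lies in $J$, so no merging can occur.
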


\begin{proof}
The complete proof is given in Appendix~\ref{app:gap-tower-details}.
\end{proof}

\subsubsection{Classification of the active gaps}

Let
\[
        A_m=G_{m,\ell_m-1}
\tag{8.21}
\label{eq:BT-top-gap}
\]
be the top level of the \(m\)-th gap tower.

\begin{lemma}
\label{lem:BT-active-gaps}
The active gaps are exactly the intervals \(A_m\), \(m\geq0\).
\end{lemma}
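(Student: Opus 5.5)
The plan is to use the complete list of gaps from Lemma~\ref{lem:BT-gap-towers}(iv) and then test activity one gap at a time. Recall that a branchwise gap \(G\subset I_k^\circ\) is active exactly when \(\tau_k(G)\) meets \(J\), since \(\tau_k(G)\) is open and \(J=\supp\mu\).

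First I will check that every \(G_{m,j}\) is a genuine branchwise gap, i.e.\ lies in the interior of a single branch. By Lemma~\ref{lem:BT-gap-towers}(i), \(G_{m,j}=\widetilde S_{m,1,j}(B_m)\) is a subset of the relative interior of the \(j\)-th level of the tower over \(\Delta_{m,1}\). Here I use \(B_m\subset\Delta_{m,1}\), which holds because \(G_0=(\beta,\alpha)\subset I_1=[0,1-\alpha)\) (as \(\alpha<1/2\)) and \(B_m=\phi^m(G_0)\). The interior of each tower level lies in one branch, with branch symbol the \(j\)-th letter of \(\chi^m(1)\). The branch cuts \(1-\alpha,1-\beta\) are therefore not interior to any \(G_{m,j}\). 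So the components of \(I_k^\circ\setminus J\) are exactly the \(G_{m,j}\) contained in \(I_k\), and on each of them the branch translation agrees with the affine continuation of \(S\).

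Non-top levels, \(0\le j<\ell_m-1\). Here \(\tau_k(G_{m,j})=S(G_{m,j})=G_{m,j+1}\). This is again a gap, so it is disjoint from \(J\) and has \(\mu\)-measure zero. Hence these gaps are inactive.

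Top levels \(A_m\). The return word of \(\Delta_{m,1}\) is \(\chi^m(1)\), of length \(\ell_m\), and the first-return map to \(\Delta_m\) is conjugate to \(S\) via \(\phi^m\). Therefore
\[
        \tau_k(A_m)=S^{\ell_m}(B_m)=\phi^m\bigl(S(G_0)\bigr)=\phi^m\bigl((\alpha+\beta,2\alpha)\bigr).
\]
I will show that \(S(G_0)=(\alpha+\beta,2\alpha)\) meets \(J\). It is an open interval of length \(\alpha-\beta\). If it missed \(J\), it would lie inside a single gap \(G_{m',j'}\). By Lemma~\ref{lem:BT-gap-towers}(iv), every gap is a translate of some \(B_{m'}\) and so has length \(\alpha^{m'}(\alpha-\beta)\). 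Only \(m'=0\) allows length \(\ge\alpha-\beta\), and since \(\ell_0=1\) the only such gap is \(G_0\) itself. But \((\alpha+\beta,2\alpha)\neq G_0\), and in fact the two intervals are disjoint because \(\alpha+\beta>\alpha\). So \(S(G_0)\cap J\neq\emptyset\).

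Next I push this through \(\phi^m\), using \(\phi^m(J)\subset J\). This inclusion follows from the graph-directed identities \eqref{eq:BT-exact-graph-directed}: \(\phi(\widehat J_1)=\widehat J_2\) and \(\phi(\widehat J_2\cup\widehat J_3)\subset\widehat J_3\), so \(\phi(J)\subset J\). Hence \(\tau_k(A_m)\) meets \(J\), and \(A_m\) is active.

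The main obstacle I expect is endpoint bookkeeping. One must confirm that the affine continuation \(\widetilde S_{m,1,\ell_m}\) on the open interval \(B_m\) really coincides with the branch translation \(\tau_k\) of the top level. This is fine because the open gap avoids all tower endpoints. One must also confirm that \(\phi^m\) intertwines the first return correctly on the open set \(G_0\). Both points follow from Lemma~\ref{lem:BT-gap-towers}(i) restricted to interiors, where no endpoint convention enters.
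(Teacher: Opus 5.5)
Your proof is correct. The inactivity of the non-top levels is argued exactly as in the paper, but your proof that each $A_m$ is active takes a different route.

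The paper locates $A_m$ explicitly. From the total displacement $\alpha^{m+1}$ of the return word and its last letter, it derives $A_m=B_m+\alpha^{m+1}-c_{a_m}$, the recursion $A_{m+2}=R_i(A_m)$, and the closed forms \eqref{eq:BT-active-even}--\eqref{eq:BT-active-odd}. It then verifies, by separate polynomial inequalities for the even and odd families, that the attractor points $z_m=\phi^{m+1}(s)$ lie in $S(A_m)$.

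You instead use the conjugacy of the first return to $\Delta_m$ to get $S(A_m)=\phi^m(S(G_0))$, which reduces everything to the case $m=0$. You then obtain $S(G_0)\cap J\neq\emptyset$ from the gap lengths \eqref{eq:BT-gap-length} and the exhaustion in Lemma~\ref{lem:BT-gap-towers}(iv), rather than from a witness point.

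Your route is shorter and avoids all inequality checks. It also shows that the paper's witnesses are simply the $\phi^m$-images of the single point $\phi(s)=1-\beta\in S(G_0)$, so the odd-family inequalities are redundant. If you used this witness instead of your length argument, you would trade the reliance on (iv) for the two inequalities $1-\alpha-2\beta>0$ and $2\alpha+\beta>1$.

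What the paper's explicit computation buys is information used immediately afterwards:
\begin{enumerate}[label=\textup{(\alph*)}]
\item the positions of the $A_m$;
\item the fact that the even family lies in $I_1$ and the odd family in $I_2$;
\item the similarities $R_1,R_2$ that drive the cut-value computation for Theorem~\ref{thm:BT-explicit-Cdef}.
\end{enumerate}
Your argument proves the lemma as stated but does not supply these. On the other hand, your preliminary check that every $G_{m,j}$ lies in the interior of a single branch, so that the branchwise gaps are exactly the $G_{m,j}$, makes explicit a point the paper leaves implicit.
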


\begin{proof}
If \(j<\ell_m-1\), then
\[
        S(G_{m,j})=G_{m,j+1},
\]
which is another complementary gap. Hence \(\mu(S(G_{m,j}))=0\), so
\(G_{m,j}\) is inactive.

We determine the top gap from the total displacement of its return word. Let \(a_m\) be the
last letter of \(w_m=\chi^m(1)\). Since \(\chi(1)\) ends in \(2\), whereas
\(\chi(2)\) and \(\chi(3)\) end in \(1\),
\[
        a_m=
        \begin{cases}
        1,&m\text{ even},\\
        2,&m\text{ odd}.
        \end{cases}
\tag{8.22}
\label{eq:BT-last-letter}
\]
On \(\Delta_{m,1}=\phi^m(I_1)\), stationary inducing gives
\[
        S^{\ell_m}(\phi^m(x))
        =\phi^m(Sx)
        =\phi^m(x+\alpha)
        =\phi^m(x)+\alpha^{m+1}.
\]
Thus the total translation along \(w_m\) is \(\alpha^{m+1}\). Removing the final branch
translation yields
\[
        A_m
        =B_m+\alpha^{m+1}-c_{a_m},
        \qquad
        c_1=\alpha,
        \quad
        c_2=\beta.
\tag{8.23}
\label{eq:BT-top-gap-translation}
\]
Define
\[
        R_1(x)=\alpha+\beta x,
        \qquad
        R_2(x)=\alpha+\beta+\beta x.
\tag{8.24}
\label{eq:BT-R-maps}
\]
Since \(\phi^2(x)=1-\beta+\beta x\), formula
\eqref{eq:BT-top-gap-translation} also gives
\[
        A_{m+2}=
        \begin{cases}
        R_1(A_m),&m\text{ even},\\
        R_2(A_m),&m\text{ odd}.
        \end{cases}
\tag{8.25}
\label{eq:BT-active-recursion}
\]
Here we use
\((1-\beta)(1-\alpha)=\alpha\) and \((1-\beta)^2=\alpha+\beta\), both consequences
of \(P(\alpha)=0\). The initial intervals are
\[
        A_0=(\beta,\alpha),
        \qquad
        A_1=(\alpha+\beta,1-\alpha+\beta).
\tag{8.26}
\label{eq:BT-active-initial}
\]
Since \(R_1\) fixes \(1-\alpha\) and \(R_2\) fixes \(1-\beta\), we obtain
\[
A_{2r}
=
\left(
1-\alpha-\beta^r(1-\alpha-\beta),
\,
1-\alpha-\beta^r(1-2\alpha)
\right),
\tag{8.27}
\label{eq:BT-active-even}
\]
and
\[
A_{2r+1}
=
\left(
1-\beta-\beta^r(1-\alpha-2\beta),
\,
1-\beta-\beta^r(\alpha-2\beta)
\right).
\tag{8.28}
\label{eq:BT-active-odd}
\]

It remains to prove activity. Since \(s\in J\) and \(\phi(J)\subset J\), the points
\[
        z_{2r}=\phi^{2r+1}(s)=1-\beta^{r+1},
        \qquad
        z_{2r+1}=\phi^{2r+2}(s)=1-\alpha\beta^{r+1}
\]
belong to \(J\). For the even family, \(A_{2r}\subset I_1\), and the strict inequalities
placing \(z_{2r}\) inside \(S(A_{2r})=A_{2r}+\alpha\) reduce to
\[
        1-\alpha-2\beta=(1-2\alpha)(1+\alpha)>0,
        \qquad
        \beta-(1-2\alpha)=\alpha^3>0.
\]
For the odd family, \(A_{2r+1}\subset I_2\), and the corresponding inequalities placing
\(z_{2r+1}\) inside \(S(A_{2r+1})=A_{2r+1}+\beta\) reduce to
\[
        1-\alpha-2\beta-\alpha\beta>0,
        \qquad
        \alpha\beta-(\alpha-2\beta)=\alpha^4>0.
\]
For completeness, \(P\) is strictly decreasing on \([0,\frac12]\), while
\(P(0.44)>0>P(0.45)\); hence \(\alpha<0.45\), and
\[
        1-\alpha-2\alpha^2-\alpha^3
        =2-3\alpha-3\alpha^2>0.
\]
Thus each \(S(A_m)\) contains a point of \(J\) in its interior. Since
\(J=\supp\mu\), this open interval has positive \(\mu\)-measure. Therefore every
\(A_m\) is active, and the preceding argument shows that no other gap is active.
\end{proof}

\subsubsection{Exact distribution-coordinate cut values}

The similarities in \eqref{eq:BT-R-maps} arise directly from the graph maps:
\[
        R_1=M\circ\phi,
        \qquad
        R_2=\phi\circ M.
\tag{8.29}
\label{eq:BT-R-compositions}
\]
Indeed, \(\alpha s+\beta=\alpha\) and \(s+\alpha\beta=\alpha+\beta\). We now prove the
two branch-tail identities used in the cut computation. In the decomposition of
\(\widehat J_1\) in \eqref{eq:BT-exact-graph-directed}, the pieces
\(L(\widehat J_2)\cup L(\widehat J_3)\) lie in \([0,\beta]\), while
\[
        M(\widehat J_2)\subset[\alpha,s].
\]
Since \(\beta<\alpha\),
\[
        \widehat J_1\cap[\alpha,s]
        =M(\widehat J_2)
        =M(\phi(\widehat J_1))
        =R_1(\widehat J_1).
\tag{8.30}
\label{eq:BT-right-J1}
\]
Moreover, \(\widehat J_2=\phi(\widehat J_1)\) and
\(\phi([\alpha,s])=[s+\beta,1-\beta]\). Since \(\phi\) is increasing,
\[
\begin{aligned}
\widehat J_2\cap[s+\beta,1-\beta]
&=\phi(\widehat J_1\cap[\alpha,s])\\
&=\phi(R_1(\widehat J_1))\\
&=R_2(\phi(\widehat J_1))
=R_2(\widehat J_2).
\end{aligned}
\tag{8.31}
\label{eq:BT-right-J2}
\]

Let \(a_r\) be the right endpoint of \(A_{2r}=R_1^r(G_0)\). Since \(R_1\) is increasing
and fixes \(s\), induction from \eqref{eq:BT-right-J1} gives
\[
        \widehat J_1\cap[a_r,s]
        =R_1^{r+1}(\widehat J_1).
\tag{8.32}
\label{eq:BT-even-tail}
\]
Similarly, if \(b_r\) is the right endpoint of
\(A_{2r+1}=R_2^r(A_1)\), then \(R_2\) fixes \(1-\beta\), and
\eqref{eq:BT-right-J2} gives
\[
        \widehat J_2\cap[b_r,1-\beta]
        =R_2^{r+1}(\widehat J_2).
\tag{8.33}
\label{eq:BT-odd-tail}
\]

Both \(R_1\) and \(R_2\) have contraction ratio \(\beta=\alpha^2\). By
\eqref{eq:BT-Hausdorff-measure}, for every Borel subset \(E\) of the relevant compact branch
piece,
\[
        \mu(R_i(E))
        =\beta^d\mu(E)
        =(\alpha^d)^2\mu(E)
        =s^2\mu(E)
        =q\mu(E),
        \qquad i=1,2.
\tag{8.34}
\label{eq:BT-measure-scaling}
\]
Because \(h\) is constant across a gap, we may evaluate it at the right endpoint. Using
\eqref{eq:BT-even-tail} and \(\mu(\widehat J_1)=\alpha\),
\[
\begin{aligned}
h(A_{2r})
&=\alpha-\mu(R_1^{r+1}(\widehat J_1))\\
&=\alpha(1-q^{r+1}).
\end{aligned}
\tag{8.35}
\label{eq:BT-even-cuts}
\]
Similarly, using \eqref{eq:BT-odd-tail},
\(h(1-\beta)=2\alpha-\beta\), and
\(\mu(\widehat J_2)=\alpha-\beta\),
\[
        h(A_{2r+1})
        =2\alpha-\beta-(\alpha-\beta)q^{r+1}.
\tag{8.36}
\label{eq:BT-odd-cuts}
\]
Put
\[
        u_r=\alpha(1-q^{r+1}),
        \qquad
        v_r=2\alpha-\beta-(\alpha-\beta)q^{r+1}.
\tag{8.37}
\label{eq:BT-cut-sequences}
\]
Then
\[
        u_r\uparrow\alpha,
        \qquad
        v_r\uparrow2\alpha-\beta.
\tag{8.38}
\label{eq:BT-cut-limits}
\]

\begin{theorem}[Explicit canonical defect cut set for a self-similar infinite-type ITM]
\label{thm:BT-explicit-Cdef}
For the infinite-type map \eqref{eq:BT-map} and its unique invariant probability measure,
\[
\boxed{
\begin{aligned}
\cC_{\mathrm{def}}
={}&
\{0,1,\alpha,2\alpha-\beta\}\\
&\cup\{u_r:r\geq0\}\\
&\cup\{v_r:r\geq0\}.
\end{aligned}
}
\tag{8.39}
\label{eq:BT-Cdef}
\]
In particular, \(\cC_{\mathrm{def}}\) is closed, countably infinite and Lebesgue-null. Its
only nontrivial accumulation points are
\[
        \alpha
        \qquad\text{and}\qquad
        2\alpha-\beta.
\]
Consequently, the ordinary distribution-coordinate model is a countable IET.
\end{theorem}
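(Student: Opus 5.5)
The plan is to compute \(\cC_{\mathrm{def}}\) through Proposition~\ref{prop:defect-support-active-closure}. The appendix establishes branchwise nonsingularity on the support, via the Hausdorff-measure identity \eqref{eq:BT-Hausdorff-measure}. That proposition therefore gives
\[
        \supp\rho_k=\overline{\cC_{\mathrm{gap},k}}
\]
for each \(k=1,2,3\). The problem then splits into two parts. First, we list the branch cut values \(h(t_j)\). Second, we compute the closures of the active-gap cut sets branch by branch.

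For the branch cuts, \(h(0)=0\) and \(h(1)=1\) hold because \(\mu\) is non-atomic and supported in \([0,1]\). Equation \eqref{eq:BT-branch-cuts} gives \(h(1-\alpha)=\alpha\) and \(h(1-\beta)=2\alpha-\beta\). This yields the four values \(\{0,1,\alpha,2\alpha-\beta\}\).

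For the active gaps, Lemma~\ref{lem:BT-active-gaps} shows that they are exactly the intervals \(A_m\). By \eqref{eq:BT-active-even}, \eqref{eq:BT-active-odd} and the activity inequalities in its proof, \(A_{2r}\subset I_1\) and \(A_{2r+1}\subset I_2\). No active gap lies in \(I_3\), so \(\rho_3=0\). We must also check that each \(A_m\) is a \emph{branchwise} gap, i.e.\ a full component of \(I_k^\circ\setminus J\). This holds because \(A_m\) is a component of \([0,1]\setminus J\) by Lemma~\ref{lem:BT-gap-towers}(iv), and it lies strictly inside the branch. Its endpoints lie in \(J\), since they are endpoints of a gap, so the component does not extend further. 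Moreover, no branchwise gap other than these can be active. Any component of \(I_k^\circ\setminus J\) is either a full gap or a gap truncated at a branch endpoint. In the second case it is contained in some inactive \(G_{m,j}\), so its image is null.

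The cut values then follow from \eqref{eq:BT-even-cuts} and \eqref{eq:BT-odd-cuts}:
\[
        \cC_{\mathrm{gap},1}=\{u_r:r\geq0\},
        \qquad
        \cC_{\mathrm{gap},2}=\{v_r:r\geq0\}.
\]
By \eqref{eq:BT-cut-limits} these sequences are strictly increasing with limits \(\alpha\) and \(2\alpha-\beta\), because \(0<q<1\). Hence their closures add exactly these two limit points, which are already branch cuts. Putting everything together gives \eqref{eq:BT-Cdef}.

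The remaining assertions are immediate. The set is a union of finitely many points and two convergent sequences together with their limits, so it is closed and countable, hence Lebesgue-null. The sequences are strictly monotone, so the set is infinite. Its only accumulation points are \(\alpha\) and \(2\alpha-\beta\). Part (iii) of Theorem~\ref{thm:defect-set-structure} then yields the countable IET model.

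The step needing the most care is verifying the hypotheses of Proposition~\ref{prop:defect-support-active-closure}. One hypothesis is branchwise nonsingularity, which is imported from the appendix. The other is the matching between the paper's gaps \(A_m\) and branchwise gaps in the sense of Definition~\ref{def:active-gap}, including the endpoint conventions near \(1-\alpha\) and \(1-\beta\). I would first check directly that \(\overline{A_m}\) avoids the branch endpoints, using the explicit formulas \eqref{eq:BT-active-even} and \eqref{eq:BT-active-odd}.
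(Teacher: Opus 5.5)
Your proposal is correct and follows the paper's own proof: branchwise nonsingularity from the Hausdorff-measure identity, Proposition~\ref{prop:defect-support-active-closure} to get $\supp\rho_k=\overline{\cC_{\mathrm{gap},k}}$, Lemma~\ref{lem:BT-active-gaps} with \eqref{eq:BT-even-cuts}--\eqref{eq:BT-odd-cuts} for the cut values, and Theorem~\ref{thm:defect-set-structure}(iii) for the countable-IET conclusion. Your extra check that each $A_m$ is a genuine branchwise gap is a harmless refinement that the paper leaves implicit; it is in fact automatic, because the branch endpoints $0$, $1-\alpha$, $1-\beta$, $1$ all lie in $J$, so no gap straddles a branch cut.
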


\begin{proof}
By the branchwise-nonsingularity argument above, the invariant measure is branchwise
nonsingular. Hence
Proposition~\ref{prop:defect-support-active-closure} applies. By
Lemma~\ref{lem:BT-active-gaps}, the active gaps are exactly \(A_m\). The even family lies in
\(I_1\), the odd family lies in \(I_2\), and there are no active gaps in \(I_3\). Equations
\eqref{eq:BT-even-cuts}--\eqref{eq:BT-odd-cuts} give
\[
\supp\rho_1=\{u_r:r\geq0\}\cup\{\alpha\},
\]
\[
\supp\rho_2=\{v_r:r\geq0\}\cup\{2\alpha-\beta\},
\qquad
\rho_3=0.
\]
The branch-cut values are
\[
        0,
        \quad
        \alpha,
        \quad
        2\alpha-\beta,
        \quad
        1.
\]
Here the final value \(1=h(1)\) belongs to the compact distribution-coordinate interval;
it corresponds to the null endpoint added when \(I=[0,1)\) is compactified to \([0,1]\).
This proves \eqref{eq:BT-Cdef}. The remaining assertions follow from
\eqref{eq:BT-cut-limits} and Theorem~\ref{thm:defect-set-structure}\textup{(iii)}.
\end{proof}

\begin{lemma}[Active-gap cuts are essential]\label{lem:active-gap-essential-cut}
Let \(G=(a,b)\) be an active complementary gap such that
\[
        [a,b]\subset I_k^\circ,
\]
and suppose that
\[
        c:=h(a)=h(b)\in(0,1).
\]
Then the displacement
\[
        D(u)=T(u)-u
\]
has one-sided limits along the full-measure coordinate set \(Y\), and
\[
        D(c+)-D(c-)=\mu(\tau_k(G))>0.
\]
Consequently, no neighborhood of \(c\) can carry a single translation representative of
\(T\) almost everywhere.
\end{lemma}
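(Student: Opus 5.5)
Since \([a,b]\subset I_k^\circ\), there is \(\varepsilon>0\) with \([a-\varepsilon,b+\varepsilon]\subset I_k^\circ\). For \(u\in Y\) with \(x=q(u)\), we have \(T(u)=h(x+c_k)\) whenever \(x\in I_k\). So the displacement is \(D(u)=\Delta_k(q(u))\), and the plan is to compute its one-sided limits at \(c\) from the defect-measure identity, Lemma~\ref{lem:defect-measure-identity}.

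\textbf{Step 1: points near \(c\) map into the branch.} We show that for \(u\in Y\) close enough to \(c\), the point \(q(u)\) lies in \((a-\varepsilon,a)\) when \(u<c\), and in \((b,b+\varepsilon)\) when \(u>c\). Take \(u<c\). Then \(h(q(u))=u<c=h(a)\), so \(q(u)<a\) by monotonicity. Since \(h\) is non-decreasing, it remains to have \(h(a-\varepsilon)<u\). Here \(h(a-\varepsilon)<h(a)\): otherwise \(\mu((a-\varepsilon,a))=0\), and \(a\) would not be the left endpoint of a gap of \(J\). Indeed \(a\in J\) because \(G\) is a complementary component, and \(a\) is interior to \(I_k\), so \(a>0\). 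The same argument at \(b\) handles \(u>c\).

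\textbf{Step 2: one-sided limits via Lemma~\ref{lem:defect-measure-identity}.} For \(u<u'<c\) in \(Y\), both \(q(u)\) and \(q(u')\) lie in \((a-\varepsilon,a)\subset I_k\). The identity gives
\[
D(u')-D(u)=\sigma_k((q(u),q(u'))).
\]
This is monotone in \(u'\). As \(u'\uparrow c\), \(q(u')\uparrow a\): any limit \(a'<a\) would force \(h(a')=c\) by continuity of \(h\), contradicting Step 1. By countable additivity of the finite measure \(\sigma_k\), the limit \(D(c-)\) exists and equals \(\Delta_k(a)\), using continuity of \(h\) at \(a\) and \(a+c_k\). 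Symmetrically, \(D(c+)=\Delta_k(b)\); one-sided continuity of \(h\) handles the right side.

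\textbf{Step 3: the jump.} Lemma~\ref{lem:defect-measure-identity} applied to \(a<b\) in \(I_k\) gives
\[
\Delta_k(b)-\Delta_k(a)=\sigma_k(G)=\mu(\tau_k(G))-\mu(G)=\mu(\tau_k(G)),
\]
because \(\mu(G)=0\). This is positive since \(G\) is active.

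\textbf{Step 4: no single translation near \(c\).} Suppose \(T(u)=u+d\) almost everywhere on a neighborhood of \(c\). Then \(D=d\) almost everywhere on both sides of \(c\). Since \(Y\) has full measure and the one-sided limits along \(Y\) exist, both limits would equal \(d\), contradicting the positive jump.

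The main care point is Step 2: the limit of \(D\) along \(Y\) must agree with \(\Delta_k(a)\) even though \(a\notin X\). This works because \(h\) is continuous and \(\sigma_k\) has no mass escaping at \(a\) from the left. Any mass of \(\sigma_k\) at the single point \(a\) is zero, since \(\mu\) is non-atomic and a point maps to a point.
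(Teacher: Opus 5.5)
Your proof is correct and follows essentially the same route as the paper. You show $q(u)\to a$ from the left and $q(u)\to b$ from the right (the paper gets this from the fiber identity $h^{-1}(\{c\})=[a,b]$, you from strict increase of $h$ just outside $[a,b]$), identify $D=\Delta_k\circ q$ near $c$, and obtain the jump $\sigma_k(G)=\mu(\tau_k(G))$ from the defect-measure identity. The countable-additivity detour through $\sigma_k$ in your Step 2 is unnecessary: continuity of $h$ already makes $\Delta_k$ continuous on $I_k$, which gives $D(c-)=\Delta_k(a)$ and $D(c+)=\Delta_k(b)$ directly, exactly as the paper concludes.
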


\begin{proof}
Because \(G\) is a full complementary component of \(J\) and \(\mu\) is non-atomic,
\[
        h^{-1}(\{c\})=[a,b].
\]
Indeed, if the fiber were larger, it would contain a larger open interval disjoint from the
support, contradicting maximality of the complementary component \(G\).

Since \(0<c<1\) and \(Y\) has full Lebesgue measure, \(Y\) meets every sufficiently small
interval on both sides of \(c\). Because \(q:Y\to X\) is the increasing inverse
distribution coordinate, continuity and monotonicity of \(h\) give
\[
        q(u)\longrightarrow a
        \quad(u\uparrow c,\ u\in Y),
\]
and
\[
        q(u)\longrightarrow b
        \quad(u\downarrow c,\ u\in Y).
\]
For \(u\in Y\) sufficiently close to \(c\), the point \(x=q(u)\) lies in \(I_k\), and
\[
        D(u)=h(x+c_k)-h(x)=\Delta_k(x).
\]
The function \(\Delta_k\) is continuous on a neighborhood of \([a,b]\), because
\([a,b]\subset I_k^\circ\). Hence
\[
        D(c-)=\Delta_k(a),
        \qquad
        D(c+)=\Delta_k(b).
\]
By Lemma~\ref{lem:defect-measure-identity} and the fact that \(\mu(G)=0\),
\[
\begin{aligned}
D(c+)-D(c-)
&=
\Delta_k(b)-\Delta_k(a)\\
&=
\sigma_k(G)\\
&=
\mu(\tau_k(G))-\mu(G)\\
&=
\mu(\tau_k(G))
>0.
\end{aligned}
\]

Suppose that \(T(u)=u+d\) almost everywhere on a neighborhood \(V\) of \(c\). The set on
which this equality holds has full measure in \(V\), and therefore contains sequences
approaching \(c\) from both sides through \(Y\). Along both sequences \(D(u)=d\), forcing
\[
        D(c-)=D(c+)=d,
\]
a contradiction.
\end{proof}

\begin{proposition}[The canonical model is genuinely non-finite]
\label{prop:BT-nonfinite-canonical}
The countable IET obtained in Theorem~\ref{thm:BT-explicit-Cdef} cannot be
represented by finitely many translation intervals in the ordinary distribution coordinate.
\end{proposition}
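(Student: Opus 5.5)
The plan is to argue by contradiction, using Lemma~\ref{lem:active-gap-essential-cut} at each of the infinitely many active-gap cuts. Suppose the induced map \(T\) agreed almost everywhere with a finite piecewise translation. Then there would be finitely many points \(0=w_0<w_1<\cdots<w_N=1\) such that on each \((w_{i-1},w_i)\) one has \(T(u)=u+d_i\) for almost every \(u\). By Theorem~\ref{thm:BT-explicit-Cdef}, the cut values \(u_r\), \(r\geq0\), are pairwise distinct. Indeed, \(q<1\) makes \(u_r\) strictly increasing, and similarly for \(v_r\). So infinitely many of them avoid the finite set \(\{w_i\}\). Any such value \(c=u_r\) then has a neighborhood on which \(T\) is a single translation almost everywhere. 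It therefore suffices to show that Lemma~\ref{lem:active-gap-essential-cut} applies at every \(u_r\) and every \(v_r\). The statement also asserts that every active-gap cut in the two sequences is essential, so I will check all of them.

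For the active gap \(A_{2r}\), I first verify the hypotheses of Lemma~\ref{lem:active-gap-essential-cut}.
\begin{itemize}
\item \(A_{2r}\) is a full complementary component of \(J\). This follows from Lemma~\ref{lem:BT-gap-towers}(iv), since \(A_m=G_{m,\ell_m-1}\).
\item \(A_{2r}\) is active, by Lemma~\ref{lem:BT-active-gaps}.
\item \(h(A_{2r})=u_r\in(0,1)\), by \eqref{eq:BT-even-cuts}, because \(0<\alpha(1-q^{r+1})<\alpha<1\).
\item \(\overline{A_{2r}}\subset I_1^\circ=(0,1-\alpha)\). By \eqref{eq:BT-active-even}, the left endpoint \(1-\alpha-\beta^r(1-\alpha-\beta)\) is at least \(\beta>0\), with equality at \(r=0\). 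The right endpoint \(1-\alpha-\beta^r(1-2\alpha)\) is strictly below \(1-\alpha\) because \(1-2\alpha>0\).
\end{itemize}
The odd family \(A_{2r+1}\) is handled in the same way using \eqref{eq:BT-active-odd}. Here \(\overline{A_{2r+1}}\subset(1-\alpha,1-\beta)=I_2^\circ\) requires two facts. The first is that \(\alpha-2\beta>0\), which holds because \(\alpha<1/2\). The second is that the left endpoint exceeds \(1-\alpha\), which reduces to \(\beta^r(1-\alpha-2\beta)<\alpha-\beta\). Both sides are positive; the worst case is \(r=0\), where the inequality becomes \(1-\alpha-2\beta<\alpha-\beta\), i.e. \(1<2\alpha+\beta\). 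This is true, since numerically \(2\alpha+\beta\approx1.088\), and it can also be derived exactly from \(P(\alpha)=0\).

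With the hypotheses in place, Lemma~\ref{lem:active-gap-essential-cut} gives a jump \(D(c+)-D(c-)=\mu(\tau_k(G))>0\) at each such \(c\). This contradicts the existence of a translation neighborhood around the infinitely many cuts that avoid \(\{w_i\}\). The same argument also shows that no countable representative can omit any \(u_r\) or \(v_r\) as an endpoint.

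One point needs care, and I expect it to be the main obstacle. The representation in question is "in the ordinary distribution coordinate", but a competing representative might differ from \(T\) on a null set, or might use a different identification. I will interpret representation as agreement with \(T\) almost everywhere on \([0,1]\). With this interpretation, the one-sided limits \(D(c\pm)\) are taken along the full-measure set \(Y\), which is exactly the setting of Lemma~\ref{lem:active-gap-essential-cut}, so null modifications cannot remove the jumps. The remaining work is the endpoint inequalities above, which are elementary consequences of \(P(\alpha)=0\) and \(\alpha<0.45\).
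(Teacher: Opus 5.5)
Your proposal is correct and follows essentially the same route as the paper. Both arguments apply Lemma~\ref{lem:active-gap-essential-cut} at each active-gap cut \(u_r\), \(v_r\) (which lie in \((0,1)\) and are pairwise distinct because \(0<q<1\)) and conclude that no finite cut set can contain all these essential cuts. Your explicit check that \(\overline{A_m}\subset I_k^\circ\) is correct, using \(1-2\alpha>0\), \(\alpha-2\beta=\alpha(1-2\alpha)>0\) and \(2\alpha+\beta-1=\alpha^3>0\); it fills in a hypothesis of that lemma which the paper only asserts in passing.
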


\begin{proof}
By Lemma~\ref{lem:BT-active-gaps}, every \(A_m\) is an active full gap lying in the interior
of one branch. The explicit formulas give
\[
        0<u_r<\alpha<1
\]
and
\[
        \alpha<v_r<2\alpha-\beta<1.
\]
Thus every active-gap cut lies in \((0,1)\), and
Lemma~\ref{lem:active-gap-essential-cut} shows that each
\[
        h(A_{2r})=u_r,
        \qquad
        h(A_{2r+1})=v_r
\]
is an essential change of translation. Since \(0<q<1\), each of the two sequences is
strictly increasing. Moreover,
\[
        u_r<\alpha<v_s
        \qquad(r,s\geq0),
\]
because
\[
        v_s-\alpha=(\alpha-\beta)(1-q^{s+1})>0.
\]
Thus these essential cuts are pairwise distinct.

If the canonical map admitted a finite IET representative, it would be a translation on each
component of the complement of a finite cut set. Every essential cut above would have to
belong to that finite set, which is impossible. Hence the canonical countable IET cannot be
represented by finitely many translation intervals.
\end{proof}

\begin{remark}
The Arnoux--Ornstein--Weiss theorem already supplies an abstract countable IET model. The content of Theorem~\ref{thm:BT-explicit-Cdef} is different: the ordinary
order-preserving distribution coordinate itself works, its canonical defect cut set is calculated
explicitly from the renormalization gap towers, and the resulting canonical model is proved
to require infinitely many pieces.
\end{remark}

\begin{proposition}[Full support]\label{prop:full-support}
If \(J=[0,1]\), then \(\cC_{\mathrm{gap}}=\emptyset\). If, in addition, \(\mu\) is
branchwise nonsingular on the support, then \((J,\mu,S)\) is metrically equivalent, through
the distribution coordinate, to a finite IET.
\end{proposition}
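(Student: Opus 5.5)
The plan has two steps: show that full support leaves no branchwise gaps at all, and then feed this into the finite active-gap criterion.

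For the first assertion, I will show there are no branchwise gaps. By Definition~\ref{def:active-gap}, a branchwise gap in \(I_k\) is a connected component of \(I_k^\circ\setminus J\). If \(J=[0,1]\), then \(I_k^\circ\setminus J=\emptyset\) for every \(k\), so no component exists. In particular there are no active branchwise gaps, and
\[
        \cC_{\mathrm{gap}}=\{h(G):G\text{ active}\}=\emptyset .
\]
This step is purely definitional.

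For the second assertion, I will invoke Theorem~\ref{thm:finite-active-gap}. Its hypotheses are branchwise nonsingularity, which is assumed, and finiteness of \(\cC_{\mathrm{gap}}\); the empty set is finite. The theorem therefore gives metric equivalence with a finite IET. To justify the phrase ``through the distribution coordinate,'' I will trace the proof of Theorem~\ref{thm:finite-active-gap}. There the cut set \(\cC=\{h(t_0),\ldots,h(t_n)\}\cup\cC_{\mathrm{gap}}\) reduces to the finitely many branch cut values. The finite branch-defect condition is verified for the induced map \(T=h\circ S\circ q\), and Theorem~\ref{thm:finite-defect} produces the IET as a representative of this \(T\), with the conjugacy given by \(h\) as in Theorem~\ref{thm:metric-model}.

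As an alternative route, I could combine Proposition~\ref{prop:defect-support-active-closure} with the computation above. This gives \(\supp\rho_k=\overline{\emptyset}=\emptyset\) for each \(k\), so \(\cC_{\mathrm{def}}\) consists only of the branch cut values. Theorem~\ref{thm:defect-set-structure}(ii) then yields the same conclusion.

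I expect no real obstacle. The only point needing care is the endpoint convention: gaps at \(0\) or \(1\) (of the form \([0,b)\) or \((a,1]\)) cannot occur when \(J=[0,1]\), and branch endpoints are excluded from branchwise gaps by definition. It is also worth noting, though not needed, that with full support \(h\) is strictly increasing, so \(q=h^{-1}\) on all of \([0,1]\).
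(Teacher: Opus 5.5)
Your proposal is correct and follows the paper's own proof: full support leaves no branchwise gaps, so $\cC_{\mathrm{gap}}=\emptyset$, and Theorem~\ref{thm:finite-active-gap} then gives the finite IET model in the distribution coordinate. Your alternative route through Proposition~\ref{prop:defect-support-active-closure} and Theorem~\ref{thm:defect-set-structure}(ii) is also valid, but it is not needed.
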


\begin{proof}
There are no gaps, so the active-gap cut set is empty. The conclusion follows from
Theorem~\ref{thm:finite-active-gap}.
\end{proof}

\begin{proposition}[Finite union of intervals]\label{prop:finite-union}
If \(J\) is a finite union of closed intervals, then \(\cC_{\mathrm{gap}}\) is finite. If,
in addition, \(\mu\) is branchwise nonsingular on the support, then
\((J,\mu,S)\) is metrically equivalent to a finite IET.
\end{proposition}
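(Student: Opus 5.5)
The plan is to show that only finitely many branchwise gaps exist at all, whether active or not, and then to invoke Theorem~\ref{thm:finite-active-gap} for the second assertion.

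Write
\[
        J=[p_1,q_1]\cup\cdots\cup[p_N,q_N]
\]
with \(p_1\le q_1<p_2\le\cdots<p_N\le q_N\), merging any intervals that overlap or touch. The complement \([0,1]\setminus J\) is then the union of at most \(N+1\) intervals: \([0,p_1)\) when \(p_1>0\), the intervals \((q_i,p_{i+1})\) for \(1\le i<N\), and \((q_N,1]\) when \(q_N<1\).

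Next, fix a branch \(I_k\). A branchwise gap in \(I_k\) is a connected component of
\[
        I_k^\circ\setminus J
        =
        \bigl([0,1]\setminus J\bigr)\cap(t_{k-1},t_k).
\]
Each such component is the intersection of the open interval \((t_{k-1},t_k)\) with one complementary component of \(J\). Since the intersection of two intervals is an interval, each complementary component contributes at most one branchwise gap in \(I_k\). Hence \(I_k\) contains at most \(N+1\) branchwise gaps, and summing over the \(n\) branches gives at most \(n(N+1)\) branchwise gaps in total.

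Every active-gap cut \(h(G)\) comes from one of these gaps, so
\[
        \#\cC_{\mathrm{gap}}\le n(N+1)<\infty .
\]
If \(\mu\) is in addition branchwise nonsingular on the support, then Theorem~\ref{thm:finite-active-gap} applies directly and yields a metric equivalence with a finite IET. As in Proposition~\ref{prop:full-support}, this equivalence is realized through the distribution coordinate \(h\).

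No step here is a real obstacle. The only point requiring care is that a complementary component may be split by branch endpoints, so one should not claim that gaps of \(J\) correspond one-to-one with branchwise gaps. The counting above uses only the "at most one per pair" bound, which sidesteps that issue. Degenerate components of \(J\) (single points) must still be counted as components when forming the bound \(N\); since \(\mu\) is non-atomic, an isolated point cannot belong to its support anyway.
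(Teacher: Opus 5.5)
Your proposal is correct and follows essentially the same route as the paper. Finitely many complementary components of \(J\), intersected with the finitely many open branch interiors, yield finitely many branchwise gaps and hence a finite \(\cC_{\mathrm{gap}}\), after which Theorem~\ref{thm:finite-active-gap} gives the finite IET model; your explicit bound \(n(N+1)\), together with the check that each nonempty intersection is a single branchwise gap, just spells out the counting step the paper states in one line.
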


\begin{proof}
The complement of \(J\) has finitely many connected components. Intersecting them with the
finitely many continuity branches produces only finitely many branchwise gaps, so only
finitely many active-gap cuts can occur. Apply Theorem~\ref{thm:finite-active-gap}.
\end{proof}

\begin{proposition}[Finite-type Lebesgue model]\label{prop:finite-type-lebesgue}
Let \(A\subset I=[0,1)\) be a finite union of intervals with
\[
        \Leb(A)>0.
\]
Assume that \(S(A)=A\) modulo finitely many endpoints and that \(S|_A\) is one-to-one
modulo endpoints. Define
\[
        \mu_A=\frac{\Leb|_A}{\Leb(A)}.
\]
Then \(\mu_A\) is \(S\)-invariant and branchwise nonsingular. Its support is the closure in
\([0,1]\) of the union of the positive-length interval components of \(A\), and is therefore
a finite union of closed intervals. Consequently, the distribution-coordinate model of
\((\supp\mu_A,\mu_A,S)\) is a finite IET.
\end{proposition}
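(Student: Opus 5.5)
We prove Proposition~\ref{prop:finite-type-lebesgue} in four steps: invariance of \(\mu_A\), branchwise nonsingularity, identification of the support, and then Proposition~\ref{prop:finite-union}.

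\textbf{Invariance.} We may discard degenerate components and finitely many endpoints. So take \(A\) to be a finite disjoint union of nondegenerate half-open intervals, refined so that each piece lies in a single branch \(I_k\); this changes \(A\) only on a finite set. For a Borel set \(B\), write
\[
S^{-1}(B)\cap A=\bigsqcup_k \tau_k^{-1}(B)\cap A\cap I_k .
\]
Translations preserve Lebesgue measure, so
\[
\Leb(S^{-1}B\cap A)=\sum_k\Leb\bigl(B\cap\tau_k(A\cap I_k)\bigr).
\]
Since \(S|_A\) is injective modulo endpoints, the sets \(\tau_k(A\cap I_k)\) are pairwise disjoint up to finitely many points. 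Since \(S(A)=A\) modulo endpoints, their union is \(A\) up to a finite set. Hence the sum equals \(\Leb(B\cap A)\).

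It remains to show \(\Leb(S^{-1}B\setminus A)=0\) under \(\mu_A\); this holds trivially because \(\mu_A\) gives no mass outside \(A\). We therefore get \(\mu_A(S^{-1}B)=\Leb(S^{-1}B\cap A)/\Leb(A)=\mu_A(B)\). This is the only slightly delicate step: one must remember that \(\mu_A(S^{-1}B)\) only sees \(S^{-1}B\cap A\), so points outside \(A\) mapping into \(A\) are irrelevant.

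\textbf{Branchwise nonsingularity.} Let \(N\subset \supp\mu_A\cap I_k\) with \(\mu_A(N)=0\). The support is contained in \(\overline A\), which differs from \(A\) by finitely many points. Hence \(N\) is Lebesgue-null up to finitely many points, so \(\Leb(N)=0\). Then \(\tau_k(N)\) is Lebesgue-null, so \(\mu_A(\tau_k N)\le\Leb(\tau_k N)/\Leb(A)=0\). This gives the condition of Definition~\ref{def:branch-nonsingularity}.

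\textbf{Support.} \(\mu_A\) is a positive multiple of Lebesgue measure on the union \(A'\) of the positive-length components of \(A\). Every point of \(\overline{A'}\) has neighborhoods meeting \(A'\) in positive length, so it lies in the support. Conversely, points outside \(\overline{A'}\) have a neighborhood of zero mass. Thus \(\supp\mu_A=\overline{A'}\), a finite union of closed intervals.

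\textbf{Conclusion.} \(\mu_A\) is non-atomic and invariant, its support is a finite union of closed intervals, and it is branchwise nonsingular. Proposition~\ref{prop:finite-union} (equivalently Theorem~\ref{thm:finite-active-gap}) now shows that the distribution-coordinate model is a finite IET.
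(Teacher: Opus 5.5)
Your proof is correct and follows the paper's argument essentially step for step. Invariance comes from Lebesgue transport on each $A\cap I_k$ together with injectivity and $S(A)=A$ modulo endpoints; branchwise nonsingularity comes from $\Leb(N)=0$, using that $\supp\mu_A\setminus A$ is finite; the support is identified directly; and Proposition~\ref{prop:finite-union} finishes the argument. One slip in the invariance step: the claim $\Leb(S^{-1}B\setminus A)=0$ is misstated, since what you actually use (and what is true) is $\mu_A(S^{-1}B\setminus A)=0$, as your following sentence makes clear.
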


\begin{proof}
After merging overlapping components, let \(A_+\) be the union of the positive-length
interval components of \(A\), and put
\[
        K_A:=\overline{A_+}^{\,[0,1]}.
\]
The set \(K_A\) is a finite union of closed intervals.

For each continuity branch \(I_k\), put \(A_k=A\cap I_k\). On \(A_k\), the map \(S\) is
the translation \(\tau_k\), and therefore preserves Lebesgue measure. The one-to-one
assumption implies that the sets \(S(A_k)\) are pairwise disjoint modulo null endpoint sets,
while \(S(A)=A\) modulo endpoints says that their union agrees with \(A\) modulo a null set.
Thus, for every Borel set \(B\subset[0,1]\),
\[
\begin{aligned}
\Leb(S^{-1}(B)\cap A)
&=
\sum_k\Leb(S^{-1}(B)\cap A_k)\\
&=
\sum_k\Leb(B\cap S(A_k))\\
&=
\Leb(B\cap A).
\end{aligned}
\]
Division by \(\Leb(A)>0\) gives
\[
        \mu_A(S^{-1}(B))=\mu_A(B),
\]
so \(\mu_A\) is invariant.

We next identify the support. If \(x\in K_A\), every neighborhood of \(x\) meets a
positive-length component of \(A\) in a set of positive Lebesgue measure; hence every
neighborhood has positive \(\mu_A\)-measure. If \(x\notin K_A\), some neighborhood of
\(x\) is disjoint from \(A_+\), and its intersection with \(A\) consists at most of finitely
many endpoints or degenerate components, so it has zero Lebesgue measure. Therefore
\[
        \supp\mu_A=K_A.
\]

Finally, let \(N\subset K_A\cap I_k\) be Borel and suppose that \(\mu_A(N)=0\). Then
\(\Leb(N)=0\), and translations preserve Lebesgue-null sets, so
\[
        \Leb(\tau_k(N))=0.
\]
Consequently,
\[
        \mu_A(\tau_k(N))
        =
        \frac{\Leb(\tau_k(N)\cap A)}{\Leb(A)}
        =0.
\]
Thus \(\mu_A\) is branchwise nonsingular. Since \(K_A\) is a finite union of closed
intervals, Proposition~\ref{prop:finite-union} gives a finite IET model.
\end{proof}

\begin{remark}[Equivalent descriptions of active gaps]\label{rem:active-gap-reformulations}
For a branchwise gap \(G\subset I_k^\circ\setminus J\), the following are equivalent:
\[
\begin{aligned}
&G\text{ is active},\\
&\mu(G+c_k)>0,\\
&(G+c_k)\cap J\neq\emptyset,\\
&G\cap(J-c_k)\neq\emptyset,\\
&G\cap\bigl((I_k^\circ\setminus J)\cap\tau_k^{-1}(J)\bigr)\neq\emptyset.
\end{aligned}
\]
The equivalence between the second and third conditions uses that \(G+c_k\) is open and
\(J=\supp\mu\). Consequently, finiteness of the active-gap set may be checked either by
counting gaps crossed by the shifted support \(J-c_k\), or by counting components met by the
external preimage set
\[
        (I_k^\circ\setminus J)\cap\tau_k^{-1}(J).
\]
Thus the several geometric finiteness conditions that one can formulate in these terms are
reformulations of the same active-gap criterion rather than independent hypotheses.
\end{remark}

\begin{remark}[When branchwise nonsingularity is automatic]\label{rem:nonsingularity-automatic}
For a fixed branch \(I_k\), define a measure \(\lambda_k\) on \(J\cap I_k\) by
\[
        \lambda_k(A)=\mu(\tau_k(A)).
\]
Branchwise nonsingularity on this branch is exactly
\[
        \lambda_k\ll\mu|_{J\cap I_k}.
\]
A useful sufficient condition is that Lebesgue measure be absolutely continuous with respect
to \(\mu\) on the branch domain, while \(\mu\) is absolutely continuous with respect to
Lebesgue measure on the translated region. In particular, the condition holds when \(\mu\)
is equivalent to Lebesgue measure on the relevant domain and image intervals. It also holds
for the normalized Lebesgue measure in Proposition~\ref{prop:finite-type-lebesgue}. No such
conclusion is asserted for an arbitrary invariant measure of a finite-type map. For singular
Cantor-type supports, branchwise nonsingularity remains a genuine condition.
\end{remark}

\subsection{Why direct-image invariance is not automatic}
\label{subsec:direct-image-failure}

We conclude this section with a simple example illustrating why almost-everywhere
invertibility does not by itself justify the ordinary direct-image argument in the
distribution coordinate. The example also shows concretely how an active gap is recorded by
the branch defect measure.

\begin{example}[Almost-everywhere invertibility does not imply direct-image invariance]
\label{ex:direct-image-failure}
Define
\[
S(x)=
\begin{cases}
 x, & 0\le x<\dfrac12,\\[2mm]
 x-\dfrac12, & \dfrac12\le x<1,
\end{cases}
\qquad
\mu=2\,\Leb|_{[0,1/2]}.
\]
Then $\mu$ is a non-atomic $S$-invariant probability measure and
\[
J=\supp\mu=\left[0,\frac12\right].
\]
Indeed, for every Borel set $A\subset[0,1]$,
\[
\begin{aligned}
\mu(S^{-1}A)
&=2\Leb\left(S^{-1}A\cap\left[0,\frac12\right]\right)\\
&=2\Leb\left(A\cap\left[0,\frac12\right]\right)
=\mu(A),
\end{aligned}
\]
because $S$ is the identity on the support of $\mu$. In particular, $S$ is invertible on the
full-measure invariant set $[0,\frac12)$.

Now take
\[
B=\left[\frac12,\frac34\right)
\]
in the second branch. Since $B$ lies outside the support,
\[
\mu(B)=0.
\]
However,
\[
S(B)=\left[0,\frac14\right),
\qquad
\mu(S(B))=\frac12.
\]
Thus
\[
\mu(S(B))\ne\mu(B),
\]
even though $S$ is invertible $\mu$-almost everywhere.

There is no contradiction with invariance. In fact,
\[
S^{-1}(S(B))
=
\left[0,\frac14\right)
\cup
\left[\frac12,\frac34\right),
\]
and therefore
\[
\mu(S^{-1}(S(B)))=\frac12=\mu(S(B)).
\]
The problem is that the full preimage of $S(B)$ contains a second branch of positive measure.

In the language of Section~6, the interval $G=(\frac12,1)$ is an active branchwise gap,
since
\[
\mu(S(G))=1.
\]
The distribution function is
\[
h(x)=
\begin{cases}
2x, & 0\le x\le\frac12,\\
1, & \frac12\le x\le1,
\end{cases}
\]
and the entire active gap is collapsed to the cut value $h(G)=1$. Moreover,
\[
\rho_2=\delta_1,
\qquad
\cC_{\mathrm{def}}=\{0,1\}.
\]
This example shows concretely why Corollary~\ref{cor:direct-core} applies to sets inside
the invertible core, but not to arbitrary geometric intervals in the original domain.
\end{example}

Let $a<b$ lie in one branch $I_k$. A naive argument would try to write
\[
        h(Sb)-h(Sa)=\mu([Sa,Sb])=\mu(S([a,b]))=\mu([a,b])=h(b)-h(a).
\]
The equality
\[
        \mu(S([a,b]))=\mu([a,b])
\]
is precisely the delicate one. Ordinary invariance gives equality for full preimages, but it does not by itself give equality for direct images of geometric intervals. Such an interval may contain relatively open gaps of the support, or more subtle null subsets inside the support, whose branch images carry positive measure.

The criteria above repair this step in two related ways. The active-gap criterion cuts at the collapsed values of all active gaps and assumes branchwise nonsingularity to rule out hidden singular defects. The defect-measure criterion records the whole failure of direct-image preservation in the positive measures $\sigma_k$; finiteness of the corresponding defect support is a canonical sufficient condition ensuring that the branch displacement is constant on finitely many pieces.

Thus the finite-IET conclusion is not obtained by ignoring the bad sets. It is obtained by locating the places where the distribution coordinate can change the branch displacement and by cutting at those places.

\section{Open questions}

The defect-measure formulation suggests several concrete problems.

\begin{question}
For which classes of interval translation maps and invariant non-atomic measures is the defect
cut set \(\cC_{\mathrm{def}}\) finite?
\end{question}

\begin{question}
For which classes of interval translation maps and invariant non-atomic measures is
branchwise nonsingularity on the support automatic?
\end{question}

\begin{question}
Suppose that the invariant measure is branchwise nonsingular on its support. Can the closure
of the active-gap cut set satisfy
\[
        \Leb\bigl(\overline{\cC_{\mathrm{gap}}}\bigr)>0?
\]
More strongly, can \(\overline{\cC_{\mathrm{gap}}}\) have nonempty interior?
\end{question}

\begin{question}
Suppose that
\[
        \Leb(\cC_{\mathrm{def}})>0.
\]
Can the map induced by the ordinary distribution coordinate,
\[
        T(u)=h\bigl(S(q(u))\bigr),
\]
nevertheless admit a countable-IET representation modulo Lebesgue-null sets?

More generally, can there exist a closed Lebesgue-null set
\[
        C\subsetneq\cC_{\mathrm{def}}
\]
such that the displacement
\[
        D(u)=T(u)-u
\]
is locally constant almost everywhere on every connected component of
\([0,1]\setminus C\)?
\end{question}

The second question concerns the canonical order-preserving distribution coordinate. It is
distinct from the abstract representation problem, since Corollary~\ref{cor:universal-countable-iem}
already gives a countable-IET model through a possibly non-order-preserving measurable
conjugacy. The set \(\cC_{\mathrm{def}}\) is a canonical sufficient cut set, but it is not
claimed to be minimal.

\begin{question}
In a recent preprint, Drach, Staresinic and van Strien prove that finite-type interval
translation maps contain an open dense subset of every finite-branch parameter space
\cite{DrachStaresinicVanStrien2026}. For natural invariant measures arising in other
infinite-type Cantor-attractor families, is \(\Leb(\cC_{\mathrm{def}})=0\) typical? Under
what renormalization hypotheses can the canonical defect cut set be described explicitly, as in
Theorem~\ref{thm:BT-explicit-Cdef}?
\end{question}

The last question concerns the canonical distribution coordinate for a prescribed invariant
measure, and is therefore different from the already established open-density of finite-type
maps. The weakly mixing and uniquely ergodic infinite-type classes studied in
\cite{BruinRadinger2026,SkripchenkoChernyi2026} are natural candidates for further
investigation.

\appendix

\section{Entropy, measurable branch recovery, and the automorphic core}
\label{app:entropy-automorphic-details}

This appendix contains the proofs of the auxiliary entropy, branch-recovery, and
automorphic-core lemmas used in Section~\ref{sec:ae-invertibility}. The proof of
Theorem~\ref{thm:ae-invertible} itself is kept in the main text so that the central logical
passage from polynomial coding complexity to almost-everywhere invertibility remains visible.

\subsection{Lemma~\ref{lem:entropy-identity}}\label{app:lem-entropy-identity}

\begin{proof}[Proof of Lemma~\ref{lem:entropy-identity}]
For \(r\geq0\), let
\[
        \mathcal F_r
        =
        \sigma\left(\bigvee_{j=1}^{r}S^{-j}\cP\right),
        \qquad
        a_r=H_\mu(\cP\mid\mathcal F_r),
\]
with \(\mathcal F_0\) the trivial sigma-algebra, so that \(a_0=H_\mu(\cP)\). The
sequence \((\mathcal F_r)\) is increasing and therefore \((a_r)\) is decreasing. Put
\[
        \mathcal F_\infty
        =
        \sigma\left(\bigvee_{j=1}^{\infty}S^{-j}\cP\right).
\]
Continuity of conditional entropy for a finite partition along increasing sigma-algebras
gives
\[
        a_r\longrightarrow
        a_\infty:=H_\mu(\cP\mid\mathcal F_\infty);
\]
see, for example, \cite[Section~4.3]{Walters}. This continuity is a standard consequence of
martingale convergence for the conditional probabilities of the atoms of \(\cP\).

By the chain rule for entropy,
\[
H_\mu\left(\bigvee_{j=0}^{m-1}S^{-j}\cP\right)
=
\sum_{r=0}^{m-1}
H_\mu\left(S^{-r}\cP\,\middle|\,\bigvee_{j=r+1}^{m-1}S^{-j}\cP\right).
\]
For the \(r\)-th summand, factor out the common pullback \(S^{-r}\). Since \(S\)
preserves \(\mu\), a common pullback preserves the joint distributions of the partitions,
and therefore
\[
H_\mu\left(S^{-r}\cP\,\middle|\,\bigvee_{j=r+1}^{m-1}S^{-j}\cP\right)
=
H_\mu\left(\cP\,\middle|\,\bigvee_{j=1}^{m-1-r}S^{-j}\cP\right)
=a_{m-1-r}.
\]
Consequently,
\[
        H_\mu(\cP^{(m)})=\sum_{r=0}^{m-1}a_r.
\]
After division by \(m\), the right-hand side is the Cesaro average of the convergent
sequence \((a_r)\), and hence tends to \(a_\infty\). This proves the identity.
\end{proof}

\subsection{Lemma~\ref{lem:branch-future}}\label{app:lem-branch-future}

\begin{proof}[Proof of Lemma~\ref{lem:branch-future}]
By Lemma~\ref{lem:entropy-identity} and the hypothesis,
\[
        H_\mu(\cP\mid\cP^+)=0.
\]
For a finite measurable partition, vanishing conditional entropy is equivalent to
measurability of the partition with respect to the completion of the conditioning
sigma-algebra; see, for example, \cite[Section~4.3]{Walters}. Hence each atom of \(\cP\)
agrees modulo \(\mu\) with a \(\cP^+\)-measurable set. Therefore the branch label
\(\alpha\) is \(\cP^+\)-measurable modulo \(\mu\).
\end{proof}

\subsection{Lemma~\ref{lem:branch-image}}\label{app:lem-branch-image}

\begin{proof}[Proof of Lemma~\ref{lem:branch-image}]
Let
\[
        \mathcal F=\sigma\left(\bigvee_{j=0}^{\infty}S^{-j}\cP\right).
\]
Then
\[
        S^{-1}\mathcal F=\sigma\left(\bigvee_{j=1}^{\infty}S^{-j}\cP\right)=\cP^+.
\]
By Lemma~\ref{lem:branch-future}, each atom $I_k$ of $\cP$ agrees modulo $\mu$ with a set in $S^{-1}\mathcal F$. Hence for every $k$ there exists $B_k\in\mathcal F$ such that
\[
        I_k=S^{-1}B_k \quad \text{mod }\mu.
\]
The sets $B_k$ are pairwise disjoint modulo $\mu$. Indeed, for $k\neq\ell$,
\[
        \mu(B_k\cap B_\ell)
        =\mu(S^{-1}(B_k\cap B_\ell))=0,
\]
because $S^{-1}B_k$ and $S^{-1}B_\ell$ agree modulo null sets with the disjoint
branches $I_k$ and $I_\ell$. Similarly,
\[
        \mu\left(I\setminus\bigcup_{k=1}^nB_k\right)
        =\mu\left(S^{-1}\left(I\setminus\bigcup_{k=1}^nB_k\right)\right)=0,
\]
since the branches cover $I$. After modifying the sets on these null overlaps and the
null uncovered part, we may therefore assume that the $B_k$ form a measurable partition.
Define
\[
        \beta(y)=k\quad\Longleftrightarrow\quad y\in B_k.
\]
Then, for $\mu$-almost every $x$,
\[
        x\in I_k\quad\Longleftrightarrow\quad Sx\in B_k,
\]
which is exactly $\alpha(x)=\beta(Sx)$.
\end{proof}

\subsection{Lemma~\ref{lem:automorphic-core}}\label{app:lem-automorphic-core}

\begin{proof}[Proof of Lemma~\ref{lem:automorphic-core}]
Let
\[
        G=\{z\in Z:R(Sz)=z\}\cap\{z\in Z:S(Rz)=z\}.
\]
The set \(G\) is Borel: for a standard Borel space the diagonal in \(Z\times Z\) is
Borel, and each equality set above is the inverse image of the diagonal under a measurable
map. By hypothesis, \(\mu(G)=1\).

First note that \(R\) also preserves \(\mu\). For every Borel set \(B\subset Z\),
using the \(S\)-invariance of \(\mu\) and the identity \(R\circ S=\mathrm{id}\) almost
everywhere, we have
\[
        \mu(R^{-1}B)
        =\mu(S^{-1}(R^{-1}B))
        =\mu((R\circ S)^{-1}B)
        =\mu(B).
\]
Now put
\[
        Z_0=\bigcap_{m\geq0}S^{-m}G\cap\bigcap_{m\geq0}R^{-m}G.
\]
Since both \(S\) and \(R\) preserve \(\mu\), the set \(Z_0\) is Borel and has full
measure.

Take \(z\in Z_0\). All nonnegative iterates of \(z\) under both maps lie in \(G\). In
particular, \(R(Sz)=z\) and \(S(Rz)=z\). For \(m\geq1\),
\[
        R^m(Sz)=R^{m-1}z,
        \qquad
        S^m(Rz)=S^{m-1}z,
\]
while the remaining iterates are obtained by shifting the corresponding orbit by one step.
Hence \(Sz\in Z_0\) and \(Rz\in Z_0\), so both maps send \(Z_0\) into itself. Since
\(S(Rz)=z\) and \(R(Sz)=z\) on \(Z_0\), these inclusions are equalities and the two
restrictions are inverse bijections. The restriction of \(S\) is measure preserving because
\(S\) preserves \(\mu\) and \(Z_0\) is invariant.
\end{proof}

\section{Graph-directed cylinders, Hausdorff measure, and Bratteli--Vershik invariance}
\label{app:graph-directed-details}

This appendix contains the full graph-directed and measure-theoretic details
deferred from Section~8.1.2.

\subsection{Proof of the exact graph-directed realization}

\begin{proof}[Proof of Lemma~\ref{lem:BT-exact-graph-directed}]
We first identify the graph edges with the levels of the first-return
towers.

Recall that
\[
        D_i=\phi(I_i),
        \qquad i=1,2,3,
\]
and that the return words are
\[
        1\mapsto2,
        \qquad
        2\mapsto311,
        \qquad
        3\mapsto31.
\]
All calculations below are made on branch interiors and then extended to
the corresponding closed intervals by the prescribed continuous affine
extensions.

For \(x\in I_1\), the point \(\phi(x)\) lies in \(I_2\), and the return
word has length one. Hence the closed tower over \(D_1\) consists of the
single level
\[
        \phi(\overline{I_1})
        \subset\overline{I_2}.
\]
This is the edge \(2\to1\) carrying the map \(\phi\).

For \(x\in I_2\), the point \(\phi(x)\) lies in \(I_3\). Since
\[
\begin{aligned}
        S(\phi(x))
        &=
        \phi(x)+\beta-1\\
        &=
        s+\alpha x+\beta-1\\
        &=
        \alpha(x-s)\\
        &=
        L(x),
\end{aligned}
\]
and \(L(x)\in I_1\), we also have
\[
\begin{aligned}
        S^2(\phi(x))
        &=
        L(x)+\alpha\\
        &=
        \alpha x+\beta\\
        &=
        M(x).
\end{aligned}
\]
Thus the three closed levels of the return tower over \(D_2\) are
\[
        \phi(\overline{I_2})\subset\overline{I_3},
        \qquad
        L(\overline{I_2})\subset\overline{I_1},
        \qquad
        M(\overline{I_2})\subset\overline{I_1}.
\]
These are respectively the edges
\[
        3\to2,\qquad 1\to2,\qquad 1\to2.
\]

For \(x\in I_3\), the point \(\phi(x)\) lies in \(I_3\), and the same
calculation gives
\[
        S(\phi(x))=L(x)\in I_1.
\]
Thus the two closed levels of the return tower over \(D_3\) are
\[
        \phi(\overline{I_3})\subset\overline{I_3},
        \qquad
        L(\overline{I_3})\subset\overline{I_1}.
\]
These are the edges \(3\to3\) and \(1\to3\).

The precise ranges are
\[
\begin{aligned}
        L(\overline{I_2})
        &=[0,\beta s],\\
        L(\overline{I_3})
        &=[\beta s,\beta],\\
        M(\overline{I_2})
        &=[\alpha,s],\\
        \phi(\overline{I_1})
        &=[s,1-\beta],\\
        \phi(\overline{I_2})
        &=[1-\beta,1-\alpha\beta],\\
        \phi(\overline{I_3})
        &=[1-\alpha\beta,1].
\end{aligned}
\tag{B.1}
\label{eq:BT-first-level-ranges}
\]
In particular,
\[
\begin{aligned}
        E_1^{(1)}
        &=
        [0,\beta]\cup[\alpha,s],\\
        E_2^{(1)}
        &=
        [s,1-\beta],\\
        E_3^{(1)}
        &=
        [1-\beta,1].
\end{aligned}
\]
Thus
\[
        E_i^{(1)}\subset E_i^{(0)}.
\tag{B.2}
\label{eq:BT-first-nesting}
\]

We next prove the cylinder formula
\eqref{eq:BT-cylinder-union}. For \(n=1\), it is exactly the first-return
tower calculation above.

Assume that it holds at depth \(n\). By the definition of
\(\mathcal F\),
\[
        E_i^{(n+1)}
        =
        \bigcup_{\substack{e\\i(e)=i}}
        f_e\bigl(E_{t(e)}^{(n)}\bigr).
\]
Using the induction hypothesis,
\[
\begin{aligned}
        E_i^{(n+1)}
        &=
        \bigcup_{\substack{e\\i(e)=i}}
        \;
        \bigcup_{\substack{|\gamma|=n\\
        i(\gamma)=t(e)}}
        f_e\circ f_\gamma
        \bigl(\overline{I_{t(\gamma)}}\bigr)\\
        &=
        \bigcup_{\substack{|\eta|=n+1\\i(\eta)=i}}
        f_\eta
        \bigl(\overline{I_{t(\eta)}}\bigr).
\end{aligned}
\]
This proves \eqref{eq:BT-cylinder-union} for every \(n\).

It remains to justify that these graph cylinders are exactly the stationary
inducing cylinders of the original interval map. At depth one this was
proved by the explicit tower calculation above. Because \((\alpha,\beta)\)
is a fixed point of the renormalization, the first-return map on every
renormalized base
\[
        \Delta_m=\phi^m(I)
\]
is, after rescaling by \(\phi^m\), the same three-branch map \(S\), with
the same return words
\[
        2,\qquad311,\qquad31.
\]
Consequently, refining any closed depth-\(n\) inducing cylinder consists
of applying exactly one of the six first-level edge maps listed above in
the corresponding rescaled coordinate. Appending an admissible graph edge
therefore gives exactly one closed depth-\((n+1)\) stationary inducing
cylinder. Conversely, every depth-\((n+1)\) cylinder is obtained in this
way from its unique depth-\(n\) parent, except that two adjacent cylinders
may share a common endpoint. Thus the graph cylinders in
\eqref{eq:BT-cylinder-union} are precisely the closed stationary inducing
cylinders.

Since \(\mathcal F\) is monotone with respect to inclusion,
\eqref{eq:BT-first-nesting} implies inductively that
\[
        E_i^{(n+1)}\subset E_i^{(n)}
        \qquad
        (n\geq0).
\]
Hence
\[
        K_i:=\bigcap_{n\geq0}E_i^{(n)}
\]
is a nonempty compact set.

We first record the cylinder-tree construction used for the reverse
inclusion. Let \(x\in K_i\). For every \(n\), let
\(\mathscr T_n(x)\) be the finite collection of admissible paths
\(\gamma\) of length \(n\), beginning at vertex \(i\), such that
\[
        x\in
        f_\gamma
        \bigl(\overline{I_{t(\gamma)}}\bigr).
\]
Each \(\mathscr T_n(x)\) is nonempty by
\eqref{eq:BT-cylinder-union}. Moreover, if a cylinder corresponding to a
path of length \(n+1\) contains \(x\), then the cylinder corresponding to
its length-\(n\) prefix also contains \(x\). Thus these paths form an
infinite finitely branching tree. By K\"onig's lemma, there is an infinite
admissible path
\[
        \omega=e_1e_2e_3\cdots
\]
such that \(x\) lies in every prefix cylinder. Every edge map has
contraction ratio \(\alpha\), so the diameter of the length-\(n\) prefix
cylinder is at most \(\alpha^n\). The nested prefix cylinders therefore
have a singleton intersection.

The preceding first-return calculation and the induction proving
\eqref{eq:BT-cylinder-union} show internally that the graph cylinders are
exactly the closed stationary inducing cylinders. The symbolic result of
Bruin--Troubetzkoy is used only to relate infinite admissible paths to
points of the dynamical attractor.

Let \(x\in J_{\mathrm{dyn}}\cap\overline{I_i}\). If \(x\) lies
in the interior of \(I_i\), or if the half-open convention assigns the
common branch endpoint \(x\) to \(I_i\), then
Proposition~\ref{lem:BT-symbolic-coding} shows that the future itinerary
\(\kappa(x)\) belongs to \(X_\chi\). Its inducing prefix of depth \(n\)
determines one of the closed graph cylinders comprising \(E_i^{(n)}\),
and hence
\[
        x\in E_i^{(n)}
        \qquad
        \text{for every }n\geq0.
\]

It remains only to explain the adjacent-branch interpretation at a shared
endpoint. At depth one, the explicit closed ranges in
\eqref{eq:BT-first-level-ranges} show that adjacent branch cylinders contain
their common endpoint. At every subsequent depth, the stationary refinement
replaces a closed parent cylinder by closed affine images of the first-level
cylinders. Therefore the same common endpoint remains in the closed child
cylinder on each adjacent side. Inductively, if a branch endpoint belongs
to \(J_{\mathrm{att}}\) and lies in both \(\overline{I_i}\) and
\(\overline{I_{i+1}}\), then it belongs to both \(E_i^{(n)}\) and
\(E_{i+1}^{(n)}\) for every \(n\), independently of which branch is selected
by the half-open definition of \(S\). This removes the only ambiguity caused
by shared branch endpoints.

Consequently,
\[
        J_{\mathrm{dyn}}\cap\overline{I_i}
        \subset
        \bigcap_{n\geq0}E_i^{(n)}.
\]
The same inclusion holds for the compactification endpoint \(1\), when it
belongs to \(J_{\mathrm{att}}\), by the left-continuous compactified coding and the closed
cylinder convention. Therefore
\[
        J_{\mathrm{att}}\cap\overline{I_i}
        \subset
        \bigcap_{n\geq0}E_i^{(n)}.
\]

Conversely, suppose that
\[
        x\in\bigcap_{n\geq0}E_i^{(n)}.
\]
The finitely branching cylinder-tree argument above produces an infinite
admissible graph path whose nested closed cylinders have intersection
\(\{x\}\). The corresponding symbolic sequence belongs to \(X_\chi\).
By the surjectivity part of the Bruin--Troubetzkoy symbolic identification,
in the form stated in Proposition~\ref{lem:BT-symbolic-coding}, this sequence is
the itinerary of at least one point \(y\) of the compactified attractor.
Both \(x\) and \(y\) belong to every prefix cylinder. Since the diameters of
these cylinders tend to zero,
\[
        x=y.
\]
Hence
\[
        x\in J_{\mathrm{att}}\cap\overline{I_i}.
\]
Therefore
\[
        K_i=J_{\mathrm{att}}\cap\overline{I_i}=\widehat J_i.
\]

Finally, on the product of triples of nonempty compact sets, equipped with
the maximum Hausdorff metric,
\[
        d_{\max}(\mathcal F(E),\mathcal F(F))
        \leq
        \alpha\,d_{\max}(E,F).
\]
Thus \(\mathcal F\) is a strict contraction and has a unique nonempty
compact fixed triple. Since
\[
        K_i=\bigcap_{n\geq0}E_i^{(n)}
\]
and \(\mathcal E^{(n+1)}=\mathcal F(\mathcal E^{(n)})\), the triple
\[
        (K_1,K_2,K_3)
        =
        (\widehat J_1,\widehat J_2,\widehat J_3)
\]
is that fixed point. Substitution into the definition of \(\mathcal F\)
gives the exact compact-set equations
\eqref{eq:BT-exact-graph-directed}.
\end{proof}

The graph-directed system satisfies the strong open set condition. Take
\[
        O_1=(0,s),\qquad
        O_2=(s,1-\beta),\qquad
        O_3=(1-\beta,1).
\]
The edge images are
\[
\begin{aligned}
L(O_2)&=(0,\beta s),&
L(O_3)&=(\beta s,\beta),&
M(O_2)&=(\alpha,s),\\
\phi(O_1)&=(s,1-\beta),&
\phi(O_2)&=(1-\beta,1-\alpha\beta),&
\phi(O_3)&=(1-\alpha\beta,1).
\end{aligned}
\]
They lie in the appropriate vertex open sets and are pairwise disjoint at each vertex.

It remains to check that every \(O_i\) meets its compact attractor piece. The admissible
cycle from vertex \(1\) through vertex \(2\) and back has contraction
\[
        L\circ\phi(x)=\beta x.
\]
Its fixed point \(0\) belongs to \(\widehat J_1\). Hence
\[
        s=\phi(0)\in\widehat J_2,
        \qquad
        \alpha=M(s)\in\widehat J_1.
\]
Moreover,
\[
        s+\beta=\phi(\alpha)\in\widehat J_2
\]
and
\[
        1-\beta+\alpha\beta=\phi^2(\alpha)\in\widehat J_3.
\]
Since
\[
        \alpha\in O_1,\qquad
        s+\beta\in O_2,\qquad
        1-\beta+\alpha\beta\in O_3,
\]
we have
\[
        O_i\cap\widehat J_i\neq\emptyset
        \qquad(i=1,2,3).
\]
Thus the open set condition is strong.

All graph maps have contraction ratio \(\alpha\), and their incidence matrix is the matrix
\(A\) in \eqref{eq:BT-matrix}. Let \(d>0\) be determined by
\[
        \alpha^d=s.
\]
Since the Perron--Frobenius eigenvalue of \(A\) is \(1/s\),
\[
        \rho(\alpha^dA)=1.
\]
The exact compact-set equations and the strong open set condition allow the
Mauldin--Williams graph-directed similarity theorem
\cite{MauldinWilliams1988} to be applied directly. It gives
\[
        \dim_H\widehat J_i=d,
        \qquad
        0<\mathcal H^d(\widehat J_i)<\infty
        \quad(i=1,2,3).
\]
The same dimension conclusion is also consistent with the published estimates of
Edgar--Golds \cite[Theorems~3.5 and~3.14]{EdgarGolds1999}.

Let
\[
        m_i=\mathcal H^d(\widehat J_i),
        \qquad
        m=(m_1,m_2,m_3)^{\mathsf T}.
\]
The pieces entering a fixed vertex have disjoint relative interiors; their intersections are
at most common endpoints, which have zero \(\mathcal H^d\)-measure because \(d>0\).
Therefore the exact graph equations imply
\[
        m=\alpha^dAm=sAm.
\]
Perron--Frobenius uniqueness identifies the normalized vector with the frequency vector
\(p\) in \eqref{eq:BT-frequency}.

We now identify the invariant probability measure with normalized
\(d\)-dimensional Hausdorff measure. This argument uses the exact graph
cylinders established above and does not require a separate invariance
statement from \cite{BruinTroubetzkoy2003}.

Put
\[
        M:=\mathcal H^d(J_{\mathrm{att}})
\]
and define
\[
        \nu
        :=
        \frac{\mathcal H^d|_{J_{\mathrm{att}}}}{M}.
\]
Since the compact pieces \(\widehat J_i\) overlap only at endpoints and
\(d>0\),
\[
        M
        =
        \sum_{i=1}^{3}\mathcal H^d(\widehat J_i).
\]
Consequently,
\[
        p_i
        =
        \frac{\mathcal H^d(\widehat J_i)}{M},
        \qquad i=1,2,3.
\tag{B.3}
\label{eq:BT-Hausdorff-vertex-masses}
\]

Let
\[
        \gamma=e_1e_2\cdots e_n
\]
be an admissible directed path in the graph of
Lemma~\ref{lem:BT-exact-graph-directed}. If \(i(\gamma)\) and
\(t(\gamma)\) denote its initial and terminal vertices, put
\[
        C_\gamma
        :=
        f_\gamma(\widehat J_{t(\gamma)})
        \subset\widehat J_{i(\gamma)}.
\]
Every edge map has similarity ratio \(\alpha\). Hence
\(f_\gamma\) has similarity ratio \(\alpha^n\), and therefore
\[
\begin{aligned}
        \nu(C_\gamma)
        &=
        \frac{
        \mathcal H^d
        \bigl(f_\gamma(\widehat J_{t(\gamma)})\bigr)
        }{M}\\
        &=
        \alpha^{nd}
        \frac{\mathcal H^d(\widehat J_{t(\gamma)})}{M}\\
        &=
        s^n p_{t(\gamma)}.
\end{aligned}
\tag{B.4}
\label{eq:BT-Hausdorff-cylinder-mass}
\]
Here we used \(\alpha^d=s\).

Let \(\Omega_A\) be the infinite path space of the stationary directed
multigraph with incidence matrix \(A\). For a finite admissible path
\(\gamma\) of length \(n\), denote its path cylinder by
\[
        [\gamma]
        :=
        \{
        \omega\in\Omega_A:
        \omega_1\cdots\omega_n=\gamma
        \}.
\]
The Perron--Frobenius probability measure \(\mathbb P\) on \(\Omega_A\)
is determined by
\[
        \mathbb P([\gamma])
        =
        s^n p_{t(\gamma)}.
\tag{B.5}
\label{eq:BT-PF-path-cylinder}
\]
These values are compatible under refinement. Indeed, if \(\gamma\)
terminates at vertex \(j\), then its one-edge extensions are indexed by
the edges from \(j\) to \(k\), and
\[
\begin{aligned}
\sum_{\substack{e\\i(e)=j}}
\mathbb P([\gamma e])
&=
\sum_{k=1}^{3}A_{jk}s^{n+1}p_k\\
&=
s^{n+1}(Ap)_j\\
&=
s^{n+1}\frac{p_j}{s}\\
&=
s^np_j\\
&=
\mathbb P([\gamma]).
\end{aligned}
\]
Thus \eqref{eq:BT-PF-path-cylinder} defines a probability measure on
the infinite path space. For the substitution ordering specified in
\eqref{eq:BT-substitution-edge-order} below, it is the unique invariant
probability measure of the stationary Bratteli--Vershik system associated
with \(\chi\); see
\cite[Theorem~3.8 and Remark~3.9]{BezuglyiKwiatkowskiMedynetsSolomyak2010}.
The correspondence with the invariant probability of the substitution
system is given by
\cite[Theorem~5.4]{BezuglyiKwiatkowskiMedynetsSolomyak2010}.

For
\[
        \omega=e_1e_2e_3\cdots\in\Omega_A,
\]
define
\[
        \pi(\omega)
        :=
        \bigcap_{n\geq1}
        f_{e_1}\circ\cdots\circ f_{e_n}
        \bigl(
        \widehat J_{t(e_n)}
        \bigr).
\]
Since every prefix cylinder has diameter at most \(\alpha^n\), the
intersection consists of a single point. Thus
\[
        \pi:\Omega_A\longrightarrow J_{\mathrm{att}}
\]
is well-defined and continuous.

By Lemma~\ref{lem:BT-exact-graph-directed}, the sets
\(\pi([\gamma])\) are exactly the closed graph cylinders \(C_\gamma\).
The only possible failure of injectivity occurs when a point is a common
endpoint of two adjacent cylinders. Let \(E_{\mathrm{cyl}}\) denote the
union of all common endpoints of cylinders of all depths. Because there
are only finitely many cylinders at each depth, \(E_{\mathrm{cyl}}\) is
countable.

The measure \(\mathbb P\) is non-atomic. Indeed, for every
\(\omega\in\Omega_A\),
\[
\begin{aligned}
        \mathbb P(\{\omega\})
        &\leq
        \mathbb P
        \bigl(
        [\omega_1\cdots\omega_n]
        \bigr)\\
        &=
        s^n p_{t(\omega_1\cdots\omega_n)}
        \leq
        s^n\max_i p_i
        \longrightarrow0.
\end{aligned}
\]
Similarly, \(\nu\) is non-atomic because \(d>0\). Moreover, the relative
interiors of the depth-\(n\) cylinders are pairwise disjoint inside each
of the three branch closures. Hence there is a uniform finite bound on
the number of depth-\(n\) cylinders containing any fixed endpoint. It
follows that, for every \(x\in E_{\mathrm{cyl}}\), the set
\(\pi^{-1}(\{x\})\) is covered by a uniformly bounded number of
length-\(n\) path cylinders, and therefore
\[
        \mathbb P\bigl(\pi^{-1}(\{x\})\bigr)
        \leq
        C s^n\max_i p_i
        \longrightarrow0
\]
for a constant \(C\) independent of \(n\). Since
\(E_{\mathrm{cyl}}\) is countable,
\[
        \mathbb P\bigl(\pi^{-1}(E_{\mathrm{cyl}})\bigr)=0,
        \qquad
        \nu(E_{\mathrm{cyl}})=0.
\tag{B.6}
\label{eq:BT-endpoint-null}
\]

For every graph cylinder \(C_\gamma\), equations
\eqref{eq:BT-Hausdorff-cylinder-mass} and
\eqref{eq:BT-PF-path-cylinder} give
\[
        \nu(C_\gamma)
        =
        \mathbb P([\gamma]).
\tag{B.7}
\label{eq:BT-measure-cylinder-equality}
\]
To avoid ambiguity at common endpoints, remove
\(E_{\mathrm{cyl}}\) and assign every remaining point to its unique
depth-\(n\) cylinder. This gives, for each \(n\), a finite Borel
partition \(\mathcal Q_n\) of \(J_{\mathrm{att}}\setminus E_{\mathrm{cyl}}\). The
partitions are nested, and the diameter of every atom of \(\mathcal Q_n\)
is at most \(\alpha^n\). Consequently,
\[
        \sigma\left(
        \bigcup_{n\geq1}\mathcal Q_n
        \right)
        =
        \mathcal B(J_{\mathrm{att}}\setminus E_{\mathrm{cyl}}).
\]
Equation \eqref{eq:BT-measure-cylinder-equality} shows that
\(\pi_*\mathbb P\) and \(\nu\) agree on every atom of every
\(\mathcal Q_n\). By the monotone-class theorem, they agree on all Borel
subsets of \(J_{\mathrm{att}}\setminus E_{\mathrm{cyl}}\). Both measures assign zero
mass to \(E_{\mathrm{cyl}}\), so
\[
        \pi_*\mathbb P=\nu
        \qquad\text{on }J_{\mathrm{att}}.
\tag{B.8}
\label{eq:BT-Hausdorff-path-pushforward}
\]

\smallskip
\noindent
\textbf{Bratteli--Vershik intertwining and invariance.}
We now verify explicitly that \(\pi_*\mathbb P\) is invariant under the
original interval translation map.

Order the edges having the same terminal vertex according to the order
in which the corresponding letters occur in the return words
\[
        \chi(1)=2,
        \qquad
        \chi(2)=311,
        \qquad
        \chi(3)=31.
\]
Thus the substitution order is
\[
\begin{array}{c|c}
\text{terminal vertex}&\text{ordered incoming edges}\\
\hline
1&2\longrightarrow1\;(\phi)\\
2&3\longrightarrow2\;(\phi)
  \;<\;
  1\longrightarrow2\;(L)
  \;<\;
  1\longrightarrow2\;(M)\\
3&3\longrightarrow3\;(\phi)
  \;<\;
  1\longrightarrow3\;(L).
\end{array}
\tag{B.9}
\label{eq:BT-substitution-edge-order}
\]
Let \(\Omega_{\max}\) and \(\Omega_{\min}\) be the sets of paths all of
whose edges are respectively maximal or minimal in this order. These
sets are finite. The associated Vershik successor is the Borel
bijection
\[
        V:
        \Omega_A\setminus\Omega_{\max}
        \longrightarrow
        \Omega_A\setminus\Omega_{\min}
\]
obtained by replacing the first non-maximal edge by its successor and
resetting the preceding finite prefix to the corresponding minimal
path. The measure \(\mathbb P\) is invariant under this successor map;
equivalently, it is invariant under the tail equivalence relation of
the stationary ordered Bratteli diagram
\cite[Theorem~3.8 and Remark~3.9]{BezuglyiKwiatkowskiMedynetsSolomyak2010}.
Since \(\mathbb P\) is non-atomic, both extremal path sets have
\(\mathbb P\)-measure zero.

We next remove the geometric points at which a half-open branch choice
or a cylinder boundary can create an ambiguity. Put
\[
        H=\{0,1-\alpha,1-\beta\}
\]
and define the countable set
\[
\begin{aligned}
        D_{\mathrm{geom}}
        :=\;&\{1\}
        \cup
        \bigcup_{r\geq0}
        S^r\bigl((E_{\mathrm{cyl}}\cup H)\cap J_{\mathrm{dyn}}\bigr)\\
        &\cup
        \bigcup_{r\geq0}
        S^{-r}\bigl((E_{\mathrm{cyl}}\cup H)\cap J_{\mathrm{dyn}}\bigr),
\end{aligned}
\tag{B.10}
\label{eq:BT-geometric-exceptional-set}
\]
where \(S^{-r}\) denotes the full inverse image. The set is countable:
\(E_{\mathrm{cyl}}\cup H\) is countable, and every point has only
finitely many preimages under each iterate of a finite-branch interval
translation map. Since \(\nu\) is non-atomic, equation
\eqref{eq:BT-Hausdorff-path-pushforward} gives
\[
        \mathbb P\bigl(\pi^{-1}(D_{\mathrm{geom}})\bigr)
        =
        \nu(D_{\mathrm{geom}})
        =0.
\tag{B.11}
\label{eq:BT-geometric-exceptional-null}
\]

We claim that
\[
        \pi\circ V
        =
        S\circ\pi
        \qquad
        \mathbb P\text{-almost everywhere}.
\tag{B.12}
\label{eq:BT-Vershik-intertwining}
\]
Indeed, the depth-one edge order in
\eqref{eq:BT-substitution-edge-order} is exactly the temporal order of
the levels in the three first-return towers with return words
\(2,311,31\). The cylinder identification in
Lemma~\ref{lem:BT-exact-graph-directed} then implies inductively that,
at every depth \(n\), the lexicographically ordered paths ending at a
fixed depth-\(n\) base enumerate the corresponding closed tower levels
in their actual \(S\)-orbit order. Consequently, if a finite path is
not the top level of its depth-\(n\) tower, \(S\) maps the relative
interior of its geometric cylinder by translation onto the relative
interior of the successor cylinder.

Now take
\[
        \omega\in
        \Omega_A\setminus
        \bigl(
        \Omega_{\max}
        \cup
        \pi^{-1}(D_{\mathrm{geom}})
        \bigr).
\]
Let \(r\) be the first level at which the edge of \(\omega\) is not
maximal. The definition of \(V\) performs the usual carry at level
\(r\). For every \(n\geq r\), the preceding tower-order observation
therefore gives
\[
        S\bigl(\pi(\omega)\bigr)
        \in
        C_{(V\omega)|n},
\]
where \(C_{(V\omega)|n}\) is the geometric cylinder determined by the
first \(n\) edges of \(V\omega\). The exclusion of
\(D_{\mathrm{geom}}\) guarantees that no branch or cylinder endpoint is
encountered, so the actual half-open map \(S\), rather than merely a
one-sided affine continuation, is used in this relation. The cylinders
\(C_{(V\omega)|n}\) are nested and have diameter at most \(\alpha^n\).
Their intersection is therefore the singleton \(\{\pi(V\omega)\}\),
which proves
\eqref{eq:BT-Vershik-intertwining}.

Finally, let \(B\subset J_{\mathrm{dyn}}\) be Borel. Using
\eqref{eq:BT-Vershik-intertwining}, the \(V\)-invariance of
\(\mathbb P\), and the nullity of the exceptional sets, we obtain
\[
\begin{aligned}
(\pi_*\mathbb P)(S^{-1}B)
&=
\mathbb P\bigl(\pi^{-1}(S^{-1}B)\bigr)\\
&=
\mathbb P\bigl((\pi\circ V)^{-1}(B)\bigr)\\
&=
\mathbb P\bigl(V^{-1}(\pi^{-1}(B))\bigr)\\
&=
\mathbb P\bigl(\pi^{-1}(B)\bigr)\\
&=
(\pi_*\mathbb P)(B).
\end{aligned}
\tag{B.13}
\label{eq:BT-path-pushforward-invariance}
\]
Also,
\[
        (\pi_*\mathbb P)(\{1\})
        =
        \nu(\{1\})
        =0,
\]
so \(\pi_*\mathbb P\) is an \(S\)-invariant probability on the actual
dynamical space \(J_{\mathrm{dyn}}\). By the uniqueness proved in the
preceding subsection,
\[
        \pi_*\mathbb P=\mu.
\]
Together with \eqref{eq:BT-Hausdorff-path-pushforward}, this gives
\[
        \boxed{
        \mu
        =
        \frac{\mathcal H^d|_{J_{\mathrm{att}}}}{\mathcal H^d(J_{\mathrm{att}})}.
        }
\tag{B.14}
\label{eq:BT-Hausdorff-measure}
\]
In particular, normalized Hausdorff measure is \(S\)-invariant on
\(J_{\mathrm{dyn}}\). When it is regarded as a measure on the compact
attractor \(J_{\mathrm{att}}\), the added endpoint \(1\) has zero mass. The support of
\(\mu\), viewed in the compactification, is exactly \(J_{\mathrm{att}}\).

The resulting identification with normalized Hausdorff measure is
consistent with the Hausdorff-dimension and ergodic results obtained for
this family in \cite{BruinTroubetzkoy2003}. The argument above is included
to make the invariance and the precise normalization used here explicit.

We can now prove branchwise nonsingularity without any invertibility
assumption on the symbolic shift. Let
\[
        N\subset J_{\mathrm{dyn}}\cap I_k
\]
be Borel and suppose that
\[
        \mu(N)=0.
\]
By \eqref{eq:BT-Hausdorff-measure},
\[
        \mathcal H^d(N)=0.
\]
The branch map
\[
        \tau_k(x)=x+c_k
\]
is an isometry, and therefore
\[
        \mathcal H^d(\tau_k(N))
        =
        \mathcal H^d(N)
        =
        0.
\]
Since \(\mu\) is supported on \(J_{\mathrm{att}}\),
\[
\begin{aligned}
        \mu(\tau_k(N))
        &=
        \mu(\tau_k(N)\cap J_{\mathrm{att}})\\
        &=
        \frac{
        \mathcal H^d(\tau_k(N)\cap J_{\mathrm{att}})
        }{
        \mathcal H^d(J_{\mathrm{att}})
        }\\
        &=0.
\end{aligned}
\]
Thus \(\mu\) is branchwise nonsingular on the support.

\section{Complete renormalization-tower bookkeeping}
\label{app:gap-tower-details}

This appendix contains the full proof of the renormalization gap-tower lemma,
including the exact compact refinement, the limiting tower support, the gap
endpoints, and the exhaustion argument.

\begin{proof}[Proof of the renormalization gap-tower lemma]
We separate the combinatorial refinement into explicit steps.

\smallskip
\noindent
\textbf{Step 1: return words.}
Let \(F_m\) be the first-return map to \(\Delta_m\). Stationary inducing gives
\[
        F_m=\phi^m\circ S\circ\phi^{-m}.
\tag{C.1}
\label{eq:BT-induced-conjugacy}
\]
For \(m=0\), the return itinerary of \(\Delta_{0,i}=I_i\) is the one-letter word
\(i=\chi^0(i)\). Suppose that the return itinerary of every \(\Delta_{m,a}\) is
\(\chi^m(a)\). Write
\[
        \chi(i)=a_0a_1\cdots a_{t-1}.
\]
At level \(m+1\), each \(F_m\)-step through the piece \(\Delta_{m,a_q}\) expands into the
\(S\)-word \(\chi^m(a_q)\). Hence the complete return word is
\[
        \chi^m(a_0)\chi^m(a_1)\cdots\chi^m(a_{t-1})
        =
        \chi^m(\chi(i))
        =
        \chi^{m+1}(i).
\]
Its length is \(r_{m+1}(i)=|\chi^{m+1}(i)|\). This proves \textup{(i)}.

\smallskip
\noindent
\textbf{Step 2: refinement inside the base.}
Transporting the first-level decomposition \eqref{eq:BT-first-tower} by \(\phi^m\) and using
\eqref{eq:BT-induced-conjugacy} gives
\[
\Delta_m\setminus B_m
\doteq
\bigcup_{k=1}^{3}
\bigcup_{0\leq q<|\chi(k)|}
F_m^q(\Delta_{m+1,k}).
\tag{C.2}
\label{eq:BT-base-refinement}
\]
The interiors of the sets on the right are pairwise disjoint.

Fix \(k\) and again write \(\chi(k)=a_0\cdots a_{t-1}\). Define the cumulative
\(S\)-times
\[
        R_q:=\sum_{p=0}^{q-1}r_m(a_p),
        \qquad R_0=0.
\]
On \(\Delta_{m+1,k}\),
\[
        F_m^q=S^{R_q},
        \qquad
        F_m^q(\Delta_{m+1,k})\subset\Delta_{m,a_q},
\]
away from the finite endpoint set. Therefore
\[
\operatorname{Sat}^{\mathrm{aff}}_m
\bigl(F_m^q(\Delta_{m+1,k})\bigr)
\doteq
\bigcup_{j=0}^{r_m(a_q)-1}
S^{R_q+j}(\Delta_{m+1,k}).
\]
The integer intervals
\[
        [R_q,R_q+r_m(a_q)-1],
        \qquad 0\leq q<t,
\]
are consecutive and disjoint, and their union is
\[
        \{0,1,\ldots,r_{m+1}(k)-1\},
\]
because
\[
\sum_{q=0}^{t-1}r_m(a_q)
=
|\chi^m(\chi(k))|
=
|\chi^{m+1}(k)|
=
r_{m+1}(k).
\]
Taking the union over \(q\) and then over \(k\) in
\eqref{eq:BT-base-refinement} proves
\[
        \operatorname{Sat}^{\mathrm{aff}}_m(\Delta_m\setminus B_m)
        \doteq U_{m+1}.
\tag{C.3}
\label{eq:BT-retained-saturation}
\]

\smallskip
\noindent
\textbf{Step 3: the removed subtower.}
Since \(B_m\subset\Delta_{m,1}^{\circ}\), the prescribed affine branch and the
actual iterate agree on \(B_m\). Therefore
\[
\begin{aligned}
\operatorname{Sat}^{\mathrm{aff}}_m(B_m)
&=
\bigcup_{j=0}^{r_m(1)-1}
\widetilde S_{m,1,j}(B_m)\\
&=
\bigcup_{j=0}^{r_m(1)-1}S^j(B_m)\\
&=
\bigsqcup_{j=0}^{\ell_m-1}G_{m,j}.
\end{aligned}
\]
The first-level tower has disjoint interiors. At every later stage, refinement takes place
independently inside an old tower level, where the relevant affine branch is a translation.
Induction therefore shows that all level-\(m\) tower intervals have pairwise disjoint
interiors. Together with \eqref{eq:BT-retained-saturation}, this proves \textup{(ii)}.

\smallskip
\noindent
\textbf{Step 4: exact compact refinement.}

Put
\[
        R_m
        :=
        \overline{\Delta}_m\setminus B_m.
\]
Since \(B_m\) is open, \(R_m\) is compact and contains both endpoints
of \(B_m\). The exact closed-cylinder refinement established above gives
\[
        K_{m+1}
        =
        \widehat{\operatorname{Sat}}_m(R_m).
\tag{C.4}
\label{eq:BT-compact-retained-saturation}
\]

Since
\[
        \overline{\Delta}_m
        =
        R_m\cup\overline{B_m},
\]
we obtain
\[
\begin{aligned}
        K_m
        &=
        \widehat{\operatorname{Sat}}_m
        (\overline{\Delta}_m)\\
        &=
        \widehat{\operatorname{Sat}}_m(R_m)
        \cup
        \widehat{\operatorname{Sat}}_m(\overline{B_m})\\
        &=
        K_{m+1}
        \cup
        \widehat{\operatorname{Sat}}_m(\overline{B_m}).
\end{aligned}
\]
Because
\[
        \overline{B_m}\subset\Delta_{m,1}^{\circ},
\]
the last compact saturation is
\[
\begin{aligned}
        \widehat{\operatorname{Sat}}_m(\overline{B_m})
        &=
        \bigcup_{j=0}^{r_m(1)-1}
        \widetilde S_{m,1,j}(\overline{B_m})\\
        &=
        \bigcup_{j=0}^{\ell_m-1}
        \overline{G_{m,j}}.
\end{aligned}
\]
Thus
\[
        K_m
        =
        K_{m+1}
        \cup
        \bigcup_{j=0}^{\ell_m-1}
        \overline{G_{m,j}}.
\tag{C.5}
\label{eq:BT-closed-tower-refinement}
\]

The endpoints of \(B_m\) belong to \(R_m\). Applying the prescribed
affine tower maps therefore gives
\[
        \partial G_{m,j}
        \subset K_{m+1}
        \qquad
        (0\leq j<\ell_m).
\tag{C.6}
\label{eq:BT-gap-boundaries-retained}
\]
On the other hand, refinement takes place independently inside the
pairwise interior-disjoint old tower levels. Inside the level
corresponding to \(\widetilde S_{m,1,j}\), the retained part is
\[
        \widetilde S_{m,1,j}(R_m\cap\overline{\Delta}_{m,1}),
\]
while the removed part is the open interval
\[
        G_{m,j}
        =
        \widetilde S_{m,1,j}(B_m).
\]
These sets are disjoint. Every other retained closed level can meet this
old level only at a tower endpoint. Hence
\[
        G_{m,j}\cap K_{m+1}=\varnothing.
\]
It follows that
\[
        K_m\setminus K_{m+1}
        =
        \bigsqcup_{j=0}^{\ell_m-1}G_{m,j}.
\tag{C.7}
\label{eq:BT-exact-tower-refinement}
\]
In particular,
\[
        K_{m+1}\subset K_m,
\]
and the gap towers removed at distinct refinement levels are pairwise
disjoint.

\smallskip
\noindent
\textbf{Identification of the limiting tower support.}
We now prove explicitly that
\[
        J=\bigcap_{m\geq0}K_m.
\tag{C.8}
\label{eq:BT-J-intersection-Km}
\]

Recall from Lemma~\ref{lem:BT-exact-graph-directed} that
\(E_i^{(m)}\) is the union of all closed depth-\(m\) stationary inducing
cylinders lying in the branch closure \(\overline{I_i}\), and that
\[
        \widehat J_i
        =
        J\cap\overline{I_i}
        =
        \bigcap_{m\geq0}E_i^{(m)}.
\tag{C.9}
\label{eq:BT-Ji-cylinder-intersection}
\]
The compact tower support \(K_m\) is the union of all closed
depth-\(m\) inducing cylinders, including all their tower levels.
Consequently,
\[
        K_m
        =
        E_1^{(m)}
        \cup
        E_2^{(m)}
        \cup
        E_3^{(m)}.
\tag{C.10}
\label{eq:BT-Km-branch-cylinder-union}
\]
This is an exact equality. A common branch endpoint may belong to two
adjacent sets \(E_i^{(m)}\), but this causes no ambiguity in their union.

We first prove
\[
        J\subset\bigcap_{m\geq0}K_m.
\]
Let \(x\in J\). Since
\[
        [0,1]
        =
        \overline{I_1}
        \cup
        \overline{I_2}
        \cup
        \overline{I_3},
\]
there is at least one \(i\in\{1,2,3\}\) such that
\[
        x\in J\cap\overline{I_i}
        =
        \widehat J_i.
\]
By \eqref{eq:BT-Ji-cylinder-intersection},
\[
        x\in E_i^{(m)}
        \subset K_m
\]
for every \(m\geq0\). Hence
\[
        x\in\bigcap_{m\geq0}K_m.
\]

Conversely, let
\[
        x\in\bigcap_{m\geq0}K_m.
\]
For every \(m\geq0\), define
\[
        A_m(x)
        :=
        \bigl\{
        i\in\{1,2,3\}:
        x\in E_i^{(m)}
        \bigr\}.
\]
By \eqref{eq:BT-Km-branch-cylinder-union}, the set \(A_m(x)\) is
nonempty. Moreover,
\[
        E_i^{(m+1)}\subset E_i^{(m)}
\]
for every \(i\), and therefore
\[
        A_{m+1}(x)\subset A_m(x).
\]
Thus
\[
        A_0(x)\supset A_1(x)\supset A_2(x)\supset\cdots
\]
is a decreasing sequence of nonempty subsets of the finite set
\(\{1,2,3\}\). It follows that
\[
        \bigcap_{m\geq0}A_m(x)\neq\varnothing.
\]
Choose
\[
        i\in\bigcap_{m\geq0}A_m(x).
\]
Then
\[
        x\in E_i^{(m)}
        \qquad\text{for every }m\geq0,
\]
and hence, again by \eqref{eq:BT-Ji-cylinder-intersection},
\[
        x\in
        \bigcap_{m\geq0}E_i^{(m)}
        =
        \widehat J_i
        \subset J.
\]
This proves the reverse inclusion and establishes
\eqref{eq:BT-J-intersection-Km}.

In particular,
\[
        J\subset K_{m+1}
        \qquad\text{for every }m\geq0.
\]
Combining this with the exact refinement identity
\eqref{eq:BT-exact-tower-refinement}, we obtain
\[
        G_{m,j}\cap J=\varnothing
        \qquad
        \text{for all }
        m\geq0,\quad 0\leq j<\ell_m.
\]

\smallskip
\noindent
\textbf{Step 5: gap endpoints.}
The exact graph-directed equations contain a cycle on \(\widehat J_1\) whose composition is
\(x\mapsto\beta x\), and a self-loop on \(\widehat J_3\) given by
\(x\mapsto s+\alpha x\). Their fixed points give
\[
        0\in\widehat J_1,
        \qquad
        1\in\widehat J_3.
\]
Consequently,
\[
        s=\phi(0)\in\widehat J_2,
        \qquad
        \beta=L(1)\in\widehat J_1,
        \qquad
        \alpha=M(s)\in\widehat J_1.
\]
Thus both endpoints of \(G_0=(\beta,\alpha)\) belong to \(J\). The exact graph equations
also give \(\phi(J)\subset J\), so the endpoints of \(B_m=\phi^m(G_0)\) lie in \(J\).
Following a prescribed return word applies the continuous graph-cylinder maps defining the
closed tower levels; these maps send attractor endpoints to attractor points. Hence every
endpoint of every \(G_{m,j}\) belongs to \(J\).

Since \(G_{m,j}\cap J=\emptyset\) and both endpoints lie in \(J\), each \(G_{m,j}\) is a
complete connected component of \([0,1]\setminus J\).

\smallskip
\noindent
\textbf{Step 6: exhaustion.}
Every iterate before return is a translation on \(B_m\), so
\[
        |G_{m,j}|=\alpha^m(\alpha-\beta).
\tag{C.11}
\label{eq:BT-gap-length}
\]
The tower heights satisfy
\[
        \ell_0=1,\qquad
        \ell_1=1,\qquad
        \ell_2=3,
\]
and the characteristic polynomial of \(A\) gives
\[
        \ell_{m+3}
        =
        \ell_{m+2}+2\ell_{m+1}-\ell_m.
\tag{C.12}
\label{eq:BT-height-recurrence}
\]
Therefore
\[
        \sum_{m\geq0}\ell_mz^m
        =
        \frac{1}{1-z-2z^2+z^3}.
\tag{C.13}
\label{eq:BT-height-generating}
\]
The Perron--Frobenius growth rate of \(\ell_m\) is \(1/s\), and
\(\alpha/s<1\), so the series converges at \(z=\alpha\). Using
\(P(\alpha)=0\),
\[
\begin{aligned}
\sum_{m\geq0}\sum_{j=0}^{\ell_m-1}|G_{m,j}|
&=
(\alpha-\beta)\sum_{m\geq0}\ell_m\alpha^m\\
&=
\frac{\alpha-\beta}{1-\alpha-2\alpha^2+\alpha^3}\\
&=
1.
\end{aligned}
\tag{C.14}
\label{eq:BT-gap-total-length}
\]
The displayed intervals are already pairwise disjoint connected components. Any additional
component of \([0,1]\setminus J\) would be a nonempty relatively open interval and would have
positive length, contradicting \eqref{eq:BT-gap-total-length}. Thus the displayed gap towers
exhaust all complementary components, proving \textup{(iv)}.
\end{proof}

\end{document}